\definecolor{newblue} {RGB} {0,100,200}
\pgfplotsset{compat=1.15}
\def\section{\@startsection{section}{1}%
\z@{.7\linespacing\@plus\linespacing}{.5\linespacing}%
{\large \scshape \centering}}
\tikzset{
  dotted/.style={pattern=dots,pattern color=#1},
  dotted/.default=black
}
\tikzset{
  fdotted/.style={pattern=crosshatch dots,pattern color=#1},
  fdotted/.default=black
}
\tikzset{
  scopedlines/.style={pattern=north east lines,pattern color=#1},
  scopedlines/.default=black
}
\tikzset{
  hrlines/.style={pattern=horizontal lines,pattern color=#1},
  hrlines/.default=black
}
\newcommand*{\DashedArrow}[1][]{\mathbin{\tikz [baseline=-0.25ex,-latex, dashed,#1] \draw [#1] (0pt,0.5ex) -- (1.3em,0.5ex);}}
\newcommand{\norm}[1]{\left\lVert#1\right\rVert}
\newenvironment{proof1}{\paragraph{\bf{Proof  of Theorem \ref{mainthm1}, item 2:}}}{\hfill$\blacksquare$ \medskip}
\newenvironment{proof2}{\paragraph{\bf{Proof  of Theorem \ref{mainthm1}, item 1:}}}{\hfill$\blacksquare$ \medskip}
\newenvironment{proof3}{\paragraph{\bf{Proof  of Theorem \ref{mainthm3}:}}}{\hfill$\blacksquare$ \medskip}
\let\expandafter\oldproof\csname\string\proof\endcsname
\let\oldendproof\endproof
\renewenvironment{proof}[1][\proofname]{%
  \oldproof[\normalfont \bf #1:]%
}{\oldendproof}
\theoremstyle{plain}
\newtheorem{thm}{Theorem}[section]
\theoremstyle{definition}
\newtheorem*{plan}{\normalfont \bf Organization of the paper}
\theoremstyle{plain}
\newtheorem{lem}[thm]{Lemma}
\theoremstyle{definition}
\newtheorem{defn}[thm]{Definition}
\theoremstyle{definition}
\newtheorem{rmk}[thm]{Remark}
\theoremstyle{definition}
\theoremstyle{plain}
\newtheorem{cor}[thm]{Corollary}
\theoremstyle{plain}
\newtheorem{prop}[thm]{Proposition}
\theoremstyle{definition}
\newtheorem{ex}[thm]{Example}
\theoremstyle{definition}
\theoremstyle{definition}
\newtheorem*{histnote}{Historical note}
\theoremstyle{definition}
\newtheorem*{ack}{Acknowledgements}
\title[Boucksom-Zariski and Weyl chambers on IHS manifolds]{Boucksom-Zariski and Weyl chambers on Irreducible Holomorphic Symplectic Manifolds}
\author[Francesco Antonio Denisi]{ Francesco Antonio Denisi}
\address{Institut Elie Cartan de Lorraine, F-54506 Vand\oe{}uvre-l\`{e}s-Nancy Cedex, France}
\email{francesco.denisi@univ-lorraine.fr}
\begin{document}
\begin{abstract}
Inspired by the work of Bauer, K\"uronya, and Szemberg, we provide for the big cone of a projective irreducible holomorphic symplectic (IHS) manifold a decomposition into chambers (which we describe in detail), in each of which the support of the negative part of the divisorial Zariski decomposition is constant. We see how the obtained decomposition of the big cone allows us to describe the volume function. Moreover, similarly to the case of surfaces, we see that the big cone of a projective IHS manifold admits a decomposition into simple Weyl chambers, which we compare to that induced by the divisorial Zariski decomposition. To conclude, we determine the structure of the pseudo-effective cone.
\end{abstract}
\maketitle
\tableofcontents
\section{Introduction}
An irreducible holomorphic symplectic (IHS) manifold is a simply connected compact Kähler manifold carrying (up to multiplication by a scalar) a unique holomorphic 2-form, which is symplectic. They are the higher dimensional analogs of complex K3 surfaces (which are the IHS manifolds of dimension $2$) and one of the three building blocks of compact Kähler manifolds. At present, very few deformation classes of IHS manifolds are known, namely, the K3$^{[n]}$ and $\mathrm{Kum}_n$ deformation classes, for any $n \in \mathbb{N}$ (see \cite{Beau}), and the OG10 and OG6 deformation classes, discovered by O'Grady (see \cite{OGrady1} and \cite{OGrady2} respectively).  In this article, we study the big cone, the pseudo-effective cone, and an asymptotic invariant of linear series of a projective IHS manifold.

As most of this work was inspired by the article \cite{Bau} of Bauer, K\"uronya and Szemberg, we first recall the surface case. An effective $\mathbb{R}$-divisor $D$ on a smooth projective surface $S$ (over an algebraically closed field of arbitrary characteristic), can be written as 
\[
D=P(D)+N(D),
\] 
where :
\begin{enumerate}
\item $P(D)$ is nef and is orthogonal to (any irreducible component of) $N(D)$ (with respect to the usual intersection form),
\item $N(D)$ is effective, so that $N(D)=\sum_ia_iN_i$, with $a_i \geq 0$ for every $i$, and the $N_i$ are the irreducible components of $N(D)$,
\item The Gram matrix $(N_i\cdot N_j)_{i,j}$ of $N(D)$ is negative-definite, if $N(D) \neq 0$.
\end{enumerate} 
 The divisor $P(D)$ is the \textit{positive part} of $D$, while $N(D)$ is the \textit{negative part} of $D$. The decomposition $D=P(D)+N(D)$ with the listed properties is unique and is known as the \textit{Zariski decomposition} of $D$ (for more details see for example \cite{Bauer}). 

\begin{histnote}
The Zariski decomposition was first obtained for effective $\mathbb{Q}$-divisors by Zariski in his fundamental paper \cite{Zariski}.  Then it was extended by Fujita in \cite{Fujita} to the case of pseudo-effective  $\mathbb{Q}$-divisors. The existence and the uniqueness of the Zariski decomposition for effective $\mathbb{R}$-divisors can be easily deduced using the method of Bauer in \cite{Bauer}.
\end{histnote}

In \cite{Bau} the authors described the big cone of $S$ in terms of variation of the Zariski decomposition in it.

\begin{thm}[Th. Bauer, A. K\"uronya, T. Szemberg]\label{decompsurface}

 Let $S$ be a smooth projective surface.
\begin{enumerate}
\item There is a locally finite decomposition of $\mathrm{Big}(S)$ into locally rational polyhedral subcones, such that in each subcone the support of the negative part of the Zariski decomposition of the divisors is constant.
\item Let $D$ be a big $\mathbb{R}$-divisor on $S$, with Zariski decomposition $D=P(D)+N(D)$. We have
\[
\mathrm{vol}(D)=(P(D))^2=(D-N(D))^2.
\]
Therefore on a chamber $\Sigma_P$ on which the support of the negative part is constant, the volume is given by a homogeneous quadratic polynomial.

\end{enumerate}

\end{thm}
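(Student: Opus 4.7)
The plan is to parametrize the chambers by the admissible supports of the negative part of the Zariski decomposition. By uniqueness of the Zariski decomposition, for each reduced effective divisor $N = N_1 + \cdots + N_r$ whose Gram matrix $(N_i \cdot N_j)$ is negative definite, the locus
\[
\Sigma_N := \{ D \in \mathrm{Big}(S) : \mathrm{supp}\,N(D) = \{N_1, \ldots, N_r\}\}
\]
is well defined, and these loci (together with $\mathrm{Nef}(S) \cap \mathrm{Big}(S)$, corresponding to $N=0$) partition $\mathrm{Big}(S)$. Statement (1) therefore reduces to proving that each $\Sigma_N$ is locally a rational polyhedral cone and that the collection $\{\Sigma_N\}$ is locally finite in $\mathrm{Big}(S)$.

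To describe $\Sigma_N$ explicitly, I would fix the support $\{N_1, \ldots, N_r\}$ and exploit the orthogonality condition $P(D) \cdot N_i = 0$ built into the Zariski decomposition. This reads $\sum_j a_j (N_i \cdot N_j) = D \cdot N_i$ and, since the Gram matrix is invertible, uniquely determines each coefficient $a_i = a_i(D)$ as a linear functional of $D$. A big $\mathbb{R}$-divisor $D$ then lies in $\Sigma_N$ if and only if all $a_i(D) > 0$ and the residual divisor $P(D) := D - \sum_i a_i(D) N_i$ is nef with strictly positive intersection against every irreducible curve outside $\{N_1, \ldots, N_r\}$. The first batch of conditions is a finite system of open linear inequalities; the second is a priori an infinite family of closed linear conditions $P(D) \cdot C \geq 0$ indexed by irreducible curves on $S$.

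The main obstacle is establishing local finiteness: around any $D_0 \in \mathrm{Big}(S)$ one must reduce the above family of curve-conditions to finitely many active constraints. Curves potentially appearing in the support of $N(D')$ for $D'$ near $D_0$ are necessarily negative (the diagonals of a negative-definite matrix are negative). Using bigness to write $D_0 = A + E$ with $A$ ample and $E$ effective, a careful perturbation argument then confines the support of $N(D')$, for $D'$ in a sufficiently small neighbourhood of $D_0$, to a fixed finite set of negative curves extracted from components of $E$ together with the finitely many curves contracted by $P(D_0)$. Combined with the linear characterization of the $a_i(D)$, this shows that $\Sigma_N$ is locally cut out by finitely many linear inequalities, yielding both local rational polyhedrality and local finiteness.

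For part (2), the key input is Fujita's theorem, giving $\mathrm{vol}(D) = P(D)^2$ for every big $\mathbb{R}$-divisor $D$ on a smooth projective surface. Since on $\Sigma_N$ the positive part $P(D) = D - \sum_i a_i(D) N_i$ is a linear function of $D$, the volume $\mathrm{vol}(D) = (D - N(D))^2$ is automatically a homogeneous quadratic polynomial in $D$ on that chamber; part (2) then follows immediately from part (1).
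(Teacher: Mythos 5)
Theorem \ref{decompsurface} is cited from \cite{Bau} rather than proved in this paper; the paper's contribution is the IHS analogue (Theorem \ref{mainthm1}), whose proof explicitly follows the Bauer--K\"uronya--Szemberg strategy. Your proposal reconstructs essentially that strategy (chambers indexed by negative-definite supports, inverting the Gram matrix to express the coefficients of $N(D)$ as linear functionals of $D$, local finiteness via an ample perturbation, Zariski--Fujita for the volume), so the overall plan is the right one.

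Two points to correct. First, your characterization ``$D\in\Sigma_N$ if and only if $a_i(D)>0$ for all $i$ and $P(D)$ is nef \emph{with strictly positive intersection against every irreducible curve outside} $\{N_1,\dots,N_r\}$'' is too restrictive: the strict positivity cuts out only the \emph{interior} of the chamber, and with that definition the sets $\Sigma_N$ would fail to partition $\mathrm{Big}(S)$. For instance, take a big and nef $P$ whose null locus strictly contains $\{N_1,\dots,N_r\}$ and set $D=P+\sum_i a_iN_i$ with all $a_i>0$; then $\mathrm{supp}\,N(D)=\{N_1,\dots,N_r\}$ but $P(D)=P$ is orthogonal to curves outside the support. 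The correct characterization is simply $a_i(D)>0$ for all $i$ together with $P(D)$ nef, and one must accept that the chambers are neither open nor closed; a big class lies on the boundary of $\Sigma_N$ exactly when $P(D)$ is orthogonal to a negative curve not in the support (this is the content of Proposition \ref{propborder} and Corollary \ref{interiorchamber} in the IHS setting). Second, your ``careful perturbation argument'' for local finiteness needs to be made precise; the clean mechanism (isolated as Remark \ref{locallyfinitelem} here) is that $\mathrm{Neg}(D+A)\subseteq\mathrm{Neg}(D)$ for $A$ ample, because $P(D)+A$ is a nef subdivisor of $D+A$ and is therefore dominated by $P(D+A)$. Taking a shifted ample cone $D'+\mathrm{Amp}(S)$ as the neighbourhood then bounds all occurring negative supports inside the finite set $\mathrm{Neg}(D')$, which, combined with control of the null locus near a big and nef class (Lemma \ref{lem1}), gives both local finiteness and local rational polyhedrality.
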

The subcones giving the decomposition in Theorem \ref{decompsurface} are called \textit{Zariski chambers} (for the definition and further details we refer the reader to \cite{Bau}).

Thanks to the work \cite{Beau} of Beauville, the cohomology group $H^2(X,\mathbb{C})$ of an IHS manifold $X$ can be endowed with a quadratic form $q_X$, known as Beauville-Bogomolov-Fujiki form. Up to a rescaling, the restriction of $q_X$ to $H^2(X,\mathbb{Z})$ is integral, and its restriction to $\mathrm{Pic}(X) \subset H^2(X,\mathbb{Z})$ shares many properties with the usual intersection form on a surface; below are those that will be mostly used:
\begin{itemize}
\item $q_X(\alpha)>0$ for any Kähler class $\alpha$ on $X$ (cf. 1.10 of \cite{Huy1}).
\item $q_X(\alpha,N)>0$ for any non-zero effective divisor $N$ on $X$ (cf. 1.11 of \cite{Huy1}).
\item $q_X$ is an \textit{intersection product}, namely $q_X(E,E')\geq 0$ for any couple of distinct and non-zero prime divisors on $X$ (cf. Proposition 4.2 of \cite{Bouck}).
\end{itemize}

Also, for an arbitrary compact complex manifold, Boucksom in \cite{Bouck} obtained a sort of "higher dimensional Zariski decomposition", called \textit{divisorial Zariski decomposition} (see also the algebraic approach of Nakayama in \cite{Nakayama}). In particular, in the case of IHS manifolds, Boucksom characterized his decomposition in terms of the Beauville-Bogomolov-Fujiki form. 
\begin{thm}[Theorem 4.8 in \cite{Bouck}]\label{thm1}
Let $D=\sum_{i=1}^ka_i D_i$ be an effective $\mathbb{R}$-divisor on a projective IHS manifold $X$, i.e. $a_i\geq 0$ and $D_i$ is a prime divisor, for each $i=1,\dots,k$. Then $D$ admits a divisorial Zariski decomposition, i.e. we can write uniquely $D=P(D)+N(D)$ such that:
\begin{enumerate}
\item $P(D)$ is an effective, $q_X$-nef $\mathbb{R}$-divisor.
\item $N(D)$ is an effective divisor and, if non-zero, it is $q_X$-exceptional.
\item $P(D)$ is orthogonal (with respect to $q_X$) to each irreducible component of $N(D)$.
\end{enumerate}
The divisor $P(D)$ (resp. $N(D)$) is called the \textit{positive part} (resp. \textit{negative part}) of $D$. 
\end{thm}

 Let us explain the terminology adopted in Theorem \ref{thm1}. A divisor $D$ on a projective IHS manifold $X$ is said $q_X$-nef if $q_X(D,E)\geq 0$ for any prime divisor $E$. An effective $\mathbb{R}$-divisor is said $q_X$-exceptional if the intersection matrix (with respect to $q_X$) of its irreducible components is negative-definite.  In particular, a prime divisor $E$ on $X$ is called $q_X$-exceptional if $q_X(E)<0$.

\begin{rmk}\label{rmkpac}
Following the approach of Kapustka, Mongardi, Pacienza and Pokora in \cite{Pac} to Boucksom's divisorial Zariski decomposition (which in turn relies on the ideas of Bauer contained in \cite{Bauer}) one can see that the positive part $P(D)$ of an effective divisor $D$ on a projective IHS manifold $X$ can be defined as the maximal $q_X$-nef subdivisor of $D$.
\end{rmk}

\begin{rmk} The divisorial Zariski decomposition is rational, namely, under the assumptions of Theorem \ref{thm1}, if the $a_i$ are rational, then $P(D)$ (and so also $N(D)$) is a  $\mathbb{Q}$-divisor.
\end{rmk}

\begin{rmk}\label{rmk3} We note that the divisorial Zariski decomposition behaves well with respect to the numerical equivalence relation. Indeed, let $X$ be a projective IHS manifold and $\alpha$ an effective class in $N^1(X)_{\mathbb{R}}$. Suppose $\alpha = [E_1]$, $\alpha = [E_2]$, where $E_1,E_2$ are effective $\mathbb{R}$-divisors. Write $E_1=P(E_1)+N(E_1)$ (resp. $E_2=P(E_2)+N(E_2)$) for the divisorial Zariski decomposition of $E_1$ (resp. $E_2$). Then $E_1-E_2=T$, where $T$ is numerically trivial. Thus 
\[
P(E_1)+N(E_1)=(P(E_2)+T)+N(E_2),
\] 
and by the unicity of the divisorial Zariski decomposition, we must have $P(E_1)=P(E_2)+T$, because $P(E_2)+T$ is $q_X$-nef, $q_X$-orthogonal to $N(E_1)$ and $N(E_1)$ is $q_X$-exceptional by hypothesis. In particular $P(E_1) \equiv_{\mathrm{num}} P(E_2)$ and $N(E_1)=N(E_2)$, thus $[D]=[P(E_1)]+[N(E_1)]$ in $N^1(X)_{\mathbb{R}}$.
\end{rmk}

In the case of surfaces, the intersection form with its properties and Zariski decomposition played a fundamental role in obtaining Theorem \ref{decompsurface}. On the other hand, on an IHS manifold, we have the Beauville-Bogomolov-Fujiki form and the divisorial Zariski decomposition, and so it seems natural to ask whether Theorem \ref{decompsurface} can be generalized to the case of IHS manifolds. We were able to answer positively to this question. In particular, we obtained the following
theorem, which is one of the main results of this work.

\begin{thm}\label{mainthm1}
Let $X$ be a projective irreducible holomorphic symplectic manifold of complex dimension $2n$.
\begin{enumerate}
\item There is a locally finite decomposition of the big cone $\mathrm{Big}(X)$ of $X$ into locally rational polyhedral subcones, called \textit{Boucksom-Zariski chambers}, such that in each subcone the support of the negative part of the divisorial Zariski decomposition of the divisors is constant.
\item For any big $\mathbb{R}$-divisor $D$ with divisorial Zariski decomposition $D=P(D)+N(D)$, we have $\mathrm{vol}(D)=(P(D))^{2n}$. In particular, on each Boucksom-Zariski chamber, the volume function is expressed by a single homogeneous polynomial of degree $2n$, and so it is piecewise polynomial.
\end{enumerate} 
\end{thm}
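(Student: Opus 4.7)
My plan is to prove the formula $\mathrm{vol}(D)=P(D)^{2n}$ of item~(2) first, since it is independent of the chamber analysis but, combined with item~(1), immediately yields the polynomial statement. The key inputs are the general property of Boucksom's decomposition that the negative part is irrelevant for asymptotic sections, i.e.\ $\mathrm{vol}(D)=\mathrm{vol}(P(D))$, together with the Fujiki relation $\alpha^{2n}=c_X\,q_X(\alpha)^n$ on the IHS manifold $X$. Since $D$ is big, $\mathrm{vol}(P(D))>0$, so $P(D)$ is big; combined with modified-nefness and the fact that on an IHS manifold the modified-nef cone is contained in the closure of the positive cone, $P(D)$ lies in the interior of the positive cone, where $q_X(P(D))>0$. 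On this open subcone the volume function coincides with $\alpha^{2n}$ (by continuity and its standard value on ample classes via asymptotic Riemann-Roch), yielding $\mathrm{vol}(D)=P(D)^{2n}$.

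For item~(1), to each finite set $\mu=\{E_1,\dots,E_k\}$ of prime divisors with negative-definite Gram matrix I associate the subset $\Sigma_\mu\subset\mathrm{Big}(X)$ of big classes $D$ whose Boucksom-Zariski negative part has support exactly $\mu$. Writing $N(D)=\sum_i a_i(D)E_i$, the orthogonality $q_X(P(D),E_j)=0$ gives the linear system
\[
q_X(D,E_j)=\sum_i a_i(D)\,q_X(E_i,E_j),\qquad j=1,\dots,k,
\]
whose matrix is invertible, so the $a_i(D)$ are explicit linear forms in $D$ and $P(D)=D-\sum_i a_i(D)E_i$ depends linearly on $D$. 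The chamber $\Sigma_\mu$ is then cut out locally by the linear inequalities $a_i(D)>0$ together with the requirement that $P(D)$ be modified-nef, which near a fixed interior point reduces to finitely many linear inequalities (against boundary classes of the modified-nef cone described via prime exceptional divisors). Hence each $\Sigma_\mu$ is locally rational polyhedral; plugging this into the already established $\mathrm{vol}(D)=P(D)^{2n}$ gives the remaining polynomial statement of item~(2).

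The main obstacle is local finiteness of the decomposition $\{\Sigma_\mu\}_\mu$: any compact subset $K\subset\mathrm{Big}(X)$ must meet only finitely many $\Sigma_\mu$, equivalently only finitely many prime divisors $E$ may appear in $N(D)$ for some $D\in K$. Any such $E$ is $q_X$-exceptional, with $q_X(D,E)$ bounded in terms of $K$, $q_X(E)$, and a fixed ample class. On a surface this is essentially trivial as negative curves are discrete in N\'eron-Severi; on an IHS manifold, however, one needs a uniform lower bound on $q_X(E)$ for prime exceptional divisors $E$ (a Markman-type boundedness statement for prime exceptional classes on projective IHS manifolds), after which the classes $[E]$ are confined to a finite subset of $\mathrm{NS}(X)$ and the finiteness of prime divisors per numerical class closes the argument.
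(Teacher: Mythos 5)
Your outline for item~(2) contains a genuine gap. You assert that on the interior of the positive cone $\mathrm{vol}(\alpha)=\alpha^{2n}$ ``by continuity and its standard value on ample classes,'' but this identity is false on the full positive cone: a big class $\alpha$ may lie in the interior of $\mathscr{C}_X$ while having a nonzero negative part (take $\alpha=P+\varepsilon E$ with $P$ big $q_X$-nef, $E$ exceptional, $q_X(P,E)=0$, $\varepsilon$ small), and then $\mathrm{vol}(\alpha)=P^{2n}\neq\alpha^{2n}$ in general. Even if one restricts to the $q_X$-nef (modified-nef) cone, where the identity is true, ``continuity from the ample cone'' is not by itself sufficient: the ample cone is in general a proper open subcone of $\mathrm{int}(\mathrm{Mov}(X))$, and continuity of two functions agreeing on a smaller open set does not force equality on the larger closed cone unless one already knows $\mathrm{vol}$ is polynomial there (which is what is being proved). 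The paper closes this gap via Matsushita's theorem (Theorem~\ref{nefonbiratmodel}): for a rational $q_X$-nef big class one passes to a birational model $X'$ where the pushforward is honestly big and nef, where volume equals top self-intersection, and then transports the identity back using the Fujiki relation and the birational invariance of the Fujiki constant, extending by continuity to real classes (see Proposition~\ref{upshotprop}). You do not supply an argument replacing this, so the crucial equality $\mathrm{vol}(P(D))=\int_X P(D)^{2n}$ is not established.

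For the local finiteness in item~(1), your proposal is strategically different and heavier than necessary: you reduce to a Markman-type boundedness of $q_X(E)$ for prime exceptional divisors $E$, which you neither prove nor cite precisely, and which is not needed. The paper's argument is elementary: it observes (Remark~\ref{locallyfinitelem}) that for any ample $A$ one has $\mathrm{Neg}_{q_X}(D+A)\subset\mathrm{Neg}_{q_X}(D)$, since $P(D)+A$ is a $q_X$-nef subdivisor of $D+A$ and $P$ is characterized by maximality. Every big class then has a neighbourhood of the form $[D']+\mathrm{Amp}(X)$ with $D'$ big, on which the negative support stays inside the finite set $\mathrm{Neg}_{q_X}(D')$, and this already bounds the number of chambers met (Proposition~\ref{localfinitenessprop}). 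Your linear-system description of the coefficients $a_i(D)$ and the local polyhedrality via finitely many linear inequalities from the $q_X$-nef condition are in the same spirit as the paper's Corollary~\ref{localpolyhed} and Corollary~\ref{corlocpolyhed}, but the justification that only finitely many walls contribute near a given point again relies on the ample-perturbation argument, which is the piece you replaced with boundedness of exceptional classes. As written, item~(1) therefore also has an unfilled step.
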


\begin{rmk}
The projectivity assumption in Theorem \ref{mainthm1} is not restrictive, because a compact complex manifold having a big line bundle is Moishezon, hence, if K\"ahler, also projective.
\end{rmk}

With the notation of Theorem \ref{mainthm1}, recall that the volume of an integral divisor $D$ on $X$ is defined as 
\[
\mathrm{vol}(D):=\limsup_{k \to \infty} \frac{h^0(X,kD)}{k^{2n}/(2n)!}.
\]
See Subsection 2.1 for the definition in the rational case and the real case.

Now, let us make more precise item 1 in the statement of Theorem \ref{mainthm1}.

\begin{defn}\label{deflocalpolyhed} Let $V$ be a finite dimensional $\mathbb{R}$-vector space. A closed convex cone $K\subset V$ with a non-empty interior is locally rational polyhedral at $v \in K$ if $v$ has an open neighborhood $U=U(v)$, such that $K \cap U$ is defined in $U$ by a finite number of rational linear inequalities. 
\end{defn}

\begin{rmk}
In the statement of Theorem \ref{mainthm1}, item 1,
\begin{enumerate} 
\item "locally finite" means that any point in $\mathrm{Big}(X)$ has a neighbourhood meeting only a finite number of chambers;
\item  "locally rational polyhedral subcones" means that any of these subcones is locally rational polyhedral at any big point of its closure in the N\'{e}ron-Severi space, in the sense of Definition \ref{deflocalpolyhed}.
\end{enumerate}
\end{rmk}
 The strategy of our proof is based on that adopted by Bauer, K\"uronya and Szemberg in \cite{Bau}.
   \begin{rmk}
  Note that, in \cite{Bau}, the authors also studied the behavior of the augmented base loci (of divisors on a smooth projective surface). In this case, the problem led to another decomposition of the big cone into chambers, called \textit{stability chambers}, on each of which the augmented base loci remain constant. It turned out that the stability chambers essentially agree with the Zariski chambers (see Theorem \ref{stabilitychambers}). In general, in the case of a projective IHS manifold, the Boucksom-Zariski chambers do not agree with the stability chambers (see Remark \ref{rmkstabchambers}).
  \end{rmk}
  
  The big cone of a smooth projective surface $S$ can also be decomposed into \textit{simple Weyl chambers}, which are defined as the connected components of 
\[
\mathrm{Big}(S) \setminus \left(\bigcup_{C \in \mathrm{Neg}(S)} C^{\perp} \right),
\]
 where $\mathrm{Neg}(S)$ is the set of irreducible and reduced curves of negative square and, given a curve $C$, $C^{\perp}$ is the hyperplane in the N\'{e}ron-Severi space of classes orthogonal to the curve $C$, with respect to the usual intersection form.  In several papers, the Zariski chambers and the simple Weyl chambers have been compared.  First, Bauer and Funke in \cite{Bauer1} studied the case of projective K3 surfaces. Then Rams and Szemberg in \cite{Szemberg}, and, more recently, Hanumanthu and Ray in \cite{Hanumanthu}, studied the case of any smooth projective surface. We are mainly interested in the following result.

 \begin{thm}[Theorem 3 of \cite{Szemberg}, see also Theorem 3.3 of \cite{Hanumanthu}]
 Let $S$ be a smooth projective surface. The following conditions are equivalent:
 \begin{enumerate}
 \item The interior of the Zariski chambers on $S$ coincides with the simple Weyl chambers.
 \item If two different reduced and irreducible negative curves $C_1, C_2$ on $S$ intersect properly (i.e. $C_1\cdot C_2>0$), then $C_1\cdot C_2\geq \sqrt{C_1^2C_2^2}$.
 \end{enumerate}
  \end{thm}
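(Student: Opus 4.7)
The plan is to use the sign vector $\bigl(\mathrm{sign}(D \cdot C)\bigr)_{C \in \mathrm{Neg}(S)}$ as a bridge between the two decompositions, since this vector tautologically determines the simple Weyl chamber of $D$ and it suffices to decide when it also determines the Zariski chamber. The starting observation is a reformulation of (2): for distinct $C_1, C_2 \in \mathrm{Neg}(S)$ with $C_1 \cdot C_2 > 0$, the $2 \times 2$ Gram matrix has negative trace and determinant $C_1^2 C_2^2 - (C_1 \cdot C_2)^2$, so it is negative-definite if and only if $C_1 \cdot C_2 < \sqrt{C_1^2 C_2^2}$. Since by property (3) of the Zariski decomposition two distinct irreducible curves can both appear in $\mathrm{Supp}(N(D))$ only when their Gram matrix is negative-definite, condition (2) is equivalent to: for every big $\mathbb{R}$-divisor $D$, any two distinct irreducible curves in $\mathrm{Supp}(N(D))$ are disjoint.

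For $(2) \Rightarrow (1)$, let $D$ be big with $\mathcal{N}(D) := \mathrm{Supp}(N(D)) = \{C_1, \ldots, C_k\}$. By the reformulation above the $C_i$ are pairwise disjoint, so their Gram matrix is diagonal; writing $N(D) = \sum_i a_i C_i$ with $a_i > 0$ and using $P(D) \cdot C_j = 0$ I get $D \cdot C_j = a_j C_j^2 < 0$. Conversely, if $C \in \mathrm{Neg}(S)$ satisfies $D \cdot C < 0$, then nefness of $P(D)$ gives $N(D) \cdot C < 0$, and since distinct irreducible curves meet non-negatively, $C$ must occur as a component of $N(D)$. Hence $\mathcal{N}(D) = \{C \in \mathrm{Neg}(S) : D \cdot C < 0\}$, so the Zariski chamber of $D$ is read off from its sign vector alone and coincides with the simple Weyl chamber of $D$.

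For $(1) \Rightarrow (2)$ I argue by contrapositive. Assume that distinct $C_1, C_2 \in \mathrm{Neg}(S)$ satisfy $0 < C_1 \cdot C_2 < \sqrt{C_1^2 C_2^2}$; then the Gram matrix of $\{C_1, C_2\}$ is negative-definite and, by Bauer--K\"uronya--Szemberg, the Zariski chamber $\Sigma_{\{C_1, C_2\}}$ is non-empty. Pick $D_0 \in \Sigma_{\{C_1, C_2\}}$ and consider the family $E(t_1, t_2) := P(D_0) + t_1 C_1 + t_2 C_2$ for $t_1, t_2 > 0$. By uniqueness of the Zariski decomposition each $E(t_1, t_2)$ lies again in $\Sigma_{\{C_1, C_2\}}$. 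A direct computation shows $E(t_1, t_2) \cdot C_1 = t_1 C_1^2 + t_2 (C_1 \cdot C_2)$, whose sign changes as the ratio $t_2/t_1$ crosses $|C_1^2|/(C_1 \cdot C_2)$. Picking the ratio on either side of this threshold -- while keeping the signs against $C_2$ and every other negative curve unchanged (possible because the sign threshold for $C_2$ is strictly smaller under the failure of (2)) -- yields $E, E' \in \Sigma_{\{C_1, C_2\}}$ lying in different simple Weyl chambers, contradicting (1).

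The main technical obstacle lies in this last construction: one must ensure that $P(D_0)$ is strictly positive against every negative curve outside $\{C_1, C_2\}$, so that no additional curve intrudes into $\mathrm{Supp}(N(E(t_1, t_2)))$ as $(t_1, t_2)$ varies. This is handled by choosing $D_0$ in the open part of the Zariski chamber together with the local finiteness of the Zariski chamber decomposition, which reduces the verification to finitely many negative curves in a neighborhood of $D_0$.
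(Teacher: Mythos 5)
Your argument follows essentially the same route as the paper's proof of its IHS generalization (Theorem \ref{mainthm3}): reformulate condition $(2)$ as a disjointness statement on two-element negative-definite blocks (cf.\ Lemma \ref{bzcnumericallydetermined0}), use strong Hodge-index to read the support of the negative part off the sign vector, and for the converse exhibit a perturbation within a fixed Zariski chamber that flips a sign (cf.\ the $\Rightarrow$ direction of Proposition \ref{bzcinwc}). The organizational difference is that the paper isolates two one-sided containment criteria (Propositions \ref{wcinbzc} and \ref{bzcinwc}) and assembles the equivalence from them, which yields slightly finer information about which Weyl chambers meet which Zariski chambers, whereas you argue both implications head-on; your presentation of $(2)\Rightarrow(1)$ via the sign vector is a clean streamlining of the $\Leftarrow$ halves of those two propositions.

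The one place the write-up is not yet complete is the final sentence of $(2)\Rightarrow(1)$. Establishing $\mathrm{Supp}(N(D))=\{C:D\cdot C<0\}$ for every big $D$ shows that the sign vector determines the Zariski chamber containing $D$, but the theorem asserts equality of the \emph{interior} of that chamber with the Weyl chamber, and membership in the interior requires the additional condition $\mathrm{Null}(P(D))=\mathrm{Supp}(N(D))$. So you still owe the following step: if $D\in W_S$ and some $C\in\mathrm{Null}(P(D))\setminus S$ existed, then $\{C\}\cup S\subset P(D)^{\perp}$, which is negative-definite by the strong Hodge-index theorem since $P(D)$ is big and nef; hence $\{C\}\cup S$ is a negative-definite block, and by $(2)$ in the reformulated form you already derived, $C$ would be disjoint from every curve in $S$, giving $D\cdot C=P(D)\cdot C+N(D)\cdot C=0$ and contradicting $D\in W_S$. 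With that line added, the $(2)\Rightarrow(1)$ direction is complete; your $(1)\Rightarrow(2)$ construction, including the observation that the threshold for $C_2$ falls strictly below the threshold for $C_1$ precisely because the $2\times 2$ Gram matrix is negative-definite, is correct as stated.
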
 
The notion of simple Weyl chamber can be naturally generalized to the case of a projective IHS manifold $X$, by replacing the reduced and irreducible curves with the $q_X$-exceptional prime divisors. In this way, as for surfaces, we obtain another decomposition of $\mathrm{Big}(X)$ into chambers, which we also call \textit{simple Weyl chambers}. As for surfaces, it is natural to compare the two decompositions of $\mathrm{Big}(X)$ introduced so far. The main result of this part of the work is the following Theorem, which is a natural generalization of the above result by Rams and Szemberg.

 \begin{thm}\label{mainthm3}
 Let $X$ be a projective IHS manifold. The following conditions are equivalent:
 \begin{enumerate}
 \item The interior of the Boucksom-Zariski chambers coincides with the simple Weyl chambers.
 \item If two different $q_X$-exceptional prime divisors $D_1, D_2$ on $X$ intersect properly (i.e. $q_X(D_1,D_2)>0$), then $q_X(D_1,D_2)\geq \sqrt{q_X(D_1)q_X(D_2)}$.
 \end{enumerate}
  \end{thm}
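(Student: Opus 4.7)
The argument adapts the surface proof of Rams--Szemberg \cite{Szemberg} to the IHS setting, with $q_X$ replacing the intersection pairing and the Boucksom--Zariski decomposition replacing the Zariski decomposition. Throughout, I use that a Boucksom--Zariski chamber $\Sigma$ in $\mathrm{Big}(X)$ is indexed by a set $S = \{D_1,\dots,D_k\}$ of $q_X$-exceptional prime divisors with negative-definite Gram matrix, its elements being the big classes of the form $\alpha = P + \sum_i a_i D_i$ with $a_i > 0$, $P$ modified nef, and $P$ $q_X$-orthogonal to each $D_i$; the interior $\Sigma^\circ$ further requires $a_i > 0$ strictly and $P$ in the interior of the modified nef cone, whence $q_X(P, E) > 0$ for every $q_X$-exceptional prime $E \notin S$ (null-locus description of $P$).

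For $(2)\Rightarrow(1)$, the negative-definiteness of the Gram matrix of $S$ forces $q_X(D_i,D_j)^2 < q_X(D_i)q_X(D_j)$, so under (2) one must have $q_X(D_i,D_j) \le 0$ for $i \ne j$; combined with the Fujiki-type positivity $q_X(D_i,D_j) \ge 0$ for distinct prime divisors, this gives $q_X(D_i,D_j) = 0$, i.e., any BZ negative set is automatically $q_X$-orthogonal. For $\alpha \in \Sigma^\circ$ and $D_j \in S$, orthogonality yields $q_X(\alpha,D_j) = a_j\, q_X(D_j) < 0$, so $\alpha \notin D_j^\perp$. For a $q_X$-exceptional prime $E \notin S$,
\[
q_X(\alpha, E) \;=\; q_X(P, E) + \sum_i a_i\, q_X(D_i, E) \;>\; 0,
\]
since $q_X(P,E) > 0$ from the interior condition on $P$ and $q_X(D_i, E) \ge 0$. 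Thus $\Sigma^\circ \subset \mathrm{Big}(X) \setminus \bigcup_E E^\perp$. Conversely, the wall separating adjacent BZ chambers differing in a single element $D$ of their supports lies in $D^\perp$ (crossing it forces $a_D \to 0$, hence $q_X(\alpha, D) \to 0$), so every connected component of $\mathrm{Big}(X)\setminus\bigcup_E E^\perp$ is contained in some $\Sigma^\circ$. Being open and convex, the $\Sigma^\circ$ are therefore exactly the simple Weyl chambers.

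For $(1)\Rightarrow(2)$ I argue by contraposition. Assume distinct exceptional primes $D_1, D_2$ with $0 < q_X(D_1, D_2) < \sqrt{q_X(D_1)q_X(D_2)}$. Their Gram matrix has negative diagonal and positive determinant, hence is negative-definite, so $\{D_1, D_2\}$ supports a BZ chamber $\Sigma$. For $\alpha = P + a_1 D_1 + a_2 D_2 \in \Sigma^\circ$, using $P \perp D_1$,
\[
q_X(\alpha, D_1) \;=\; a_1\, q_X(D_1) + a_2\, q_X(D_1, D_2),
\]
which vanishes when $a_2/a_1 = -q_X(D_1)/q_X(D_1,D_2) > 0$ --- a ratio attained in the open positive quadrant. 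For such a choice $\alpha \in \Sigma^\circ \cap D_1^\perp$, so $\Sigma^\circ$ is cut by a Weyl wall and cannot be a simple Weyl chamber, contradicting (1).

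The most delicate preliminary inputs will be: (i) the precise description of $\Sigma^\circ$ and its walls in terms of the strict inequalities $a_i > 0$ together with the null-locus behaviour of $P$; (ii) the non-emptiness of $\Sigma$ for every negative-definite set of exceptional primes (equivalently, the existence of a big modified nef class $q_X$-orthogonal to each prescribed $D_i$); and (iii) the Fujiki-type positivity $q_X(D_i, D_j) \ge 0$ on distinct prime divisors. These should all follow from the structure theory of the Boucksom--Zariski decomposition developed earlier in the paper (notably Theorem \ref{mainthm1} and its proof). Once they are in hand, the two implications reduce to the linear-algebra manipulations above.
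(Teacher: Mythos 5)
Your two implications track the right ideas but are organized quite differently from the paper's argument, so let me compare. For $(1)\Rightarrow(2)$ you argue by contraposition, directly exhibiting, inside the interior of the chamber $\Sigma$ attached to a two-element block $\{D_1,D_2\}$ with $0<q_X(D_1,D_2)<\sqrt{q_X(D_1)q_X(D_2)}$, a class $\alpha=P+a_1D_1+a_2D_2$ with $a_2/a_1=-q_X(D_1)/q_X(D_1,D_2)>0$ landing on the Weyl wall $D_1^\perp$; this is correct (it uses the interior description $\mathrm{int}(\Sigma_P)=\mathrm{rel.int.Face}(P)+V^{>0}(\mathrm{Null}_{q_X}(P))$ and the non-emptiness of the face, both established earlier in the paper) and is genuinely more elementary than the paper's route, which derives $(1)\Rightarrow(2)$ from the criterion $W_S\subseteq BZ_S$ (Proposition~\ref{wcinbzc}) and the chain of equivalences in Lemma~\ref{bzcnumericallydetermined0}. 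What the paper's approach buys in exchange for the detour is a package of reusable results (the separate criteria $W_S\subseteq BZ_S$ and $\mathrm{int}(BZ_S)\subseteq W_S$, and Proposition~\ref{intersectionwcbzc}), while your direct construction proves exactly the theorem and nothing more.

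The one place that needs tightening is the converse inclusion in your $(2)\Rightarrow(1)$. You correctly reduce $(2)$ to pairwise $q_X$-orthogonality inside any $q_X$-exceptional block (via Proposition~\ref{propsignature} and Proposition~\ref{intersproduct}), and the inclusion $\Sigma^\circ\subset W_S$ is fine. But the sentence "the wall separating adjacent BZ chambers differing in a single element $D$ of their supports lies in $D^\perp$ (crossing it forces $a_D\to 0$, hence $q_X(\alpha,D)\to 0$)" silently uses two things you have not established: that codimension-one boundary pieces are always of this "single element" form, and that $a_D\to 0$ forces $q_X(\alpha,D)\to 0$, which requires the orthogonality of $D$ with the remaining $D_i$'s. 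The second is available precisely because of $(2)$, but you should say so. The cleaner route, available from the paper's structure results, is: if $\alpha\in\mathrm{Big}(X)$ avoids every Weyl wall but is \emph{not} interior to any BZ chamber, then by Proposition~\ref{propborder} there is $E\in\mathrm{Null}_{q_X}(P(\alpha))\setminus\mathrm{Neg}_{q_X}(\alpha)$; by Remark~\ref{rmkbackzariskidecomp} the set $\mathrm{Null}_{q_X}(P(\alpha))$ is a $q_X$-exceptional block, so under $(2)$ its members are pairwise orthogonal, whence $q_X(N(\alpha),E)=0$ and also $q_X(P(\alpha),E)=0$, giving $\alpha\in E^\perp$ --- contradiction. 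With that substitution your proof is complete, and it bypasses Propositions~\ref{wcinbzc} and~\ref{bzcinwc} entirely, replacing them by the boundary description of chambers.
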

   Theorem \ref{mainthm3} explains when the decomposition of $\mathrm{Big}(X)$ into Boucksom-Zariski chambers and the one into simple Weyl chambers coincide, and, to obtain it, we first generalize Theorem 3.1 and 3.2 of \cite{Hanumanthu} (which are in turn a generalization of the items in Theorem 2 of \cite{Bauer1}) to our case (and these are interesting results in their own); this will be enough to deduce Theorem \ref{mainthm3}. 
   
 \begin{plan}
 Let $X$ be a projective IHS manifold.
 \begin{itemize}
 \item In the second section, we give the basic notions and results that will be used to achieve our goals.
 \item In the third section, we extend Theorem 3.6 contained in \cite{Pac} to the case of effective $\mathbb{R}$-divisors on $X$. 
 \item In the fourth section, we determine the structure of the pseudo-effective cone $\overline{\mathrm{Eff}(X)}$. It turns out that, thanks to the properties of the Beauville-Bogomolov-Fujiki form, the structure of $\overline{\mathrm{Eff}(X)}$ is analogous to that of the Kleiman-Mori cone in the case of surfaces.
 \item In the fifth section, we give a decomposition of the big cone of $X$ into chambers (that will be called \textit{Boucksom-Zariski chambers}), by studying how the divisorial Zariski decomposition varies in $\mathrm{Big}(X)$, and describe these chambers in detail.
 \item In the sixth section, using the above decomposition of $\mathrm{Big}(X)$, we study the behavior of the volume function $\mathrm{vol}(-)$ on $\mathrm{Big}(X)$. In particular, we show that it is locally polynomial, and the pieces of the big cone at which $\mathrm{vol}(-)$ is polynomial are exactly the Boucksom-Zariski chambers. The description of $\mathrm{vol}(-)$ is completely analogous to that in the case of surfaces. While in the case of surfaces, the result can be easily deduced from the description of the Zariski chambers, in the case of projective IHS manifolds we have to pay more attention.
 \item In the seventh section we describe the Boucksom-Zariski chambers and the volume function of the Hilbert square of a certain K3 surface. Also, we show that, in general, the augmented base loci are not constant in the interior of the Boucksom-Zariski chambers.
 \item In the eighth section, after introducing and characterizing the simple Weyl chambers for a projective IHS manifold, we compare the decomposition of $\mathrm{Big}(X)$ into Boucksom-Zariski chambers to the one into simple Weyl chambers.
 \end{itemize}
 \end{plan}

\section{Background, notations and conventions}
Let us start by recalling what is an irreducible holomorphic symplectic manifold.
\begin{defn}\label{defIHS}
An irreducible holomorphic symplectic manifold is a simply connected compact K\"ahler manifold $Y$, such that $H^0(Y,\Omega^2_Y)=\mathbb{C}\sigma$, where $\sigma$ is a holomorphic symplectic form.
\end{defn}
For a general introduction to the subject, we refer the reader to \cite{Joyce}. We will work in the projective setting, thus in what follows we assume that $X$ is a projective IHS manifold of complex dimension $2n$.  A prime divisor on $X$ will be a reduced and irreducible hypersurface. We said that $H^2(X,\mathbb{C})$ can be endowed with a quadratic form $q_X$, known as the Beauville-Bogomolov-Fujiki form. In particular, choosing the symplectic form $\sigma$ in such a way that  $\int_X(\sigma\overline{\sigma})^n=1$, one can define
\[
q_X(\alpha):=\frac{n}{2}\int_X (\sigma\overline{\sigma})^{n-1}\alpha^2+(1-n)\left(\int_X\sigma^n\overline{\sigma}^{n-1}\alpha\right)\cdot \left(\int_X\sigma^{n-1}\overline{\sigma}^n\alpha\right),
\]
for any $\alpha \in H^2(X,\mathbb{C})$. The quadratic form $q_X$ is non-degenerate and its signature on $H^2(X,\mathbb{R})$ is $\left(3,b_2(X)-3\right)$. Also, the following important fact holds.
\begin{thm}[Beauville-Bogomolov-Fujiki relation]\label{fujikirelation}
There exists a positive constant $c_X \in \mathbb{R}^{>0}$, known as Fujiki constant, such that $q_X(\alpha)^n= c_X\int_X \alpha^{2n}$,
for all $\alpha \in H^2(X,\mathbb{C})$. In particular, $q_X$ can be renormalized such that $q_X$ is a
primitive integral quadratic form on $H^2(X, \mathbb{Z})$. 
\end{thm}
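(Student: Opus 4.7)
The plan is to compare two homogeneous degree-$2n$ polynomial functions on $H^2(X,\mathbb{C})$: the topological invariant $F(\alpha) := \int_X \alpha^{2n}$ and the algebraic $G(\alpha) := q_X(\alpha)^n$, and show that $F = c_X \cdot G$ with $c_X > 0$. The strategy is to compute both polynomials in a convenient basis at the reference complex structure and then propagate the proportionality via a deformation-theoretic symmetry argument.

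First, I would decompose $\alpha = \lambda\sigma + \beta + \mu\overline{\sigma}$ with $\beta \in H^{1,1}(X,\mathbb{R})$ and expand $\alpha^{2n}$ multinomially. Only the "balanced" terms $\sigma^{i}\overline{\sigma}^{i}\beta^{2n-2i}$ have Hodge bidegree $(2n,2n)$ and thus contribute to $\int_X\alpha^{2n}$, collapsing $F(\alpha)$ into a weighted sum of the moments $A_k := \int_X(\sigma\overline{\sigma})^{n-k}\beta^{2k}$, $k = 0,\dots,n$. A parallel direct computation using the given formula for $q_X$, the normalization $\int_X(\sigma\overline{\sigma})^n = 1$, and the observation that on $H^{1,1}$ the rank-one term in the definition of $q_X$ vanishes for type reasons, yields $q_X(\alpha) = q_X(\beta) + \lambda\mu$, hence $G(\alpha) = (q_X(\beta) + \lambda\mu)^n$. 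Matching coefficients of $(\lambda\mu)^{n-k}$ in the two expansions reduces the theorem to the identities $A_k = \gamma_k\, q_X(\beta)^k$ on $H^{1,1}(X,\mathbb{R})$ for explicit combinatorial constants $\gamma_k$; the case $k = 1$ is built into the definition of $q_X$ (giving $\gamma_1 = 2/n$), but the cases $k \geq 2$ are not formal.

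The main obstacle is precisely the higher-order identities $A_k \propto q_X(\beta)^k$ for $k \geq 2$, which are genuinely geometric. The plan is to exploit deformation invariance: since $F$ depends only on the cup product on $H^*(X,\mathbb{C})$ and the fundamental class, it is constant along the Kuranishi family of $X$ once $\alpha$ is identified via parallel transport. By the local Torelli theorem and the description of the period domain $\mathcal{D} = \{[\eta] \in \mathbb{P}(H^2(X,\mathbb{C})) \mid q_X(\eta,\eta) = 0,\ q_X(\eta,\overline{\eta}) > 0\}$, any vector in the positive cone of $q_X$ arises as the period $[\sigma_{X'}]$ of a nearby IHS manifold $X'$; conjugating by these deformations realises an open subset of $\mathrm{O}(H^2(X,\mathbb{R}), q_X)$ acting trivially on $F$. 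Classical orthogonal invariant theory then forces $F$ to be a scalar multiple of $q_X^n$. Positivity of $c_X$ is checked by evaluating both sides on a Kähler class $\omega$, where $\int_X\omega^{2n} > 0$ since $\omega^{2n}$ is a volume form and $q_X(\omega) > 0$ by signature considerations; the final renormalization turning $q_X$ into a primitive integral form on $H^2(X,\mathbb{Z})$ is then a routine lattice-theoretic adjustment.
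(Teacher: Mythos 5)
The paper does not prove this theorem: it refers the reader to Proposition~23.14 of \cite{Joyce} and to \cite{Fujiki}. Your proposal, by contrast, attempts a genuine argument, and its set-up is sound. The decomposition $\alpha = \lambda\sigma + \beta + \mu\overline{\sigma}$ with $\beta \in H^{1,1}(X,\mathbb{R})$, the type-vanishing that isolates the balanced terms $\sigma^i\overline{\sigma}^i\beta^{2(n-i)}$, the identity $q_X(\alpha) = q_X(\beta) + \lambda\mu$, and the reduction to the moment identities $A_k = \gamma_k\, q_X(\beta)^k$ (with $\gamma_1 = 2/n$ built into the definition of $q_X$) are all correct and match the standard presentation.

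The gap is in your third step. A small slip first: the period of $X'$ satisfies $q_X(\sigma_{X'}) = 0$ and $q_X(\sigma_{X'},\overline{\sigma_{X'}}) > 0$; it lies on the quadric $Q=\{q_X=0\}$, not in the positive cone. More seriously, the jump from ``local Torelli lets the period sweep out an open subset of $Q$'' to ``this realises an open subset of $\mathrm{O}(H^2(X,\mathbb{R}),q_X)$ acting trivially on $F$'' does not go through. A small deformation $X_t$ changes only the complex structure; under the Gauss--Manin identification the lattice $H^2(X,\mathbb{Z})$, the cup product, and hence the polynomial $F$ are literally unchanged, so deformation theory furnishes no orthogonal transformations at all. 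Invariance of $F$ under a Zariski-dense subgroup of $\mathrm{O}(q_X)$ would indeed close the argument by classical invariant theory, but establishing such invariance amounts to density of the monodromy group, which is Verbitsky's much deeper theorem and not an input that Fujiki or Beauville used.

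What actually closes the argument, and is the content of the cited references, is a vanishing-order computation on $Q$ in place of invariant theory. Since $\sigma_t \in H^{2,0}(X_t)$ and $\dim_{\mathbb{C}} X = 2n$, one has $\sigma_t^{\,n+1} = 0$ in $H^*(X_t,\mathbb{C})$ for every $t$ in the Kuranishi space. Differentiating the degree-$2n$ polynomial $F(\alpha)=\int_X\alpha^{2n}$ up to $n-1$ times in arbitrary directions $\gamma_1,\dots,\gamma_{n-1}\in H^2(X,\mathbb{C})$ and evaluating at $\alpha=\sigma_t$ produces, up to constants, $\int_X \sigma_t^{\,n+1}\gamma_1\cdots\gamma_{n-1} = 0$. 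Thus $F$ vanishes to order $n$ along the image of the period map, which by local Torelli is a non-empty Euclidean-open, hence Zariski-dense, subset of the irreducible quadric $Q$. Therefore every partial derivative of $F$ of order $<n$ vanishes on all of $Q$, so $q_X^{\,n}$ divides $F$, and by degree counting $F = c_X\, q_X^{\,n}$. Your final remarks, positivity of $c_X$ via a K\"ahler class and the lattice-theoretic renormalization giving the primitive integral form, are fine once this proportionality is in place.
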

\begin{proof}
See for example Proposition 23.14 of \cite{Joyce}, or \cite{Fujiki}.
\end{proof}

 \subsection{\textit{Positivity notions}}
 Let $Y$ be a (complex) normal projective variety of (complex) dimension $d$. Recall that the Néron-Severi group of $Y$ can be defined as $N^1(Y) \otimes_{\mathbb{Z}} \mathbb{R}$, where $N^1(Y):=\frac{\mathrm{Div}(Y)}{\equiv_{\mathrm{num}}}$, $\mathrm{Div}(Y)$ is the group of integral Cartier divisors of $Y$, and $\equiv_{\mathrm{num}}$ is the numerical equivalence relation. A divisor will be a formal linear combination (with coefficients in $\mathbb{R}$) of integral Cartier divisors. We tacitly assume that any divisor is Cartier.  
 \begin{rmk}
 In the case of IHS manifolds the numerical and linear equivalence relations coincide, hence we can also define the Néron-Severi space as $N^1(X)_{\mathbb{R}}:=\mathrm{Pic}(X) \otimes_{\mathbb{Z}} \mathbb{R}$. Also, it is important to keep in mind that since 
\[
N^1(X)_{\mathbb{R}} \cong H^{1,1}(X,\mathbb{Z}) \otimes_{\mathbb{Z}} \mathbb{R} \subset H^{1,1}(X,\mathbb{R}),
\] 
the Beauville-Bogomolov-Fujiki form respects the numerical equivalence relation.
 \end{rmk}
 \begin{defn}
 An integral divisor $D$ on $Y$ is said big if there exists a positive integer $m$ such that $mD \equiv_{\mathrm{num}} A+N$, where $A$ is integral and ample, and $N$ is integral and effective. When $D$ is a $\mathbb{Q}$-divisor, we say that it is big if there exists a positive integer $m$ such that $mD$ is integral and big. If $D$ is an $\mathbb{R}$-divisor, we say that it is big if $D=\sum_ia_iD_i$, where $a_i$ is a positive real number and $D_i$ is integral and big, for any $i$.
 \end{defn}
 
 The latter tells us that, up to being sufficiently twisted, a big, integral divisor induces a rational map that is birational onto its image. Hence, in a certain sense, being big means being "generically ample". Note that bigness behaves well with respect to $\equiv_{\mathrm{num}}$, namely, if $D\equiv_{\mathrm{num}} D'$, $D$ is big if and only if $D'$ is big.
 
 \begin{ex}
 Any ample divisor on $Y$ is big. On the other hand, a big divisor is not necessarily ample. For example, consider $\pi \colon \mathrm{Bl}_P\mathbb{P}^2 \to \mathbb{P}^2$, and let $l$ be a line of $\mathbb{P}^2$ not containing the closed point $P$. Then $\pi ^{*}l$ is big but not ample because $\pi ^{*}l$ does not intersect the exceptional divisor. An example of a non-big divisor is given by a $q_X$-exceptional prime divisor $E$ on $X$ (see item 3 in Remark \ref{rmk6}). For instance, a smooth rational curve on a smooth K3 surface is not big.
 \end{ex}
 
 We will denote the ample cone of $Y$ by $\mathrm{Amp}(Y)$.  Recall that the elements of $\mathrm{Amp}(Y)$ are those of the form $\alpha=\sum_ia_i[D_i]$, where any $a_i$ is a positive, real number, and any $D_i$ is an integral, ample divisor. 
 
 A divisor $D$ on $Y$ is said nef if $D \cdot C \geq 0$, for any irreducible and reduced curve $C$ on $Y$. The nef cone $\mathrm{Nef}(Y)$ is the convex cone spanned by the nef divisors' classes. It is known that $\mathrm{Nef}(Y)=\overline{\mathrm{Amp}(Y)}$, and that $\mathrm{Amp}(Y)$ is the interior of  $\mathrm{Nef}(Y)$ (see Theorem 1.4.23 of \cite{Laz}).
 
  The big cone $\mathrm{Big}(Y)$ of $Y$ is the convex cone of all big $\mathbb{R}$-divisor classes. The pseudo-effective cone of $Y$ can be defined as the closure of the big cone in $N^1(Y)_{\mathbb{R}}$ and is denoted by $\overline{\mathrm{Eff}(Y)}$. By a standard result (see Theorem 2.2.26 of\cite{Laz}), the big cone is the interior of $\overline{\mathrm{Eff}(Y)}$. 
\begin{defn} 
 Let $D$ be an integral divisor on $Y$. The volume of $D$ is defined as
\[
\mathrm{vol}(D):=\limsup_{k \to \infty} \frac{h^0(Y,kD)}{k^d/d!}.
\]
\end{defn}
It is known that $\mathrm{vol}(D)>0$ if and only if $D$ is big. One defines the volume of a rational Cartier divisor $D$ by picking an integer $k$ such that $kD$ is integral, and by setting $\mathrm{vol}(D):=\frac{1}{k^{m}}\mathrm{vol}(kD)$. This definition does not depend on the integer $k$ we have chosen (see Lemma 2.2.38 on \cite{Laz}). One can extend the notion of volume to every $\mathbb{R}$-divisor. In particular, pick $D \in \mathrm{Div}_{\mathbb{R}}(Y)=\mathrm{Div}(Y)\otimes_{\mathbb{Z}} \mathbb{R}$ and let $\{D_k\}_k$ be a sequence of $\mathbb{Q}$-divisors converging to $D$ in $N^1(Y)_{\mathbb{R}}$. We define
\[
\mathrm{vol}(D):=\lim_{k \to \infty}\mathrm{vol}(D_k).
\]
This number is independent of the choice of the sequence $\{D_k\}_k$ (see Theorem 2.2.44 of \cite{Laz}). Moreover, two numerically equivalent divisors have the same volume (see Proposition 2.2.41 of \cite{Laz}), and so the volume can be safely studied in $N^1(Y)_{\mathbb{R}}$, and by Theorem 2.2.44 of \cite{Laz} it defines a continuous function $\mathrm{vol}(-) \colon N^1(Y)_{\mathbb{R}} \to \mathbb{R}$, called \textit{volume function}.

\subsection{\textit{Cones associated with IHS manifolds}}
One of the main characters of this paper will be the following cone in $N^1(X)_{\mathbb{R}}$.
\begin{defn}
The $q_X$-nef cone of $X$ is
\[
\mathrm{Nef}_{q_X}(X):=\{\alpha \in N^1(X)_{\mathbb{R}} \; | \; q_X(\alpha,D')\geq 0 \mathrm{\; for\; each \; prime \; divisor \; } D'\}.
\]
\end{defn}
This is a closed cone in $N^1(X)_{\mathbb{R}}$ as it is by definition the intersection of all the closed half-spaces of the form
\[
D^{\geq 0}:=\{\alpha \in N^1(X)_{\mathbb{R}} \; | \; q_X(\alpha,D)\geq 0\},
\]
where $D$ is a prime divisor on $X$.  Moreover, it is non-empty, because it contains $0$.
Given a divisor $D$ on $X$, we will frequently need to consider in $N^1(X)_{\mathbb{R}}$ the hyperplane of classes that are $q_X$-orthogonal to $D$, i.e.
\[
D^{\perp}:=\left\{\alpha \in N^1(X)_{\mathbb{R}} \; | \; q_X(\alpha,D)=0\right\}.
\]
Note that $D^{\perp}$ is really an hyperplane, because $q_X$ is non-degenerate on $N^1(X)_{\mathbb{R}}$.

We recall that the positive cone $\mathscr{C}_X \subset H^{1,1}(X,\mathbb{R}):=H^{1,1}(X,\mathbb{C})\cap H^2(X,\mathbb{R})$ is the connected component of the set $\{\alpha \in H^{1,1}(X,\mathbb{R}) \; | \; q_X(\alpha)>0\}$ containing the K\"ahler cone $\mathscr{K}_X$. 
\begin{defn}
The fundamental exceptional chamber of the positive cone $\mathscr{C}_X$ is defined as
\[
\mathscr{FE}_X:= \left\{\alpha \in \mathscr{C}_X \; | \; q_X(\alpha,[D])>0 \mathrm{\;  for \; every \;} q_X\mathrm{-exceptional} \mathrm{\; prime\; divisor\;} D \right\}.
\]
\end{defn}

In the above definition, the adjective "exceptional" comes from Definition 5.10 in \cite{Mark}.

\begin{rmk}\label{qnefrmk1}
Note that $\mathrm{Nef}_{q_X}(X)=\overline{\mathscr{FE}_X} \cap N^1(X)_{\mathbb{R}}$. Indeed 
\[
\overline{\mathscr{FE}_X}=\left\{\alpha \in \mathscr{C}_X \;|\; q_X(\alpha,E)\geq 0 \text{ for any $q_X$-exceptional prime divisor }E\right\},
\]
and $q_X$ is an intersection product. Hence we are done.
\end{rmk}

Let $f \colon X \DashedArrow[->,densely dashed    ] X'$ be a birational map, where $X'$ is another projective IHS manifold. Recall that $f$ restricts to an isomorphism $f \colon U \to U'$, where $\mathrm{codim}_{X}(X \setminus U)$, $\mathrm{codim}_{X'}(X' \setminus U')\geq 2$ and $X\setminus U$, $X' \setminus U'$ are analytic subsets of $X$ and $X'$ respectively (see for example paragraph 4.4 in \cite{Huy1}). Using the long exact sequence in cohomology with compact support and Poincaré duality, one has the usual chain of isomorphisms
\[
H^2(X, \mathbb{R}) \cong H^2(U, \mathbb{R}) \cong H^2(U', \mathbb{R}) \cong H^2(X',\mathbb{R}),
\]
and the composition is an isometry with respect to $q_X$ and $q_{X'}$ (see for example Proposition I.6.2 of \cite{OGrady} for a proof), hence its restriction to $H^{1,1}(X, \mathbb{R})$ induces an isometry $H^{1,1}(X, \mathbb{R}) \cong H^{1,1}(X', \mathbb{R})$, which we will denote by $f_{*}$ (push-forward). We will denote the inverse of $f_{*}$ by $f^{*}$ (pull-back). Note that $f^{*}$ behaves like the pull-back of line bundles via morphisms in the usual sense because $X$ and $X'$ are isomorphic in codimension one.
\begin{defn}
The birational K\"ahler cone $\mathscr{BK}_X$ is defined as
\[
\mathscr{BK}_X=\bigcup_{f} f^{*}\mathscr{K}_{X'},
\]
where $f$ varies among all the birational maps $f \colon X \DashedArrow[->,densely dashed    ] X'$, where $X'$ is another projective IHS manifold.
\end{defn}
By Proposition 5.6 in \cite{Mark} the closure of $\mathscr{FE}_X$ and $\mathscr{BK}_X$ in $\mathscr{C}_X$ coincide. Hence also $\overline{\mathscr{FE}_X}=\overline{\mathscr{BK}_X}$, where both the closures are taken in $H^{1,1}(X,\mathbb{R})$.

We need to introduce another cone in the N\'{e}ron-Severi space. 
\begin{defn}
\begin{enumerate}
\item  A movable line bundle on $X$ is a line bundle $L$ such that the linear series $|mL|$ has no divisorial components in its base locus, for $m$ sufficiently large and divisible. A divisor $D$ is movable if $\mathscr{O}_X(D)$ is.
\item We define the movable cone $\mathrm{Mov}(X)$ in $N^1(X)_{\mathbb{R}}$ as the convex cone generated by the classes of movable divisors. 
\end{enumerate} 
\end{defn}

\begin{rmk}\label{movableconeisqnefcone}
It is known that 
\[
\overline{\mathrm{Mov}(X)}=\overline{\mathscr{BK}_X}\cap N^1(X)_{\mathbb{R}}=\overline{\mathscr{FE}_X}\cap N^1(X)_{\mathbb{R}}
\] 
(see Theorem 7 and Remark 9 in \cite{Hassett}), where the closure of $\mathrm{Mov}(X)$ is taken in $N^1(X)_{\mathbb{R}}$.
\end{rmk}

Now we give a useful property of big and $q_X$-nef divisors on a projective IHS manifold.

\begin{lem}\label{bigqnefispositive}
If $D$ is a big and $q_X$-nef integral divisor, then $q_X(D)>0$. 
\begin{proof}
Since $D$ is big, we can write $D\equiv A+N$, with $A$ ample and $N$ effective. Then $q_X(D)=q_X(D,A)+q_X(D,N)$. Now, $q_X(D,A)>0$, because the class of $D$ in $N^1(X)_{\mathbf{R}}$ is effective, and $q_X(D,N)\geq 0$, because $D$ is $q_X$-nef. This concludes the proof.
\end{proof}
\end{lem}
 
 \subsection{\textit{A Hodge index-type Theorem for IHS manifolds}}
 In what follows we provide a version of the \textit{strong Hodge-index Theorem} for projective IHS manifolds, as we did not find it in the literature and it plays an important role in the present paper. We start with the following lemma of linear algebra.
\begin{lem}\label{lemsignature}
Let $q$ be a quadratic form of signature $(1,\rho -1)$ on a finite dimensional, real vector space $V$, with $\mathrm{dim}(V)= \rho$. Suppose that $\pi$ is a $2$-plane containing a positive direction. Then the signature of $q_{| \pi}$ is $(1,1)$.
\begin{proof}
Since $\pi$ contains a positive direction, the signature of $q_{|\pi}$ is $(1,1)$ or $(1,0)$ (so in the second case $q_{|\pi}$ is degenerate). If $q_{|\pi}$ is degenerate then $\pi$ contains no negative directions. But this is absurd because $\pi$ intersects all the hyperplanes on which $q_X$ is negative-definite (and we have one by hypothesis). Thus the signature is $(1,1)$.
\end{proof}
\end{lem}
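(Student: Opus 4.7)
The plan is to use Sylvester's law of inertia to enumerate the possible signatures of $q|_\pi$ on the $2$-plane and then rule out all but $(1,1)$ using the hypothesis and a dimension count.

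First I would list the options: for a quadratic form on a real vector space of dimension $2$, the possible signatures are the non-degenerate $(2,0)$, $(1,1)$, $(0,2)$, and the degenerate $(1,0)$, $(0,1)$, $(0,0)$. The hypothesis that $\pi$ contains a positive direction immediately eliminates $(0,2)$, $(0,1)$, and $(0,0)$, since in each of those cases $q|_\pi \le 0$ on all of $\pi$.

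Next I would rule out $(2,0)$. If $q|_\pi$ were positive-definite, then $\pi$ would be a $2$-dimensional positive-definite subspace of $V$; but the positive index of $q$ on $V$ is $1$, so by Sylvester's law no subspace on which $q$ is positive-definite can exceed dimension $1$. Contradiction.

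The last case to exclude is the degenerate $(1,0)$, and this is the only step requiring a genuine (though easy) argument. Choose a maximal negative-definite subspace $W \subset V$; since the negative index of $q$ is $\rho-1$, we have $\dim W = \rho - 1$. A standard dimension count gives
\[
\dim(\pi \cap W) \;\ge\; \dim \pi + \dim W - \dim V \;=\; 2 + (\rho-1) - \rho \;=\; 1,
\]
so $\pi \cap W$ contains a nonzero vector $v$ on which $q(v) < 0$ (because $q$ is negative-definite on $W$). This produces a negative direction in $\pi$, contradicting the assumption that $q|_\pi$ has signature $(1,0)$. The only remaining possibility is $(1,1)$, as desired. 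The main (and only) obstacle is the dimension-counting argument in the last step, which handles the degenerate case that is not ruled out by the signature hypothesis alone.
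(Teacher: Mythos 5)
Your proof is correct and follows essentially the same route as the paper: list the possible signatures, rule out all but $(1,1)$ and the degenerate $(1,0)$, and then exclude $(1,0)$ by intersecting $\pi$ with a negative-definite hyperplane via a dimension count. You spell out the exclusion of $(2,0)$ (via the positive index of $q$) explicitly, which the paper leaves implicit, but the key step is identical.
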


\begin{prop}[Hodge-type inequality]\label{propsignature}
Let $D,D'$ be two integral divisors on $X$, such that $q_X([D])>0$. Then
\begin{enumerate}
\item $(q_X(D,D'))^2\geq q_X(D)q_X(D')$,
\item Equality in the previous item holds if and only if $D$ and $D'$ are linearly dependent in $N^1(X)_{\mathbb{Q}}$, i.e. if and only if $rD \equiv_{\mathrm{num}}  D'$, for some $r \in \mathbb{Q}$, and so if and only if $aD \equiv_{\mathrm{num}} bD'$ for some $a,b \in \mathbb{Z}$.
\end{enumerate}
\end{prop}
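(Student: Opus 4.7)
The plan is to emulate the classical Hodge index theorem for surfaces, using Lemma \ref{lemsignature} to reduce everything to the Gram matrix of $D, D'$ in the $2$-plane they span.

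First I would identify the signature of the restriction of $q_X$ to $N^1(X)_{\mathbb{R}}$. By Hodge--Riemann, the signature of $q_X$ on $H^{1,1}(X,\mathbb{R})$ is $(1, h^{1,1}(X)-1)$ (obtained from the $(3, b_2(X)-3)$ signature on $H^2(X,\mathbb{R})$ by removing the positive-definite plane $(H^{2,0} \oplus H^{0,2})_{\mathbb{R}}$). Since $X$ is projective, $N^1(X)_{\mathbb{R}} \subset H^{1,1}(X,\mathbb{R})$ contains the class of an ample divisor $H$, and $[H] \in \mathscr{K}_X \subset \mathscr{C}_X$ gives $q_X(H) > 0$. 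A linear algebra argument (an orthogonal subspace of a space with signature $(1,k)$ containing a positive vector has signature $(1, \dim - 1)$) then forces $q_X$ to have signature $(1, \rho(X) - 1)$ on $N^1(X)_{\mathbb{R}}$.

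Next, I would dispose of the linearly dependent case directly: if $D$ and $D'$ are linearly dependent in $N^1(X)_{\mathbb{R}}$, write $D' \equiv_{\mathrm{num}} cD$ with $c \in \mathbb{R}$; since both are integral classes spanning a $1$-dimensional $\mathbb{Q}$-subspace (as both are rational), $c$ may be taken in $\mathbb{Q}$, and then $q_X(D,D')^2 = c^2 q_X(D)^2 = q_X(D) q_X(D')$, giving equality. For the main case, assume $D, D'$ linearly independent and let $\pi \subset N^1(X)_{\mathbb{R}}$ be the $2$-plane they span. Since $q_X(D) > 0$, $\pi$ contains a positive direction, so by Lemma \ref{lemsignature} the restriction $q_X|_\pi$ has signature $(1,1)$. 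The Gram matrix of $\{D, D'\}$ therefore has strictly negative determinant, i.e.
\[
q_X(D) q_X(D') - q_X(D, D')^2 < 0,
\]
which is the strict version of inequality (1).

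Putting these together yields both (1) and (2): the inequality holds in all cases, and equality forces the plane $\pi$ to be degenerate, which by the independence dichotomy above can only happen when $D$ and $D'$ are proportional in $N^1(X)_{\mathbb{Q}}$; clearing denominators gives the integer relation $aD \equiv_{\mathrm{num}} bD'$. The only real obstacle is carefully verifying the signature reduction from $H^2(X,\mathbb{R})$ down to $N^1(X)_{\mathbb{R}}$; once that is in hand the rest is the standard two-line Hodge index argument packaged through Lemma \ref{lemsignature}.
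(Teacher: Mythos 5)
Your proposal is correct and takes essentially the same route as the paper: both pass to the $2$-plane $\pi = \langle [D],[D']\rangle$, apply Lemma \ref{lemsignature} to conclude that $q_X|_\pi$ has signature $(1,1)$ precisely when $\dim\pi = 2$, and read off inequality (1) and the equality criterion (2) from the sign of $\det\begin{pmatrix} q_X(D) & q_X(D,D') \\ q_X(D,D') & q_X(D') \end{pmatrix}$, then descend the linear dependence from $N^1(X)_{\mathbb{R}}$ to $N^1(X)_{\mathbb{Q}}$ using integrality. The only cosmetic difference is that you spell out the reduction of the signature $(3,b_2-3)$ on $H^2(X,\mathbb{R})$ to $(1,\rho-1)$ on $N^1(X)_{\mathbb{R}}$, which the paper leaves implicit as a precondition for invoking Lemma \ref{lemsignature}, and you split the dependent/independent cases a bit more explicitly than the paper (which just sets aside $[D']=0$); neither change alters the substance.
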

\begin{proof}
If $[D']=0$ there is nothing to prove. So we can assume $[D'] \neq 0$.
\begin{enumerate}\item Consider the subspace $\pi:= \left<[D],[D']\right> \subset N^1(X)_{\mathbb{R}}$. 
The determinant of the matrix representing $q_X$ restricted to $\pi$ is $d:=q_X(D)q_X(D')-q_X(D,D')^2$ and hence the signature of $(q_X)_{|\pi}$ is $(1,1)$ if and only if $d<0$, i.e. $(q_X(D,D'))^2> q_X(D)q_X(D')$. The equality $d=0$ holds if and only if $\mathrm{dim}(\pi)=1$, and we are done.
\item By the previous item we have $(q_X(D,D'))^2= q_X(D)q_X(D')$ if and only if $[D]$ and $[D']$ are linearly dependent in $N^1(X)_{\mathbb{R}}$. But $[D],[D'] \in \mathrm{Pic}(X)$, thus they must be linearly dependent in $N^1(X)_{\mathbb{Q}}$. It follows that $r[D]= [D']$ for some $0 \neq r \in \mathbb{Q}$, i.e. $a[D] = b[D']$ for some $a,b \in \mathbb{Z} \setminus \{0\}$.
\end{enumerate}
\end{proof}

\begin{cor}\label{stronghodge}
Let $D,D'$ be two integral divisors, and assume $q_X(D)>0$. Then, if $q_X(D,D')=0$, we have $q_X(D')<0$ or $[D'] = 0$.
\end{cor}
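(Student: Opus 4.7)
The plan is to derive the corollary as a direct consequence of the Hodge-type inequality just proved in Proposition \ref{propsignature}. Assume that $q_X(D)>0$ and $q_X(D,D')=0$. By item 1 of Proposition \ref{propsignature},
\[
0 = \bigl(q_X(D,D')\bigr)^2 \geq q_X(D)\,q_X(D'),
\]
and since $q_X(D)>0$ this immediately forces $q_X(D')\leq 0$. So the only possibility to rule out is $q_X(D')=0$ with $[D']\neq 0$.

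Suppose for contradiction that $q_X(D')=0$ and $[D']\neq 0$. Then the inequality above is an equality, so item 2 of Proposition \ref{propsignature} yields $a[D]=b[D']$ in $N^1(X)_{\mathbb{Q}}$ for some integers $a,b$ not both zero. Applying $q_X$ to both sides gives $a^2 q_X(D) = b^2 q_X(D')=0$. Since $q_X(D)>0$, we deduce $a=0$, and then $b[D']=0$ with $[D']\neq 0$ forces $b=0$ as well, contradicting the non-triviality of the linear dependence. Hence $q_X(D')=0$ implies $[D']=0$, which completes the proof.

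The argument is essentially a two-line application of Proposition \ref{propsignature}, so there is no real obstacle; the only subtlety is making sure the equality case of the Hodge-type inequality is invoked correctly to eliminate the possibility of a non-zero isotropic $[D']$ orthogonal to a class of positive square.
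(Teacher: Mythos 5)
Your argument is correct and follows essentially the same route as the paper: apply item 1 of Proposition \ref{propsignature} to obtain $q_X(D')\leq 0$, then assume $q_X(D')=0$ with $[D']\neq 0$ and use item 2 to extract an integral linear dependence, from which a contradiction follows by applying $q_X$. The only cosmetic difference is in how the final contradiction is phrased (the paper invokes torsion-freeness of $N^1(X)$ to conclude $[D']=0$, while you contradict the nontriviality of the relation), but the substance is identical.
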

\begin{proof}
By item $1$ of Proposition \ref{propsignature}, we have
\[
0=(q_X([D],[D']))^2 \geq q_X(D)q_X(D').
\]
Hence, since $q_X([D])>0$, we have $q_X(D') \leq 0$. By contradiction, suppose that $q_X([D'])=0$, but $[D']\neq 0$. As in this case
\[
(q_X([D],[D']))^2=0=q_X([D])q_X([D']),
\] 
by item 2 of Proposition \ref{propsignature}, we have $b[D] = a[D']$, for some $a,b \in \mathbb{Z} \setminus \{0\}$.  Thus
\[
b^2q_X([D])=a^2q_X([D'])=0,
\]
and so it must hold $b=0$. Then $a[D'] = 0$, and since $N^1(X)$ is torsion free we must have $[D']=0$, which is clearly a contradiction.
\end{proof}

\begin{cor}[Strong Hodge-index Theorem]\label{stronghodgeindex}
Let $D$ be an integral divisor on $X$, with $q_X(D)>0$. Consider the decomposition $N^1(X)_{\mathbb{R}}=\mathbb{R}[D]\oplus D^{\perp}$. Then $q_X$ is negative-definite on the hyperplane $[D]^{\perp}$.
\end{cor}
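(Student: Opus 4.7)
The plan is to reduce the statement to a purely linear-algebraic fact about signatures of orthogonal decompositions, using the known signature of $q_X$ on $N^1(X)_{\mathbb{R}}$. First I would record that $q_X$ has signature $(1,\rho-1)$ on $N^1(X)_{\mathbb{R}}$, where $\rho:=\dim_{\mathbb{R}}N^1(X)_{\mathbb{R}}$. This is the input: on $H^2(X,\mathbb{R})$ the form has signature $(3,b_2(X)-3)$, the two extra positive directions in the Hodge component $H^{2,0}\oplus H^{0,2}$ get absorbed by $\mathrm{Re}(\sigma)$ and $\mathrm{Im}(\sigma)$ via Theorem \ref{fujikirelation}, leaving signature $(1,h^{1,1}(X)-1)$ on $H^{1,1}(X,\mathbb{R})$, and finally the existence of an ample class in $N^1(X)_{\mathbb{R}}$ (which has positive $q_X$-square by Theorem \ref{fujikirelation}) together with the ambient signature forces $q_X$ restricted to $N^1(X)_{\mathbb{R}}$ to have signature $(1,\rho-1)$. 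This is the very same signature already used implicitly in Lemma \ref{lemsignature}.

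Next I would exploit the hypothesis $q_X([D])>0$ to get the orthogonal decomposition for free: $\mathbb{R}[D]$ is a non-degenerate, positive-definite line, so $\mathbb{R}[D]\cap [D]^{\perp}=\{0\}$ and $N^1(X)_{\mathbb{R}}=\mathbb{R}[D]\oplus [D]^{\perp}$. At this point the conclusion is immediate from additivity of signatures in orthogonal direct sums: since the total signature is $(1,\rho-1)$ and the sole positive direction is already used up by $\mathbb{R}[D]$, the restriction of $q_X$ to $[D]^{\perp}$ must have signature $(0,\rho-1)$, i.e.\ be negative-definite.

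The main (and essentially only) obstacle is the input on the signature of $q_X$ on $N^1(X)_{\mathbb{R}}$, which the paper treats as background; once one grants it, the corollary is pure linear algebra. An alternative, more hands-on route would be to start from Corollary \ref{stronghodge}, which for \emph{integral} $[D']\in [D]^{\perp}\setminus\{0\}$ already gives $q_X([D'])<0$, extend this to $\mathbb{Q}$-classes by clearing denominators, and then try to pass to $\mathbb{R}$-classes by density. However, a pure density argument breaks down because $\{q_X<0\}$ is open but not closed, so a limit of rational classes with strictly negative square could have $q_X=0$ in the limit; in particular one cannot a priori rule out an isotropic real class in $[D]^{\perp}$ without knowing the signature. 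The signature argument sketched above bypasses this difficulty in one line, and that is why I would prefer it as the proof.
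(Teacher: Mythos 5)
Your proof is correct, but it takes a genuinely different route from the paper's. The paper proceeds internally: it invokes Corollary \ref{stronghodge} to get strict negativity on every non-zero \emph{integral} class in $D^{\perp}$, extends to rational classes by clearing denominators, and then passes to $\mathbb{R}$-classes not by density (which, as you rightly note, would fail because $\{q_X<0\}$ is open but not closed) but by fixing an integral basis $\mathcal{B}'$ of $D^{\perp}$: the Gram matrix of $q_X$ in this basis has rational entries, so negative-definiteness on the $\mathbb{Q}$-span of $\mathcal{B}'$ already forces the Gram matrix to be negative-definite as a real matrix (its possible radical would be a rational subspace, and a positive eigenvalue would be detected by nearby rational vectors), hence negative-definite on all of $D^{\perp}$. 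Your approach instead goes straight to the signature $(1,\rho-1)$ of $q_X$ on $N^1(X)_{\mathbb{R}}$ and reads off the conclusion from additivity of signature in the orthogonal decomposition $N^1(X)_{\mathbb{R}}=\mathbb{R}[D]\oplus D^{\perp}$; one should add, when justifying the signature claim, that the restriction of $q_X$ to $N^1(X)_{\mathbb{R}}$ is non-degenerate (the paper records this fact earlier), so no degenerate part can hide in the count, but granting that, your derivation is cleaner and shorter. The trade-off is that the paper's route stays within the Hodge-index machinery it just developed (Proposition \ref{propsignature}, Corollary \ref{stronghodge}) and avoids importing the signature on $H^2(X,\mathbb{R})$ as an external input, whereas your route is a one-line reduction to a standard linear-algebra fact once that signature is in hand.

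Your critique of the naive density argument is sound and well taken; it just turns out not to be what the paper actually does.
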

\begin{proof}
By Corollary \ref{stronghodge} we have that $q_X(D')<0$ for every non-zero rational class  $[D'] \in D^{\perp}$. It remains to be proved that the same is true for each non-zero real class $\alpha$ $q_X$-orthogonal to $D$. Any basis $\mathcal{B}$ for $H^{1,1}(X,\mathbb{Z})$ naturally gives an integral basis $\mathcal{B}'$ of $D^{\perp}$. The $\mathbb{Q}$-vector space $D^{\perp}_{\mathbb{Q}}$ spanned by $\mathcal{B}'$ in $D^{\perp}$ is the set rational points of $D^{\perp}$. But $q_X$ is negative-definite on $D^{\perp}_{\mathbb{Q}}$, and $\mathcal{B}'$ is also a basis for $D^{\perp}$, hence $q_X$ is negative-definite on $D^{\perp}$, and this concludes the proof.
\end{proof}

\section{A note on the pseudo-effective cone}
In this section, we determine the structure of the pseudo-effective cone $\overline{\mathrm{Eff}(X)}$. 

\begin{rmk}\label{rmk6} We observe the following:
\begin{enumerate}
\item If $D$ is an effective and big $\mathbb{R}$-divisor on $X$, then $P(D) >0$. Indeed $D \equiv_{\mathrm{num}} A+N$, for some $A$ ample and $N$ effective. Then $P(D) \geq A >0$, as $P(D)$ is the maximal $q_X$-nef subdivisor of $D$ (see Remark \ref{rmkpac}). It follows that a $q_X$-exceptional, effective $\mathbb{R}$-divisor cannot be big and that the class of a $q_X$-exceptional prime divisor $D'$ belongs to the boundary of $\overline{\mathrm{Eff}(X)}$.

\item If $E$ is a prime exceptional divisor, then $nE$ is fixed (i.e. $h^0(nE)=1$) for every $n>0$. Indeed, if $nE$ were not fixed, there would be an effective divisor $D$ linearly equivalent to $mE$ and not containing $E$ in its support, for some $0<m\leq n$. Then we would have $0\leq mq_X(D,E)=m^2q_X(E,E)<0$, which is absurd.
\item Let $D$ be a prime divisor with $q_X(D)=0$, then $[D] \in \partial \overline{\mathrm{Eff}(X)}$. Indeed, as $q_X$ is an intersection product, $D$ is $q_X$-nef. If it were also big, then we would have $q_X(D)>0$ by Lemma \ref{bigqnefispositive}. Thus $[D]$ lies on the boundary of $\overline{\mathrm{Eff}(X)}$.
\end{enumerate}
\end{rmk}
Kleiman's criterion for amplitude on a projective variety inspired the proof of the following proposition, which is interesting on its own.
\begin{prop}\label{lemample}
Let $A$ be an ample divisor on $X$. Then  the linear functional 
\[
q_X(-,A) \colon N^1(X)_{\mathbb{R}} \to \mathbb{R}, \, \mathrm{s.t.} \; q_X(-,A)(\alpha)=q_X(\alpha,A),
\] 
is strictly positive on $\overline{\mathrm{Eff}(X)} \setminus \{0\}$. In particular $\overline{\mathrm{Eff}(X)}$ does not contain any line.
\end{prop}
\begin{proof}
As $[A]$ is K\"ahler, $q_X(-,A)$ is strictly positive on $\mathrm{Eff}(X) \setminus \{0\}$, hence it is nonnegative on $\overline{\mathrm{Eff}(X)}$. Now, assume that $q_X(\alpha,A)=0$ for some pseudo-effective class $\alpha \neq 0$. We note that there exists an integral divisor $D$ such that $q_X(\alpha,D)<0$. Indeed $q_X$ is non-degenerate, and so we can find a class $\beta$ satisfying $q_X(\alpha,\beta)\neq 0$. Without loss of generality, we can assume $q_X(\alpha,\beta)< 0$. If $\mathrm{Pic}(X) \subset \alpha^{\geq 0}$, then $N^1(X)_{\mathbb{R}}= \alpha^{\geq 0}$, which is clearly absurd. Now observe that $D+nA$ is ample for $n\gg 1$, hence we have
\[
0\leq q_X(\alpha,D+nA)=q_X(\alpha,D)<0,
\]
which is a contradiction. Now we show that $\overline{\mathrm{Eff}(X)}$ does not contain lines. Let $l \subset \overline{\mathrm{Eff}(X)}$ be a line. We can assume that $l$ passes through the origin. Let $0\neq \alpha \in l$ be an element. Then $-\alpha$ is also pseudo-effective and so $q_X(t \alpha+(1-t)(-\alpha),A)>0$ for each $t \in [0,1]$. This is absurd, as $0$ belongs to the segment joining $\alpha$ and $-\alpha$.
\end{proof}
 Note that $\overline{\mathrm{Eff}(Y)}$ does not contain lines for any projective variety $Y$. If the variety is smooth, the mentioned result is a particular case of Proposition 1.3 from \cite{Favre}. For a proof which works also in the non-smooth case, see Lemma 4.6 of \cite{Mustata}.

\begin{rmk}\label{rmkpositiveconeinbigcone}
The following inclusion holds true 
\begin{equation}\tag{$\diamond$}\label{equationdiamond}
\mathscr{C}_X\cap N^1(X)_{\mathbb{R}} \subset \mathrm{Big}(X).
\end{equation}
 Indeed, by Proposition 3.8 of \cite{Huy1} and Proposition 1.4 of \cite{Dem}, we have that
 \begin{equation}\tag{\#}\label{eqposcone}
 \mathscr{C}_X\cap N^1(X)_{\mathbb{R}} \subset \overline{\mathrm{Eff}(X)}.
 \end{equation}
 Taking the interiors (in $N^1(X)_{\mathbb{R}}$) of both the members of (\ref{eqposcone}) we obtain (\ref{equationdiamond}).
\end{rmk}

Using again Proposition 1.4 of \cite{Dem}, and Theorem 3.19 of \cite{Bouck}, one obtains the following.

\begin{cor}\label{pseflocpol}
The pseudo-effective cone $\overline{\mathrm{Eff}(X)}$ is locally polyhedral far from $\partial \mathrm{Nef}_{q_X}(X) \cap \partial \overline{\mathrm{Eff}(X)}$, with extremal rays spanned by (the classes of) $q_X$-exceptional prime divisors. 
\end{cor}

 Corollary \ref{pseflocpol} implies that if $q_X(\alpha)<0$, with $\alpha \in \overline{\mathrm{Eff}(X)}$, then $\mathbb{R}^{ \geq 0} \alpha$ is an extremal ray of $\overline{\mathrm{Eff}(X)}$ if and only if $\mathbb{R}^{\geq 0} \alpha = \mathbb{R}^{\geq 0} [D]$, for some $q_X$-exceptional prime divisor $D$. 

The next result was inspired by Theorem 4.13 of \cite{Kollar}.

\begin{cor}\label{structurepseff}
We have
\[
\overline{\mathrm{Eff}(X)}=\overline{\mathscr{C}_X}\cap N^1(X)_{\mathbb{R}}+\sum_D \mathbb{R}^{\geq 0}[D],
\]
where the sum is over all the $q_X$-exceptional prime divisors.
\begin{proof}
The inclusion $\supseteq$ is obvious. It remains to show $\subseteq$, and it is sufficient to prove that every extremal ray of $\overline{\mathrm{Eff}(X)}$ is contained in the right-hand side of the equality in the statement (indeed $\overline{\mathrm{Eff}(X)}$ does not contain lines, thus by Minkowsky's Theorem it is spanned by its extremal rays).  Let $\mathbb{R}^{\geq 0}\alpha$ be an extremal ray. If $q_X(\alpha)\geq 0$, then $\alpha \in \overline{\mathscr{C}_X}$. If $q_X(\alpha)<0$, by Corollary \ref{pseflocpol} there exists a $q_X$-exceptional prime divisor $D$ such that $\mathbb{R}^{\geq 0}\alpha=\mathbb{R}^{\geq 0}[D]$, hence we are done.
\end{proof}
\end{cor}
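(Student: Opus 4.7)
The plan is to prove the two inclusions separately. The inclusion $\supseteq$ is essentially free: the positive cone sits inside the pseudo-effective cone (this is embedded in the remark preceding the statement, specifically equation (\#) in Remark \ref{rmkpositiveconeinbigcone}), so $\overline{\mathscr{C}_X}\cap N^1(X)_{\mathbb{R}} \subseteq \overline{\mathrm{Eff}(X)}$, and every $q_X$-exceptional prime divisor is by definition effective. Hence the right hand side is contained in $\overline{\mathrm{Eff}(X)}$, which is convex.

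For $\subseteq$, the idea is to reduce to an extremal ray analysis. By Lemma \ref{lemample}, the closed convex cone $\overline{\mathrm{Eff}(X)}$ contains no line, so by Minkowski's theorem (applied in the finite-dimensional space $N^1(X)_{\mathbb{R}}$) it is the closed convex hull of its extremal rays. So it is enough to show that each extremal ray $\mathbb{R}^{\geq 0}\alpha$ of $\overline{\mathrm{Eff}(X)}$ lies in the right hand side.

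Given an extremal generator $\alpha$, I would split according to the sign of $q_X(\alpha)$. If $q_X(\alpha) < 0$, then Lemma \ref{extraylemma} applies directly and gives $\alpha \in \mathbb{R}^{\geq 0}[D]$ for some prime divisor $D$; inspecting the proof of that lemma shows $q_X(D)<0$, so $D$ is $q_X$-exceptional and we are done. If $q_X(\alpha)\geq 0$, then I would argue that $\alpha \in \overline{\mathscr{C}_X}$: pick any ample class $A$; by Lemma \ref{lemample} we have $q_X(\alpha,A)>0$, so for every $\epsilon>0$ the perturbation $\alpha+\epsilon A$ satisfies $q_X(\alpha+\epsilon A)= q_X(\alpha)+2\epsilon q_X(\alpha,A)+\epsilon^{2}q_X(A)>0$ and is in the same connected component of $\{q_X>0\}$ as $A$ (since its $q_X$-pairing with the Kähler class $A$ is positive, using the standard signature fact that classes of positive square in a Lorentzian-type form lie in the same component iff their pairing is positive). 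Thus $\alpha+\epsilon A \in \mathscr{C}_X$ for all $\epsilon>0$, and letting $\epsilon\to 0$ gives $\alpha \in \overline{\mathscr{C}_X}\cap N^1(X)_{\mathbb{R}}$.

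The main obstacle I anticipate is the case $q_X(\alpha)\geq 0$: one has to be careful that $\alpha$ really lands in the distinguished component $\mathscr{C}_X$ of the positive cone (and not the opposite one), and this is precisely where the positivity of $q_X(\alpha,A)$ supplied by Lemma \ref{lemample} plays its decisive role. Everything else is a direct application of previously established results (Minkowski, Lemma \ref{extraylemma}, Lemma \ref{lemample}), so modulo this signature bookkeeping the proof is short.
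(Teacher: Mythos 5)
Your proof is correct and follows exactly the same route as the paper: reduce to extremal rays using Lemma \ref{lemample} and Minkowski's theorem, then split according to the sign of $q_X(\alpha)$, invoking Lemma \ref{extraylemma} for the negative case. You in fact supply a justification the paper omits for the case $q_X(\alpha)\geq 0$: the paper simply asserts $\alpha\in\overline{\mathscr{C}_X}$, whereas you correctly point out that the positivity $q_X(\alpha,A)>0$ furnished by Lemma \ref{lemample} is what rules out the opposite component of $\{q_X>0\}$.
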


 Note that if $X$ has Picard number $\rho(X) \geq 2$ and $\beta \in \overline{\mathscr{C}_X}$ spans an extremal ray of $\overline{\mathrm{Eff}(X)}$, then $q_X(\beta)=0$. Also, notice that $q_X(\beta)=0$ does not necessarily imply that $\beta$ spans an extremal ray of $\overline{\mathrm{Eff}(X)}$ (see Example \ref{exchambers}).

\section{Boucksom-Zariski chambers}
The goal of this section is to prove item $1$ of Theorem \ref{mainthm1}. We start with the following useful result.

\begin{lem}[item (iii) of Proposition 3.8 in \cite{Bouck}]\label{positivepartisbig}
Let $D$ be an effective and big $\mathbb{R}$-divisor on $X$, and $D=P(D)+N(D)$ its divisorial Zariski decomposition. Then $P(D)$ is a big and $q_X$-nef divisor.
\end{lem}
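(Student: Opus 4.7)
The plan is to observe that $q_X$-nefness of $P(D)$ is already built into the Boucksom-Zariski decomposition (Theorem \ref{thm1}), so all the content lies in showing that $P(D)$ is big. The natural strategy is to exploit the maximality of the positive part: any $q_X$-nef subdivisor of $D$ is dominated by $P(D)$, so if we can ``sneak'' an ample class into a subdivisor, then $P(D)$ must contain it.

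More precisely, here is what I would do. First, by Kodaira's lemma, the bigness of $D$ gives a numerical equivalence
\[
D \equiv_{\mathrm{num}} A + F,
\]
where $A$ is an ample $\mathbb{Q}$-divisor and $F$ is an effective $\mathbb{R}$-divisor. Set $D' := A + F$, which is an effective $\mathbb{R}$-divisor with the same class as $D$. Writing $A$ and $F$ with a common set of prime components, we see that $A$ is literally a subdivisor of $D'$ in the sense used in the proof of Theorem \ref{thm1}. Next, I would check that $A$ is $q_X$-nef: this follows from Lemma \ref{lemample} applied with the roles of the ample class and an arbitrary prime divisor $E$ swapped (using the symmetry of $q_X$), or equivalently from the inclusion $\mathscr{K}_X \subseteq \overline{\mathscr{FE}_X}$ combined with Remark \ref{qnefrmk1}.

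Now the maximality property of $P(D')$ in the set $\mathcal{M}_{D'}$ of $q_X$-nef subdivisors of $D'$ forces $A \preceq P(D')$, i.e.\ $P(D') - A$ is effective. Therefore
\[
P(D') \;=\; A + \bigl(P(D') - A\bigr)
\]
is the sum of an ample class and an effective class; in particular $P(D')$ is big. Finally, by Remark \ref{rmk3} the Boucksom-Zariski decomposition respects numerical equivalence, so $P(D) \equiv_{\mathrm{num}} P(D')$, and bigness being a numerical property, this gives that $P(D)$ is big as required.

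The only step that needs any real care is the identification of $A$ as a genuine subdivisor of $D'$ to which the maximality of $P(D')$ can be applied; this is purely a matter of presenting $A$ and $F$ with a common support, and is where the shift from $D$ to the numerically equivalent $D' = A+F$ is essential. Once that is done, everything else reduces to combining the maximality from Theorem \ref{thm1}, the easy fact that ample classes are $q_X$-nef, and the numerical invariance in Remark \ref{rmk3}.
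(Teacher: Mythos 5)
Your proof is correct, but it takes a genuinely different route from the paper's. The paper writes $D=\sum_i a_iD_i$ with each $D_i$ an effective and big $\mathbb{Q}$-divisor, invokes Proposition~3.10 of \cite{Pac} to conclude that each $P(D_i)$ is big, observes that $\sum_i a_iP(D_i)$ is a big $q_X$-nef subdivisor of $D$, and then uses the maximality of $P(D)$ to write $P(D)=\sum_i a_iP(D_i)+E$ with $E$ effective. You instead bypass the reduction to the $\mathbb{Q}$-divisor case entirely: you use Kodaira's lemma to produce an effective representative $D'=A+F$ of $[D]$ with $A$ an ample $\mathbb{Q}$-divisor, check that $A$ is a $q_X$-nef subdivisor of $D'$, and let maximality force $A\preceq P(D')$ directly, so $P(D')$ is ample plus effective and hence big; Remark~\ref{rmk3} then transports this to $P(D)$. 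Your argument is arguably cleaner, and it is essentially the argument the paper already sketches in item~(1) of Remark~\ref{rmk6} (where the conclusion is drawn only as $P(D)>0$ rather than bigness); the paper's separate proof in Lemma~\ref{positivepartisbig} seems to avoid this route and relies instead on the cited $\mathbb{Q}$-divisor result. The one small point you gloss over is that Kodaira's lemma lets you choose $A$ to be an \emph{effective} ample $\mathbb{Q}$-divisor (take a sufficiently positive multiple that admits a section), which is what makes $A$ a genuine subdivisor of $D'$ and keeps $D'$ effective; once that is made explicit, the maximality argument applies exactly as in the proof of Theorem~\ref{thm1}, since $P(D')$ dominates every element of $\mathcal{M}_{D'}$ (otherwise the coefficient-wise maximum would be a strictly larger $q_X$-nef subdivisor).
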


\begin{defn}
Given a big $\mathbb{R}$-divisor $D$ on $X$ we define the \textit{$q_X$-null locus} of $D$ as
\[
\mathrm{Null}_{q_X}(D):=\{D' \; | \; D' \mathrm{\;is \;a \;prime\; divisor\; and\; } q_X(D,D')=0\}.
\]
\end{defn}
\begin{rmk}\label{nulllocusfinite}
We note that the $q_X$-null locus of a big $\mathbb{R}$-divisor $D$ is finite. Indeed, let $D'$ be a prime divisor lying in $\mathrm{Null}_{q_X}(D)$. As $D$ is big, we have $[D]=[A]+[N]$ in $N^1(X)_{\mathbb{R}}$, where $A$ is ample and $N$ is effective. As $q_X(A,D')>0$, then $D'$ is an irreducible component of $N$. It immediately follows that the cardinality of $\mathrm{Null}_{q_X}(D)$ is bounded by that of the irreducible components of $N$, and so is finite, because $N$ is a divisor.
\end{rmk}
\begin{defn}
Let $D$ be an effective $\mathbb{R}$-divisor on $X$ and $D=P(D)+N(D)$ its divisorial Zariski decomposition. We define the \textit{$q_X$-negative locus} of $D$ as
\[
\mathrm{Neg}_{q_X}(D):=\{D' \; | \; D' \mathrm{ \;is\; an\; irreducible\; component\; of\; } N(D)\}.
\]
\end{defn}
\begin{rmk}\label{rmk5}
We note that 
\begin{enumerate}
\item given a big  class $\alpha$ and a big  $\mathbb{R}$-divisor $D$ such that $\alpha=[D]$, we can define
\[
\mathrm{Null}_{q_X}(\alpha):=\mathrm{Null}_{q_X}(D),
\] 
because  $q_X$ respects the numerical equivalence relation and so this set does not depend on the representative of $\alpha$ we have chosen,
\item given an effective class $\alpha$ and $D \in \mathrm{Div}_{\mathbb{R}}(X)$ an effective representative of $\alpha$, we can define 
\[
\mathrm{Neg}_{q_X}(\alpha):=\mathrm{Neg}_{q_X}(D),
\] 
because from Remark \ref{rmk3} also this set does not depend on the effective representative of $\alpha$ we have chosen.
\end{enumerate}
\end{rmk}
\begin{rmk}
Let  $D$ be a big $\mathbb{R}$-divisor on $X$, then $\mathrm{Null}_{q_X}(D) \subset \mathrm{Neg}(X)$, where $\mathrm{Neg}(X)$ is the set of all $q_X$-exceptional prime divisors on $X$, i.e.
\[
\mathrm{Neg}(X):=\{D' \; | \; D' \mathrm{\; is \;a \;prime\; divisor\; and\; } q_X(D')<0\}.
\]
Indeed, let $D'$ be a prime divisor such that $q_X(D,D')=0$. Since $D$ is big, then $D \equiv_{\mathrm{num}} A+N$, where $A$ is an ample $\mathbb{R}$-divisor and $N$ is an effective $\mathbb{R}$-divisor (see Proposition 2.2.22 of \cite{Laz}). Thus we obtain
\begin{equation}\label{eq1}
q_X(D,D')=q_X(A+N,D')=q_X(A,N)+q_X(N,D')=0.
\end{equation}
Now, the class of $A$ in $N^1(X)_{\mathbb{R}} \subset H^{1,1}(X,\mathbb{R})$ is K\"ahler, thus $q_X(A,N)>0$.  The last equality in (\ref{eq1}) implies $q_X(N,D')<0$, and since $q_X$ is an intersection product, $D'$ must be one of the irreducible components of $N$, and $q_X(D')<0$.
\end{rmk}
\begin{lem}\label{lem1}
Let $P$ be  a big and $q_X$-nef $\mathbb{R}$-divisor on $X$. Then there is an open neighbourhood $U\subset \mathrm{Big}(X)$ of $[P]$ in $N^1(X)_{\mathbb{R}}$ such that for all classes $[D] \in U$ we have
\[
\mathrm{Null}_{q_X}(D) \subseteq \mathrm{Null}_{q_X}(P).
\]
\begin{proof}
Since the big cone is open, we can choose rational big classes $[D_1],\dots,[D_r]$ in $N^1(X)_{\mathbb{R}}$ such that  $P$ belongs to the interior of the closed cone  $K:=\sum_{i=1}^r \mathbb{R}^{\geq 0}[D_i]$. We can assume that $D_1,\dots,D_r$ are effective divisors, because a big class is effective. If $D \in K$, we can have $q_X(D,D')<0$ only for a finite number of prime divisors $D'$, because $q_X$ is an intersection product. Indeed, if $q_X(D,D')<0$, then $D'$ must be an irreducible component of $D$ . It follows that up to replacing each  $D_i$ with $\nu D_i$, for $\nu>0$ rational and small enough, we can assume that 
\begin{equation}\label{eq2}
q_X(P+D_i,D')>0
\end{equation}
for all the prime divisors $D'$ satisfying $q_X(P,D')>0$.
Now, consider an $\mathbb{R}$-divisor of the form $\sum_{i=1}^r\alpha_i(P+D_i)$, where $(\alpha_1,\dots,\alpha_r) \in \left(\mathbb{R}^{\geq 0}\right)^ r \setminus \left\{\vec{0}\right\}$. We note that if $D'$ is a prime divisor such that
\[
q_X \left(\sum_{i=1}^r\alpha_i(P+D_i),D'\right)=0,
\]
then $D' \in \mathrm{Null}_{q_X}(P)$, because otherwise, using that $P$ is $q_X$-nef and (\ref{eq2}), we would have $q_X(P+D_i,D')>0$ for each $i=1,\dots,r$. Then it is clear that
\[
\mathrm{Null}_{q_X}\left(\sum_{i=1}^r\alpha_i(P+D_i)\right) \subset \mathrm{Null}_{q_X}(P)
\]
for each $r$-tuple $(\alpha_1,\dots,\alpha_r ) \in \left(\mathbb{R}^{\geq 0} \right)^r \setminus \left\{ \vec{0} \right\}$. Therefore the open cone 
\[
U:=\sum_{i=1}^r \mathbb{R}^{> 0}[P+D_i]
\]
is an open neighbourhood of $P$ yielding the conclusion of the Lemma.
\end{proof}
\end{lem}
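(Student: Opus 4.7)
The first thing I would record is that, by Remark \ref{nulllocusfinite}, $\mathrm{Null}_{q_X}(P)$ is a finite set, and, since $P$ is $q_X$-nef, every prime divisor $D'\notin \mathrm{Null}_{q_X}(P)$ satisfies $q_X(P,D')>0$. The goal therefore becomes: find a neighbourhood $U$ of $[P]$ in $\mathrm{Big}(X)$ such that, for every $[D]\in U$ and every prime divisor $D'\notin \mathrm{Null}_{q_X}(P)$, one still has $q_X(D,D')>0$. The conceptual obstruction is that the collection of such $D'$ is infinite, so a naive continuity argument applied prime divisor by prime divisor cannot be made uniform; this will be the main point to handle.

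The strategy is to replace the infinite family by a finite one via the cone structure. Since $\mathrm{Big}(X)$ is open, I would choose rational big classes $[D_1],\dots,[D_r]$ with $[P]$ in the interior of $K:=\sum_{i=1}^r \mathbb{R}^{\geq 0}[D_i]$, and pick effective representatives (each $D_i$ being big is in particular effective). The key observation is that, because $q_X$ is an intersection product (Proposition \ref{intersproduct}), for any fixed effective $\mathbb{R}$-divisor $E$ one has $q_X(E,D')\geq 0$ for every prime divisor $D'$ not appearing in the support of $E$; hence the set of $D'$ with $q_X(E,D')<0$ is finite. In particular, for each $i$ only finitely many prime divisors could have $q_X(D_i,D')<0$, and these form a finite list $\mathcal F$.

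With $\mathcal F$ finite, I can rescale: by continuity of $q_X$ and by shrinking each $D_i$ to $\nu D_i$ with $\nu>0$ small rational, I can arrange that
\[
q_X(P+\nu D_i,D')>0
\]
for every $i$ and every $D'\in \mathcal F$ with $q_X(P,D')>0$ (a finite number of strict inequalities). For $D'\notin \mathcal F$ the contribution $q_X(\nu D_i,D')$ is already non-negative, and if moreover $D'\notin \mathrm{Null}_{q_X}(P)$ then $q_X(P+\nu D_i,D')\geq q_X(P,D')>0$ automatically. I would then define the open cone
\[
U:=\sum_{i=1}^r \mathbb{R}^{>0}[P+\nu D_i],
\]
which is an open neighbourhood of $[P]$ contained in $\mathrm{Big}(X)$ (as a positive combination of big classes).

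Finally, for any $[D]\in U$, writing $D=\sum_i\alpha_i(P+\nu D_i)$ with $\alpha_i>0$ not all zero, and given a prime divisor $D'\notin \mathrm{Null}_{q_X}(P)$, each summand satisfies $q_X(\alpha_i(P+\nu D_i),D')>0$; summing yields $q_X(D,D')>0$, hence $D'\notin \mathrm{Null}_{q_X}(D)$. This gives $\mathrm{Null}_{q_X}(D)\subseteq \mathrm{Null}_{q_X}(P)$ as required. The hard part of the argument is really the choice of $\nu$, which is precisely what reduces the a priori infinite verification to a finite one; the rest is cone geometry plus the bilinearity and continuity of $q_X$.
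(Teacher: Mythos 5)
Your proof is correct and follows essentially the same route as the paper's: choose rational big effective classes $[D_1],\dots,[D_r]$ whose cone contains $[P]$ in its interior, exploit that $q_X$ is an intersection product to reduce to finitely many problematic prime divisors, rescale the $D_i$ so that $q_X(P+\nu D_i,D')>0$ whenever $q_X(P,D')>0$, and take $U=\sum_i\mathbb{R}^{>0}[P+\nu D_i]$. The only cosmetic difference is that you make the finiteness reduction explicit via the set $\mathcal F$, which the paper leaves implicit.
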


\begin{cor}\label{localpolyhed}
For every class $\alpha \in \mathrm{Big}(X) \cap \mathrm{Nef}_{q_X}(X)$ there exists an open neighbourhood $U=U(\alpha)$ in $N^1(X)_{\mathbb{R}}$ and prime divisors $D_1,\dots,D_n \in \mathrm{Neg}(X)$ such that
\[
U \cap\left( \mathrm{Big}(X) \cap \mathrm{Nef}_{q_X}(X)\right)= U \cap \left(D_1^{\geq 0}\cap \cdots \cap D_n^{\geq 0}\right).
\]
Moreover, $\overline{\mathrm{Nef}_{q_X}(X)\cap \mathrm{Big}(X)}=\mathrm{Nef}_{q_X}(X)$ is locally rational polyhedral at every big point.
\begin{proof}
Let $U$ be a neighborhood of $\alpha$ as in Lemma \ref{lem1}. Notice that by construction $\overline{U}$ is a rational cone. As every big class is effective and $q_X$ is an intersection product, we have the equality
\[
\mathrm{Big}(X) \cap \mathrm{Nef}_{q_X}(X)=\mathrm{Big}(X) \cap \left(\bigcap_{D \in \mathrm{Neg}(X)} D^{\geq 0}\right),
\]
and so
\begin{equation}\label{eq3}
\begin{split}
U \cap \left(\mathrm{Big}(X) \cap \mathrm{Nef}_{q_X}(X)\right)&= U \cap \mathrm{Big}(X) \cap \left(\bigcap_{D \in \mathrm{Neg}(X)} D^{\geq 0}\right)\\ &= U \cap  \left(\bigcap_{D \in \mathrm{Neg}(X)} D^{\geq 0}\right),
\end{split}
\end{equation}
because $U \subset \mathrm{Big}(X)$ by construction (see Lemma \ref{lem1}). We note that $\alpha$ belongs to the intersection in (\ref{eq3}). Moreover, if $D \in \mathrm{Neg}(X)$, either $U \subset D^{\geq 0}$ or $U \cap D^{\perp} \neq 0$. Indeed, if  $U \not \subset D^{\geq 0}$, we can find a class $[D'] \in U$ such that $q_X(D',D)<0$ and since $U$ is a convex cone, the segment joining $D'$ and $\alpha$ is contained in $U$ and must intersect $D^{\perp}$, hence $U \cap D^{\perp} \neq 0$.
Now, if  $U \subset D^{\geq 0}$ (for some $D \in \mathrm{Neg}(X)$), we can omit $D^{\geq 0}$ from (\ref{eq3}). Otherwise, we notice that we can have  $U \cap D^{\perp} \neq 0$ only for finitely many prime divisors in $\mathrm{Neg}(X)$. Indeed, if $[D'] \in U$, then $\mathrm{Null}_{q_X}(D')\subset \mathrm{Null}_{q_X}(\alpha)$ by construction, and $\mathrm{Null}_{q_X}(\alpha)$ is finite by Remark \ref{nulllocusfinite}. From the above argument, it follows that 
\[
U \cap \left(\mathrm{Big}(X) \cap \mathrm{Nef}_{q_X}(X)\right)= U \cap \left(\bigcap_{D \in \mathrm{Null}_{q_X}(\alpha)} D^{\geq 0} \right),
\]
hence $U$ is the open neighbourhood of $\alpha$ we were looking for, and $\mathrm{Nef}_{q_X}(X)$ is clearly locally rational polyhedral at $\alpha$ by definition.
\end{proof}
\end{cor}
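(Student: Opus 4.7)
The plan is to combine Lemma \ref{lem1} with the finiteness of $\mathrm{Null}_{q_X}(\alpha)$ (Remark \ref{nulllocusfinite}) to replace the infinite intersection defining $\mathrm{Nef}_{q_X}(X)$ by a finite one in a neighbourhood of $\alpha$. First I would apply Lemma \ref{lem1} to $\alpha$ (viewed as a big, $q_X$-nef class, so in particular representable by a big $q_X$-nef $\mathbb{R}$-divisor) to obtain a convex open neighbourhood $U \subset \mathrm{Big}(X)$ of $\alpha$ such that $\mathrm{Null}_{q_X}(\beta) \subseteq \mathrm{Null}_{q_X}(\alpha)$ for every $[\beta] \in U$. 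Inspecting the construction in Lemma \ref{lem1}, $U$ is in fact a rational open cone.

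Next I would establish the set-theoretic identity
\[
\mathrm{Big}(X) \cap \mathrm{Nef}_{q_X}(X) \;=\; \mathrm{Big}(X) \cap \Bigl(\bigcap_{D \in \mathrm{Neg}(X)} D^{\geq 0}\Bigr).
\]
The inclusion $\subseteq$ is obvious. For $\supseteq$, suppose $\beta$ is big and $q_X(\beta, D) \geq 0$ for every $q_X$-exceptional prime divisor $D$; for any other prime divisor $D'$ one has $q_X(D') \geq 0$ and, writing $\beta \equiv A + N$ with $A$ ample and $N$ effective (Proposition 2.2.22 of \cite{Laz}), the positivity of $q_X(A, D')$ together with $q_X(N, D') \geq 0$ (since $q_X$ is an intersection product and $q_X(D') \geq 0$ even when $D'$ is a component of $N$) yields $q_X(\beta, D') > 0$. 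Hence only the $D^{\geq 0}$ with $D$ in $\mathrm{Neg}(X)$ impose nontrivial constraints on big classes.

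Now intersecting with $U$, I would argue that in the intersection $\bigcap_{D \in \mathrm{Neg}(X)} D^{\geq 0}$ the half-spaces which actively cut $U$ are indexed by the finite set $\mathrm{Null}_{q_X}(\alpha)$. Indeed, fix $D \in \mathrm{Neg}(X)$: if $U \subset D^{\geq 0}$ the constraint is redundant near $\alpha$; otherwise there is some $[\beta] \in U$ with $q_X(\beta, D) < 0$, and since $U$ is a convex cone containing $\alpha$ with $q_X(\alpha, D) \geq 0$, the open segment joining $\alpha$ and $\beta$ lies in $U$ and meets $D^{\perp}$ at some $[\gamma] \in U$, so $D \in \mathrm{Null}_{q_X}(\gamma) \subseteq \mathrm{Null}_{q_X}(\alpha)$. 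Since $\mathrm{Null}_{q_X}(\alpha)$ is finite by Remark \ref{nulllocusfinite}, letting $D_1, \dots, D_n$ be its elements gives
\[
U \cap \bigl(\mathrm{Big}(X) \cap \mathrm{Nef}_{q_X}(X)\bigr) \;=\; U \cap \bigl(D_1^{\geq 0} \cap \cdots \cap D_n^{\geq 0}\bigr),
\]
which exhibits the required finite rational linear description at every big $q_X$-nef point. The main subtlety, which I would expect to have to check carefully, is the convexity/segment argument above: it is precisely what transfers the a priori uncountable family of half-space constraints into a finite list governed by $\mathrm{Null}_{q_X}(\alpha)$.

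Finally, for the closure statement $\overline{\mathrm{Nef}_{q_X}(X) \cap \mathrm{Big}(X)} = \mathrm{Nef}_{q_X}(X)$, I would approximate any $q_X$-nef class $\beta$ by $\beta + \epsilon H$ with $H$ ample: the sum remains $q_X$-nef (both summands are), and lies in the positive cone $\mathscr{C}_X$ for every $\epsilon > 0$, hence is big by Remark \ref{rmkpositiveconeinbigcone}. Letting $\epsilon \to 0$ yields density, and combining this with the local description above shows that $\mathrm{Nef}_{q_X}(X)$ is locally rational polyhedral at every big point in the sense of Definition \ref{deflocalpolyhed}.
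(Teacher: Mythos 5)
Your proof follows essentially the same approach as the paper: apply Lemma \ref{lem1} to get a rational open cone $U \subset \mathrm{Big}(X)$ around $\alpha$, reduce the $q_X$-nef condition on big classes to nonnegativity against $\mathrm{Neg}(X)$, and use the inclusion $\mathrm{Null}_{q_X}(\beta) \subseteq \mathrm{Null}_{q_X}(\alpha)$ on $U$ together with a convexity/segment argument to cut down to the finitely many half-spaces indexed by $\mathrm{Null}_{q_X}(\alpha)$. The one place you go beyond the paper is the explicit density argument ($\beta + \epsilon H$ with $H$ ample lies in $\mathscr{C}_X \cap N^1(X)_{\mathbb{R}} \subset \mathrm{Big}(X)$) for the closure equality $\overline{\mathrm{Nef}_{q_X}(X) \cap \mathrm{Big}(X)} = \mathrm{Nef}_{q_X}(X)$, which the paper asserts without comment; your justification is correct and a worthwhile supplement.
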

 The following easy remarks will be useful in the rest of the paper.
\begin{rmk}
Let $D$ be an effective $\mathbb{R}$-divisor on $X$ and set
\[
\mathrm{Neg}_{q_X}(D)= \{N_1, \dots, N_k\}.
\]
We note that the classes $[N_1],\dots,[N_k]$ are linearly independent in $N^1(X)_{\mathbb{R}}$, because the Gram matrix $(q_X(N_i,N_j))_{i,j}$ is negative-definite.
\end{rmk}

\begin{rmk}\label{rmk1}
Let $D,D'$ be two distinct $q_X$-exceptional prime divisors on $X$. Then $[D] \neq [D']$ in $N^1(X)_{\mathbb{R}}$. Indeed, if $q_X(D,D') \geq 0$ and $[D]=[D']$, then $0>q_X(D)=q_X(D,D')\geq 0$, and this is clearly absurd. 
\end{rmk}

\begin{defn}[\bf Boucksom-Zariski chambers]
Let $P$ be a big and $q_X$-nef $\mathbb{R}$-divisor on $X$. The \textit{Boucksom-Zariski chamber} of $P$ is defined as
\[
\Sigma_P:=\left\{D \in \mathrm{Big}(X) \; | \; \mathrm{Neg}_{q_X}(D)=\mathrm{Null}_{q_X}(P)\right\}.
\]
\end{defn}
We note that this definition makes sense by item 2 in Remark \ref{rmk5}. Moreover, in general, these chambers are neither closed nor open.

\begin{ex}\label{exchamber}
Let $M$ be an $\mathbb{R}$-divisor whose class lies in $\mathrm{int}(\mathrm{Mov}(X))$. We observe that $M$ is also big. Indeed
\begin{equation}\label{equation10}
\overline{\mathrm{Mov}(X)}=\mathrm{Nef}_{q_X}(X)\subset \overline{\mathrm{Eff}(X)},
\end{equation}
where the equality in (\ref{equation10}) is justified by Remarks \ref{movableconeisqnefcone} and \ref{qnefrmk1}. It follows that
\[
\mathrm{int}\left(\overline{\mathrm{Mov}(X)}\right)=\mathrm{int}\left(\mathrm{Mov}(X)\right)\subset \mathrm{Big}(X),
\]
which implies that $M$ is also big, hence it makes sense to consider the chamber associated with $M$. Then $\Sigma_M=\mathrm{Big}(X)\cap \mathrm{Nef}_{q_X}(X)$ by definition.
\end{ex}

\begin{rmk}
If $P$ is big and $q_X$-nef, the chamber $\Sigma_P$ is a convex cone. Indeed, if $\alpha,\beta \in \Sigma_P$, 
\[
\mathrm{Null}_{q_X}(P)=\mathrm{Neg}_{q_X}(\alpha)=\mathrm{Neg}_{q_X}(\beta)=\mathrm{Neg}_{q_X}(\alpha+\beta),
\] 
hence $\alpha+\beta \in \Sigma_P$.
\end{rmk}

Recall that a face of a convex cone $K$ is a subcone $F$ such that if $v,v' \in K$, and $v+v' \in F$, then $v,v' \in F$. Let $P$ be a big and $q_X$-nef $\mathbb{R}$-divisor and define 
\[
\mathrm{Face}(P):=\mathrm{Nef}_{q_X}(X) \cap \mathrm{Null}_{q_X}(P)^{\perp},
\]
where 
\[
\mathrm{Null}_{q_X}(P)^{\perp}:=\bigcap_{\substack{D' \in \mathrm{Null}_{q_X}(P)}}(D')^{\perp}.
\]
We observe that $\mathrm{Face}(P)$ is a face of the $q_X$-nef cone. Indeed, suppose $\alpha+\beta \in \mathrm{Face}(P)$, where $\alpha,\beta \in \mathrm{Nef}_{q_X}(X)$. If $D' \in \mathrm{Null}_{q_X}(P)$, then 
\[
0=q_X(\alpha+\beta,D')=q_X(\alpha,D')+q_X(\beta,D').
\]
 Thus, since $\alpha$ and $\beta$ are $q_X$-nef, we must have $q_X(\alpha,D')=q_X(\beta,D')=0$, and so $\alpha, \beta \in \mathrm{Face}(P)$.

\begin{defn}\label{qexcblock}
A subset $S \subset \mathrm{Neg}(X)$ is a \textit{$q_X$-exceptional block} if $S=\{\emptyset\}$, or $S=\{D_1,\dots,D_k\}$ and the Gram matrix $\left(q_X(D_i,D_j)\right)_{i,j}$ is negative-definite.
\end{defn}
 
 \begin{lem}\label{lemexcepblock}
 Let $S$ be a $q_X$-exceptional block. Then there exists a $q_X$-nef divisor $P$ satisfying $\mathrm{Null}_{q_X}(P)=S$.
 \begin{proof}
If $S=\{\emptyset\}$, any divisor in $\mathrm{int}\left(\mathrm{Mov}(X)\right)$ will satisfy what is wanted (and we have such a divisor because $X$ is projective). If $S=\{D_1,\dots,D_k\}$, pick a divisor $M\in \mathrm{int}\left(\mathrm{Mov}(X)\right)$. We claim that $P$ can be constructed explicitly, and of the form $P=M+\sum_{i=1}^k\lambda_i D_i$, where the $\lambda_i$ are positive rational numbers. To prove the claim it is sufficient to check that:
\begin{enumerate}
\item the linear system $\mathcal{S}$ of equations
\[
q_X(M,D_j)+\sum_{i=1}^k q_X( D_j,D_i)\lambda_i=0, \; \; j=1,\dots,k 
\] 
admits a (unique) solution $(\lambda_1,\dots,\lambda_k)\in \left(\mathbb{Q}^{>0}\right)^k$;
\item with such a solution $\mathrm{Null}_{q_X}(P)=S$ holds.
\end{enumerate}
Let us prove (1). For this purpose, we will need Lemma 4.1 of \cite{Bau}. We know that $G$ is negative-definite and so invertible. Then $\mathcal{S}$ has a unique solution. Now, $\mathcal{S}$ can be rewritten as
\[
G^{-1} \cdot (q_X(M,D_1),\dots,q_X(M,D_k))^{T}=(-\lambda_1,\dots,-\lambda_k)^T.
\]
The entries of $G^{-1}$ are rational, and by Lemma 4.1 of \cite{Bau} are also non-positive. Moreover, each $q_X(M,D_j)$ is positive, because any element in $\mathrm{int}\left(\mathrm{Mov}(X)\right)$ intersects positively (with respect to $q_X$) any $q_X$-exceptional prime divisor . From all the above it follows that the $\lambda_i$ are positive and rational, and we are done. We now have to prove (2). In particular, as $S \subseteq \mathrm{Null}_{q_X}(P)$ by construction, we only have to verify that $\mathrm{Null}_{q_X}(P) \subseteq S$. But this is immediate, because $P=M+\sum_{i=1}^k \lambda_iD_i$, and $q_X(P,D')=0$ (where $D' \in \mathrm{Null}_{q_X}(P)$) clearly implies $D' \in S$.
  \end{proof}
 \end{lem}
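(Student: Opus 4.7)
My plan is to construct $P$ explicitly as a perturbation of a movable class by a rational combination of the divisors in $S$. In the trivial case $S = \emptyset$, any ample class $P$ does the job: by Lemma \ref{lemample}, $q_X(P, D') > 0$ for every prime divisor $D'$, so $P$ is $q_X$-nef with empty null locus.

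For the general case $S = \{D_1, \ldots, D_k\}$, I would fix a class $M \in \mathrm{int}(\mathrm{Mov}(X))$ (which exists because $X$ is projective, so the ample cone sits inside $\mathrm{Mov}(X)$) and search for $P$ of the form
\[
P = M + \sum_{i=1}^{k} \lambda_i D_i
\]
with each $\lambda_i$ a strictly positive rational number. Imposing the orthogonality conditions $q_X(P, D_j) = 0$ for $j = 1, \ldots, k$ translates into the linear system $G\vec{\lambda} = -\vec{b}$, where $G = (q_X(D_i, D_j))_{i,j}$ is the Gram matrix of $S$ and $\vec{b} = (q_X(M, D_j))_j$. Since $G$ is negative-definite by hypothesis, it is invertible and the system admits a unique rational solution $\vec{\lambda}$.

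The crucial point—and the one place where some work is needed—is verifying that each $\lambda_i$ is actually strictly positive. This rests on the linear-algebraic fact that the inverse of a negative-definite symmetric matrix whose off-diagonal entries are non-negative (which holds here by Proposition \ref{intersproduct}) has non-positive entries; this is precisely the content I would invoke from Lemma 4.1 of \cite{Bau}. Since $M$ lies in the interior of the movable cone, it intersects every $q_X$-exceptional prime divisor strictly positively (any movable class pairs non-negatively with exceptional divisors, and moving to the interior makes this strict), so every component $b_j$ is positive. Together with entrywise $-G^{-1} \geq 0$ and the invertibility of $G$ (forcing each row of $-G^{-1}$ to contain a positive entry), this yields $\lambda_i > 0$.

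It remains to check the two properties of $P$. By construction $q_X(P, D_j) = 0$ for every $D_j \in S$, so $S \subseteq \mathrm{Null}_{q_X}(P)$. Conversely, for any prime divisor $D' \notin S$ we have $q_X(M, D') > 0$ (as $M$ lies in the interior of $\mathrm{Mov}(X) \subset \mathrm{Nef}_{q_X}(X)$, and equality would place $D'$ on a bounding wall in contradiction with $D' \notin S$ after examining the null locus of $M$) and $q_X(D_i, D') \geq 0$ by Proposition \ref{intersproduct}; since all $\lambda_i > 0$, we obtain $q_X(P, D') > 0$. This simultaneously proves that $P$ is $q_X$-nef and that $\mathrm{Null}_{q_X}(P) \subseteq S$, completing the argument. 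The main obstacle really is the sign of $G^{-1}$, a purely linear-algebraic matter handled by the cited $M$-matrix lemma.
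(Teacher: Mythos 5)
Your proposal follows essentially the same route as the paper's proof: pick $M \in \mathrm{int}(\mathrm{Mov}(X))$, set $P=M+\sum_i\lambda_iD_i$, solve the linear system $G\vec{\lambda}=-\vec{b}$, and use Lemma 4.1 of \cite{Bau} (the $M$-matrix sign property) together with $q_X(M,D_j)>0$ to conclude positivity of the $\lambda_i$. Your extra observation that invertibility of $G$ forces each row of $-G^{-1}$ to contain a positive entry, yielding \emph{strict} positivity $\lambda_i>0$, makes the sign argument slightly more explicit than the paper's, which merely asserts the conclusion; your parenthetical justification that $q_X(M,D')>0$ for \emph{all} prime divisors $D'$ (not only exceptional ones) is a bit informal, but it is correct since $M$ is big, so any $D'$ with $q_X(M,D')=0$ would be forced to be $q_X$-exceptional, after which the interior condition excludes it.
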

\begin{lem}\label{chambersequaliff}
Let $P$ and $P'$ be big and $q_X$-nef $\mathbb{R}$-divisors on $X$. The following hold:
\begin{enumerate}
\item $\Sigma_P=\Sigma_{P'}$ if and only if $\mathrm{Face}(P)=\mathrm{Face}(P')$.
\item $\Sigma_P \cap \Sigma_{P'}= \emptyset$, if $\mathrm{Face}(P) \neq \mathrm{Face}(P')$.
\item $\mathrm{Big}(X)$ is the disjoint union of the Boucksom-Zariski chambers.
\end{enumerate}
\begin{proof}

\begin{enumerate}
\item We first show $\Leftarrow$. By definition of $\Sigma_P$ and $\Sigma_{P'}$, it is sufficient to prove that $\mathrm{Null}_{q_X}(P)=\mathrm{Null}_{q_X}(P')$. If $P \in \mathrm{Face}(P)=\mathrm{Face}(P')$, then $q_X(P,D)=0$, for every prime divisor $D$ belonging to $\mathrm{Null}_{q_X}(P')$. Thus $\mathrm{Null}_{q_X}(P') \subset \mathrm{Null}_{q_X}(P)$, and by symmetry also the other inclusion holds. The implication $\Rightarrow$ is straightforward.
\item The contrapositive of $\Rightarrow$ in item 1 tells us that if $\mathrm{Face}(P) \neq \mathrm{Face}(P')$, then $\Sigma_P \neq \Sigma_{P'}$, and this clearly implies $\Sigma_P \cap \Sigma_{P'}= \emptyset$.
\item We have the inclusion
\[
\bigcup_{\substack{P \in \mathrm{Big}(X) \cap \mathrm{Nef}_{q_X}(X)}} \Sigma_P \subseteq \mathrm{Big}(X).
\]
The inclusion $\supseteq$ follows directly applying Lemma \ref{lemexcepblock} to $\mathrm{Neg}_{q_X}(D)$, where $D$ is a big $\mathbb{R}$-divisor.
\end{enumerate}
\end{proof}
\end{lem}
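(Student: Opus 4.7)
The three items share a common thread: item~(2) follows from a strengthening of the argument used for item~(1), while item~(3) requires an independent existence argument. I plan to prove (1) first, then deduce (2) and finally (3).

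For ``$\Leftarrow$'' of (1), the argument is short: since $P$ is $q_X$-nef and $q_X(P, D) = 0$ for every $D \in \mathrm{Null}_{q_X}(P)$, we have $P \in \mathrm{Face}(P) = \mathrm{Face}(P')$, forcing $q_X(P, D) = 0$ for every $D \in \mathrm{Null}_{q_X}(P')$. Hence $\mathrm{Null}_{q_X}(P') \subseteq \mathrm{Null}_{q_X}(P)$; symmetry gives equality, and the chambers coincide by definition of $\Sigma_P$. For ``$\Rightarrow$'', the subtlety is that $\Sigma_P = \Sigma_{P'}$ carries no \emph{canonical} element from which to read off $\mathrm{Null}_{q_X}(P)$, so I plan to exhibit one explicitly: $D := P + \epsilon \sum_{D_i \in \mathrm{Null}_{q_X}(P)} D_i$ for $\epsilon > 0$ small. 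The class $[D]$ is big (sum of a big class and an effective one), and the presentation $D = P + \epsilon \sum D_i$ satisfies the three Boucksom--Zariski axioms of Theorem~\ref{thm1} provided $\mathrm{Null}_{q_X}(P)$ is a $q_X$-exceptional block. Granting this, uniqueness of the Boucksom--Zariski decomposition gives $\mathrm{Neg}_{q_X}(D) = \mathrm{Null}_{q_X}(P)$, so from $D \in \Sigma_P = \Sigma_{P'}$ we conclude $\mathrm{Null}_{q_X}(P) = \mathrm{Null}_{q_X}(P')$ and hence $\mathrm{Face}(P) = \mathrm{Face}(P')$.

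Item (2) then follows immediately: any $D \in \Sigma_P \cap \Sigma_{P'}$ forces $\mathrm{Null}_{q_X}(P) = \mathrm{Neg}_{q_X}(D) = \mathrm{Null}_{q_X}(P')$ and hence equal faces, so by contrapositive distinct faces yield disjoint chambers. For (3), the inclusion $\bigcup_P \Sigma_P \subseteq \mathrm{Big}(X)$ is immediate. For the reverse inclusion, given a big $\mathbb{R}$-divisor $D$ its negative locus $\mathrm{Neg}_{q_X}(D)$ is a $q_X$-exceptional block by Theorem~\ref{thm1}(2), so Lemma~\ref{lemexcepblock} produces a $q_X$-nef $P = M + \sum \lambda_i D_i$ with $\mathrm{Null}_{q_X}(P) = \mathrm{Neg}_{q_X}(D)$ and $M \in \mathrm{int}(\mathrm{Mov}(X)) \subseteq \mathrm{Big}(X)$ (Example~\ref{exchamber}). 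Hence $P$ is big and $D \in \Sigma_P$; combined with (2) this realizes $\mathrm{Big}(X)$ as the disjoint union of the chambers.

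The main obstacle is the block property of $\mathrm{Null}_{q_X}(P)$ invoked in ``$\Rightarrow$'' of (1), which the author's ``straightforward'' glosses over. I would deduce it from an $\mathbb{R}$-divisor extension of Lemma~\ref{bigqnefispositive} (yielding $q_X(P) > 0$ via Theorem~\ref{nefonbiratmodel} applied to $P$, which is effective since big) together with the strong Hodge--index theorem (Corollary~\ref{stronghodgeindex}): the classes in $\mathrm{Null}_{q_X}(P)$ lie in $[P]^\perp$, where $q_X$ is negative-definite. Negative-definiteness of the Gram matrix further requires linear independence of these classes, which I would obtain by contradiction: a non-trivial relation $\sum c_\ell [D_\ell] = 0$ split by sign gives disjointly-supported effective divisors $E_+, E_-$ with $[E_+] = [E_-] =: \alpha \in [P]^\perp$, satisfying $q_X(\alpha) = q_X(E_+, E_-) \geq 0$ (intersection product on disjoint supports) while $q_X(\alpha) \leq 0$ by strong Hodge--index, so $\alpha = 0$; but then $q_X(A, E_+) > 0$ for an ample $A$ contradicts $[E_+] = 0$ by Lemma~\ref{lemample}.
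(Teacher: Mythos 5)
Your proof is correct and follows essentially the same route as the paper's, differing mainly in that you fill in details the paper glosses over. In particular, for the ``$\Rightarrow$'' direction of item~(1), which the paper dismisses as ``straightforward,'' you correctly identify that the needed fact is the \emph{nonemptiness} of $\Sigma_P$ (since $P$ itself has $\mathrm{Neg}_{q_X}(P)=\emptyset$, which generally differs from $\mathrm{Null}_{q_X}(P)$, and so $P\notin\Sigma_P$), and you construct an explicit witness $D=P+\epsilon\sum D_i$. The hypothesis this construction needs --- that $\mathrm{Null}_{q_X}(P)$ is a $q_X$-exceptional block --- is exactly what the paper records later in Remark~\ref{rmkbackzariskidecomp}, derived from Lemma~\ref{bigqnefispositive} and Corollary~\ref{stronghodgeindex}, so your derivation uses the same ingredients the paper has in mind. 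Your item~(2) argument (any $D\in\Sigma_P\cap\Sigma_{P'}$ forces $\mathrm{Null}_{q_X}(P)=\mathrm{Neg}_{q_X}(D)=\mathrm{Null}_{q_X}(P')$) is actually a cleaner justification than the paper's passage from ``$\Sigma_P\neq\Sigma_{P'}$'' to ``$\Sigma_P\cap\Sigma_{P'}=\emptyset$.'' One small wrinkle at the very end: once you have shown $[E_+]=0$, invoking Lemma~\ref{lemample} does not immediately give a contradiction, since that lemma yields strict positivity of $q_X(-,A)$ only on nonzero pseudo-effective classes. What you actually need is the (standard, and implicitly used in the paper, e.g.\ just after Remark~\ref{rmk5}) fact that a nonzero effective divisor $E_+$ pairs strictly positively under $q_X$ against an ample class, hence has nonzero numerical class; that furnishes the contradiction with $[E_+]=0$ and shows all coefficients vanish.
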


 \begin{cor}\label{otobzc}
 There is a one-to-one correspondence between the Boucksom-Zariski chambers and the $q_X$-exceptional blocks.
 \begin{proof}
 This is clear from the definition of Boucksom-Zariski chamber, from Lemma \ref{lemexcepblock} and item (3) of Lemma \ref{chambersequaliff}. 
 \end{proof}
  \end{cor}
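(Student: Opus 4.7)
The plan is to exhibit the bijection via the mutually inverse assignments $S \mapsto \Sigma_{P_S}$ and $\Sigma_P \mapsto \mathrm{Null}_{q_X}(P)$, where $P_S$ denotes any big and $q_X$-nef divisor with $\mathrm{Null}_{q_X}(P_S) = S$ (whose existence is exactly the content of Lemma \ref{lemexcepblock}).

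First I would check that the forward map $S \mapsto \Sigma_{P_S}$ is well-defined. Since
\[
\mathrm{Face}(P) = \mathrm{Nef}_{q_X}(X) \cap \mathrm{Null}_{q_X}(P)^{\perp}
\]
depends only on the null locus of $P$, any two choices of $P_S$ share the same face and hence by item (1) of Lemma \ref{chambersequaliff} define the same chamber. Injectivity then follows from item (2) of the same Lemma: if $S_1 \neq S_2$, then $\mathrm{Face}(P_{S_1}) \neq \mathrm{Face}(P_{S_2})$—inspection of the proof of item (1) shows that the equality of faces would force the equality of null loci, because any big $q_X$-nef divisor $P$ lies in $\mathrm{Face}(P)$ and is $q_X$-orthogonal to every element of $\mathrm{Null}_{q_X}(P)$—whence $\Sigma_{P_{S_1}} \cap \Sigma_{P_{S_2}} = \emptyset$ by item (2) of Lemma \ref{chambersequaliff}.

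For surjectivity and the fact that the backward map actually lands in the set of $q_X$-exceptional blocks, I would exhibit a concrete element of $\Sigma_{P_S}$. Given an exceptional block $S = \{D_1,\dots,D_k\}$ and $P_S$ as above, the perturbation $D := P_S + \epsilon \sum_{i=1}^k D_i$ (with $\epsilon > 0$ small) is big, as a sum of a big and an effective divisor. Its Boucksom-Zariski decomposition must be $P(D) = P_S$, $N(D) = \epsilon \sum_i D_i$ by the uniqueness in Theorem \ref{thm1}: indeed $P_S$ is $q_X$-nef, the Gram matrix of $\{D_i\}$ is negative definite precisely because $S$ is an exceptional block (this is where that hypothesis is essential), and $P_S$ is $q_X$-orthogonal to each $D_i \in \mathrm{Null}_{q_X}(P_S)$. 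Hence $D \in \Sigma_{P_S}$, so the chamber is non-empty and $\mathrm{Null}_{q_X}(P_S) = \mathrm{Neg}_{q_X}(D) = S$, closing the loop. Combined with item (3) of Lemma \ref{chambersequaliff}—which already tells us that every non-empty chamber arises in the partition of $\mathrm{Big}(X)$ and hence comes from some exceptional block via Theorem \ref{thm1}(2)—the two maps are mutually inverse.

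No substantive obstacle arises: the Corollary is essentially a bookkeeping consequence of Lemmas \ref{lemexcepblock} and \ref{chambersequaliff}. The only delicate point is the explicit verification, via the perturbation above, that every exceptional block actually indexes a non-empty chamber; this is handled by one application of the uniqueness in the Boucksom-Zariski decomposition.
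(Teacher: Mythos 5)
Your proof is correct and follows essentially the same route the paper intends: Lemma \ref{lemexcepblock} gives the map from blocks to chambers, Lemma \ref{chambersequaliff} controls well-definedness and disjointness, and the fact that negative parts of Boucksom-Zariski decompositions are $q_X$-exceptional (Theorem \ref{thm1}) gives the backward map. You spell out two points the paper leaves implicit in its terse proof — that $\mathrm{Face}(P)$ determines $\mathrm{Null}_{q_X}(P)$ (so the forward map is injective), and that each $\Sigma_{P_S}$ is non-empty via the perturbation $P_S + \epsilon\sum D_i$ — both of which are genuinely needed for a bijection, so this is a faithful and slightly more careful rendering of the intended argument.
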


\begin{prop}\label{propborder}
A big class $\alpha$ is on the boundary of some chamber $\Sigma_P$, with $P$ a big and $q_X$-nef $\mathbb{R}$-divisor, if and only if $\mathrm{Neg}_{q_X}(\alpha) \subsetneq \mathrm{Null}_{q_X}(P(\alpha))$.
\begin{proof}
As usual we can assume $\alpha=[D]$, with $D$ a big and effective $\mathbb{R}$-divisor. We first show $\Rightarrow$. Let $D \in \partial \Sigma_P$ for some big and $q_X$-nef divisor $P$, and $N(D)=\sum_{i=1}^ka_i N_i$. Let $\norm{\cdot} $ be any norm on $N^1(X)_{\mathbb{R}}$. As $D$ is on the boundary of $\Sigma_P$ (of course this does not imply $D \in \Sigma_P$), any small "ball" $B_{\epsilon}(D) \subset \mathrm{Big}(X)$ of center $D$ and radius $\epsilon$ will contain an element from another chamber. Thus, for every $\epsilon >0$ small enough, we can find a class $\beta \in N^1(X)_{\mathbb{R}}$ of norm $\norm{\beta}< \epsilon$, such that $\mathrm{Neg}_{q_X}(D+\beta) \neq \mathrm{Neg}_{q_X}(D)$. As $q_X$ is non-degenerate on $N^1(X)_{\mathbb{R}}$, we can  decompose the N\'{e}ron-Severi space as
\[
N^1(X)_{\mathbb{R}}= \left<P(D),N_1,\dots,N_K\right> \oplus \left<P(D),N_1,\dots,N_k\right>^{\perp},
\]
and so we can write $\beta= \beta' \oplus \beta''$, where 
\[
\beta' \in \left<P(D),N_1,\dots,N_K\right> \; \mathrm{ and } \; \beta'' \in \left<P(D),N_1,\dots,N_K \right>^{\perp}
\] 
(note that $P(D),N_1,\dots,N_k$ are linearly independent in $N^1(X)_{\mathbb{R}}$). If necessary, we can choose $\epsilon$ even smaller, in order to have 
\[
\mathrm{Neg}_{q_X}(D+ \beta')=\mathrm{Neg}_{q_X}(D).
\] 
It follows that we can assume $\beta \in \left< P(D),N_1,\dots,N_k\right>^{\perp}$. Now, as by Lemma \ref{positivepartisbig} $P(D)$ is big and $q_X$-nef, we can take an open neighborhood $U=U(P(D))$ like the one in Lemma \ref{lem1}. As $U$ is open,  choosing $\epsilon$ even smaller (if necessary), we may assume that $P(D)+ \beta \in U$. Notice that, by construction, $\mathrm{Neg}_{q_X}(D+ \beta) \neq \mathrm{Neg}_{q_X}(D)$. Thus $P(D)+ \beta$ cannot be $q_X$-nef, as otherwise the divisorial Zariski decomposition of $D+ \beta$ would be $(P(D)+\beta)+N(D)$, because $N(D)$ is $q_X$-orthogonal to $\beta$, and so we would have $\mathrm{Neg}_{q_X}(D+ \beta) = \mathrm{Neg}_{q_X}(D)$, which is absurd. It follows that we can find a prime divisor $D' \in \mathrm{Neg}(X) \setminus \{N_1, \dots, N_k\}$ such that $q_X(P(D)+\beta,D')<0$. But $q_X(P(D),D') \geq 0$, because $P(D)$ is $q_X$-nef. Hence there exists $t_0 \in [0,1[$ satisfying 
\[
q_X(P(D)+t_0\beta,D')=q_X(P(D),D')+t_0q_X(\beta,D')=0.
\]
By the choice of $U$ we know that $\mathrm{Null}_{q_X}(P(D)+t_0\beta)\subset \mathrm{Null}_{q_X}(P(D))$, and so $D' \in \mathrm{Null}_{q_X}(P(D))$. Thus we obtained $\mathrm{Neg}_{q_X}(D) \subsetneq \mathrm{Null}_{q_X}(P(D))$.

The implication $\Leftarrow$ is easier. Without loss of generality we can assume $\mathrm{Null}_{q_X}(P(D)) \setminus \mathrm{Neg}_{q_X}(D)=\{D'\}$. Now, consider $D+ \epsilon D'$, where $\epsilon >0$. It is clear that the divisorial Zariski decomposition of $D+ \epsilon D'$ is $P(D)+(N(D)+\epsilon D')$, and so $D+\epsilon D'$ lies in a different chamber than the one to which $D$ belongs (in particular, this chamber is exactly $\Sigma_{P(D)}$), for every $\epsilon >0$. But $\mathrm{lim}_{\epsilon \to 0} (D+\epsilon D')=D$, thus $D$ is a limit point of a sequence contained in the chamber $\Sigma_{P(D)}$, hence it must lie in the boundary of $\Sigma_{P(D)}$.
\end{proof}
\end{prop}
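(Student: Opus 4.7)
The plan is to prove the two directions separately, anchoring both on the facts that every big class $\alpha$ automatically satisfies $\mathrm{Neg}_{q_X}(\alpha) \subseteq \mathrm{Null}_{q_X}(P(\alpha))$ (property (3) of the Boucksom-Zariski decomposition in Theorem \ref{thm1}) and lies in the unique chamber corresponding to the $q_X$-exceptional block $\mathrm{Neg}_{q_X}(\alpha)$ via Corollary \ref{otobzc}. Since the chambers partition $\mathrm{Big}(X)$, being on the boundary of \emph{some} chamber is the same as being on the boundary of one's \emph{own} chamber, and the proposition asserts that this is detected precisely by strictness of the inclusion above.

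For $(\Leftarrow)$ I would pick a prime divisor $D' \in \mathrm{Null}_{q_X}(P(\alpha)) \setminus \mathrm{Neg}_{q_X}(\alpha)$ and inspect the perturbation $\alpha_\epsilon := \alpha + \epsilon [D']$ for small $\epsilon>0$. I expect the Boucksom-Zariski decomposition of $\alpha_\epsilon$ to be $P(\alpha) + (N(\alpha) + \epsilon D')$; by the uniqueness in Theorem \ref{thm1} this reduces to checking the three axioms, the only delicate one being negative-definiteness of the enlarged Gram matrix. This follows from the Strong Hodge-index Theorem (Corollary \ref{stronghodgeindex}): $P(\alpha)$ is big and $q_X$-nef so $q_X(P(\alpha))>0$, hence $q_X$ is negative-definite on $P(\alpha)^\perp$, which contains each $N_i$ and $D'$; a small ancillary check using the non-positivity of the entries of the inverse of the $\{N_i\}$-Gram matrix (Lemma 4.1 of \cite{Bau}, already invoked in Lemma \ref{lemexcepblock}) combined with pairing against an ample class rules out $D'$ being a linear combination of the $N_i$, upgrading negative-semidefiniteness to strict negative-definiteness. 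Consequently $\mathrm{Neg}_{q_X}(\alpha_\epsilon) = \mathrm{Neg}_{q_X}(\alpha) \cup \{D'\}$ strictly enlarges $\mathrm{Neg}_{q_X}(\alpha)$, so $\alpha_\epsilon$ lies in a chamber distinct from that of $\alpha$, and letting $\epsilon \to 0^+$ places $\alpha$ on the boundary.

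For $(\Rightarrow)$ I would fix perturbations $\beta_n \to 0$ with $\mathrm{Neg}_{q_X}(\alpha + \beta_n) \neq \mathrm{Neg}_{q_X}(\alpha)$. Using the $q_X$-orthogonal decomposition $N^1(X)_{\mathbb{R}} = V \oplus V^\perp$ with $V := \langle P(\alpha), N_1, \ldots, N_k \rangle$ (non-degenerate by the Strong Hodge-index Theorem applied to $P(\alpha)$), split $\beta_n = \beta_n' + \beta_n''$ and argue that the $V$-component $\beta_n'$ can be absorbed into the Boucksom-Zariski data of $\alpha$ without altering the negative locus, reducing to $\beta_n \in V^\perp$. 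Then $P(\alpha)+\beta_n$ eventually sits in the neighborhood $U$ of $P(\alpha)$ furnished by Lemma \ref{lem1}, so $\mathrm{Null}_{q_X}(P(\alpha)+\beta_n) \subseteq \mathrm{Null}_{q_X}(P(\alpha))$. The change in the negative locus forces $P(\alpha)+\beta_n$ to fail $q_X$-nefness (otherwise uniqueness in Theorem \ref{thm1} would make it the positive part of $\alpha+\beta_n$, preserving the negative locus), so some prime divisor $D_n'$ satisfies $q_X(P(\alpha)+\beta_n, D_n')<0$. Since $P(\alpha)$ is $q_X$-nef, an intermediate-value argument produces $t_n \in [0,1)$ with $q_X(P(\alpha)+t_n\beta_n, D_n')=0$; this interpolated class still lies in $U$, whence $D_n' \in \mathrm{Null}_{q_X}(P(\alpha))$. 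Finally $D_n' \notin \{N_1, \ldots, N_k\}$, because $\beta_n \in V^\perp$ forces $q_X(P(\alpha)+\beta_n, N_i)=0$, incompatible with the strict negativity.

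I expect the main obstacle to be the reduction step in $(\Rightarrow)$: one has to justify that subtracting off $\beta_n'$ leaves the negative-locus discrepancy intact, which requires re-interpreting the residual perturbation $\beta_n''$ as a perturbation of the slightly shifted class $\alpha+\beta_n'$ (whose Boucksom-Zariski decomposition has positive part a positive multiple of $P(\alpha)$, hence the same null locus). A secondary technical point is the linear-independence check underlying the $(\Leftarrow)$ direction, which is what promotes negative-semidefiniteness inherited from $P(\alpha)^\perp$ to the strict negative-definiteness required by Theorem \ref{thm1}.
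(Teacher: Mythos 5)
Your proposal follows essentially the same approach as the paper: both directions use the same key ingredients (the $q_X$-orthogonal splitting $N^1(X)_\mathbb{R} = V \oplus V^\perp$ with $V=\langle P(\alpha), N_1,\dots,N_k\rangle$, the reduction to perturbations in $V^\perp$, the neighbourhood from Lemma~\ref{lem1}, the intermediate-value argument, and for $(\Leftarrow)$ the perturbation $\alpha+\epsilon D'$ together with Remark~\ref{rmkbackzariskidecomp}/Corollary~\ref{stronghodgeindex}). The one place where you go beyond the paper is the reduction step in $(\Rightarrow)$: the paper asserts ``we can assume $\beta\in V^\perp$'' somewhat tersely, whereas you correctly flag this as delicate and fill the gap by re-centering at $\alpha+\beta_n'$, whose positive part is a positive scalar multiple of $P(\alpha)$ and hence has the same null locus, so the discrepancy in negative loci survives the reduction.
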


\begin{cor}\label{interiorchamber}
For every big and $q_X$-nef $\mathbb{R}$-divisor, the interior of a Boucksom-Zariski chamber $\Sigma_P$ is
\[
\mathrm{int}\left(\Sigma_P\right)=\left\{D \in \mathrm{Big}(X) \; | \; \mathrm{Neg}_{q_X}(D)=\mathrm{Null}_{q_X}(P)=\mathrm{Null}_{q_X}(P(D))\right\}.
\]
\begin{proof}
 The inclusion $\subseteq$ follows from the definition of $\Sigma_P$ and Proposition \ref{propborder}. As for $\supseteq$, if $D$ is such that $\mathrm{Neg}_{q_X}(D)=\mathrm{Null}_{q_X}(P)=\mathrm{Null}_{q_X}(P(D))$, clearly $D \in \Sigma_P$. Moreover $D \in \mathrm{int}(\Sigma_P)$, again thanks to Proposition $\ref{propborder}$.
\end{proof}

\end{cor}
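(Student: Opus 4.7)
The plan is to deduce this corollary directly from Proposition \ref{propborder}, which already pinpoints exactly when a big class lies on the boundary of a Boucksom-Zariski chamber. Since $\mathrm{Big}(X)$ is open in $N^1(X)_{\mathbb{R}}$ and, by Lemma \ref{chambersequaliff}, is the disjoint union of all Boucksom-Zariski chambers, the interior of $\Sigma_P$ coincides with $\Sigma_P\setminus \partial\Sigma_P$. So the whole task reduces to translating the boundary condition of Proposition \ref{propborder} into a statement about interior points.

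For the inclusion $\subseteq$, I would start from $D\in\mathrm{int}(\Sigma_P)$. Membership in $\Sigma_P$ immediately gives $\mathrm{Neg}_{q_X}(D)=\mathrm{Null}_{q_X}(P)$ by definition of the chamber. Since $D\notin \partial \Sigma_P$, the contrapositive of Proposition \ref{propborder} (applied to $D$, noting that if $D$ lay on the boundary of any chamber it would lie on $\partial\Sigma_P$ by the disjointness of the chambers) forces the further equality $\mathrm{Neg}_{q_X}(D)=\mathrm{Null}_{q_X}(P(D))$, yielding the three desired equalities.

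For the inclusion $\supseteq$, suppose $D$ satisfies $\mathrm{Neg}_{q_X}(D)=\mathrm{Null}_{q_X}(P)=\mathrm{Null}_{q_X}(P(D))$. The first equality places $D$ in $\Sigma_P$. The equality $\mathrm{Neg}_{q_X}(D)=\mathrm{Null}_{q_X}(P(D))$ is exactly the negation of the condition appearing in Proposition \ref{propborder}, so $D$ cannot lie on the boundary of any chamber, in particular not on $\partial\Sigma_P$. Combining gives $D\in\mathrm{int}(\Sigma_P)$.

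There is no genuine obstacle here: the corollary is essentially a bookkeeping reformulation of Proposition \ref{propborder}. The only subtlety worth flagging is ensuring that, because the chambers form a disjoint partition of the open set $\mathrm{Big}(X)$, being a boundary point of $\Sigma_P$ is the same as being a boundary point of some chamber, which is what lets us apply Proposition \ref{propborder} directly without needing to track which chamber the perturbations of $D$ slip into.
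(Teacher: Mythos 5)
Your argument is correct and follows the same route as the paper: both deduce the corollary directly from Proposition \ref{propborder} together with the definition of $\Sigma_P$. You simply spell out the bookkeeping — the disjointness of the chambers and the identity $\overline{\Sigma_P}=\mathrm{int}(\Sigma_P)\cup\partial\Sigma_P$ — that the paper leaves implicit, which is a harmless and reasonable elaboration.
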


\begin{rmk}\label{rmkbackzariskidecomp}
From Corollary \ref{stronghodgeindex} and Lemma \ref{bigqnefispositive} it follows that if $P$ is a big and $q_X$-nef integral divisor, and if $\{D_1,\dots,D_k\} \subset \mathrm{Null}_{q_X}(P)$, then each (non-zero) effective linear combination of the $D_i$ is $q_X$-exceptional. In particular, if we choose nonnegative real numbers $a_1,\dots,a_k$, then $D':=P+(\sum_{i=1}^k a_iD_i)$ is exactly the divisorial Zariski decomposition of $D'$, i.e. $P(D')=P$ and $N(D')=\sum_{i=1}^ka_iD_i$.
\end{rmk}

\begin{defn}
Let $P$ be a big and $q_X$-nef divisor. We define the \textit{relative interior} of $\mathrm{Face}(P)$ as the interior of $\mathrm{Face}(P)$ in the topology of $\mathrm{Null}_{q_X}(P)^{\perp}$. We will denote it by $\mathrm{rel.int.Face}(P)$.
\end{defn}

\begin{ex}
Let $M$ be an $\mathbb{R}$-divisor belonging to $\mathrm{int}(\mathrm{Mov}(X))$. Then $\mathrm{rel.int.Face}(M)=\mathrm{int}(\mathrm{Nef}_{q_X}(X))=\mathrm{int}(\mathrm{Mov}(X))$. Indeed $\mathrm{Null}_{q_X}(M)^{\perp}=N^1(X)_{\mathbb{R}}$, and so $\mathrm{rel.int.Face}(M)$ is the interior of the $q_X$-nef cone in the topology of $N^1(X)_{\mathbb{R}}$.
\end{ex}

\begin{rmk}\label{rmkrelint}
With the notation of the above definition, we note the following facts.
\begin{enumerate}
 \item First of all we have $P \in \mathrm{rel.int.Face}(P)$. Indeed, set $\{D_1,\dots,D_k\}=\mathrm{Null}_{q_X}(P)$. Thanks to Corollary \ref{localpolyhed}, we can find an open neighbourhood $U=U(P)\subset \mathrm{Big}(X)$ such that 
\[
U \cap \mathrm{Nef}_{q_X}(X)= U \cap (D_1^{\geq0}\cap \cdots \cap D_k^{\geq 0}).
\] 
It follows that 
\[
U \cap \mathrm{Null}_{q_X}(P)^{\perp} \subset  U \cap (D_1^{\geq0}\cap \cdots \cap D_k^{\geq 0}),
\]
thus we have found an open neighbourhood of $P$ (which is clearly non-empty as $P$ belongs to it) in $\mathrm{Null}_{q_X}(P)^{\perp}$ contained in $\mathrm{Nef}_{q_X}(X)$, i.e. $P \in \mathrm{rel.int.Face}(P)$.

\item It is important to observe that since $Q \in \mathrm{rel.int.Face}(P)$, then we must have $\mathrm{Null}_{q_X}(Q)=\mathrm{Null}_{q_X}(P)$. Indeed $\mathrm{Null}_{q_X}(P) \subseteq \mathrm{Null}_{q_X}(Q)$ and also $\supseteq$ must hold, as otherwise, for $D' \in \mathrm{Null}_{q_X}(Q)\setminus \mathrm{Null}_{q_X}(P)$, we would have $q_X(Q -\epsilon P, D')<0$ for every $\epsilon>0$, thus $Q -\epsilon P$ would not be $q_X$-nef and so $Q$ could not lie in $\mathrm{rel.int.Face}(P)$ (because otherwise $Q -\epsilon P$ would have been $q_X$-nef for every $\epsilon$ small enough).
\end{enumerate}
\end{rmk}

\begin{lem}\label{lemrelintface}
Let $P$ a big and $q_X$-nef $\mathbb{R}$-divisor on $X$, and $Q \in \mathrm{Face}(P)$. Then $Q$ is big if $Q \in \mathrm{rel.int.Face}(P)$. 
\begin{proof}
We can assume that $P$ is integral by the proof of Lemma \ref{chambersequaliff}. Since $Q \in \mathrm{rel.int.Face}(P)$, we can find $0<\epsilon_0 \ll 1$ such that $Q-\epsilon_0 P$ is $q_X$-nef and so pseudo-effective. Indeed, by Remarks \ref{movableconeisqnefcone} and \ref{qnefrmk1}, $\overline{\mathrm{Mov}(X)}=\mathrm{Nef}_{q_X}(X)$, and $\mathrm{Mov}(X) \subseteq \mathrm{Eff}(X)$ by definition, hence $\mathrm{Nef}_{q_X}(X) = \overline{\mathrm{Mov}(X)} \subseteq \overline{\mathrm{Eff}(X)}$.  Set $\{D_1,\dots,D_k\}=\mathrm{Null}_{q_X}(P)=\mathrm{Null}_{q_X}(Q)$. Now,  $P$ is big, so $P+\frac{1}{\epsilon_0}(a_1D_1+\cdots+a_kD_k)$ is big for every nonnegative $a_1,\dots,a_k \in \mathbb{R}$. But 
\[
(Q-\epsilon_0 P)+(\epsilon_0 P + a_1D_1+\cdots +a_kD_k)=Q+a_1D_1+\cdots+a_kD_k=:Q_{a_1\dots a_k}
\]
is big, as it is the sum of a pseudo-effective divisor and a big divisor. Moreover, the positive part of the divisorial Zariski decomposition of $Q_{a_1 \dots a_k}$ is exactly $Q$ and its negative part is exactly $\sum_{i=1}^ka_iD_i$, by Remark \ref{rmkbackzariskidecomp}. Thus $Q$ must be big by Lemma \ref{positivepartisbig}.
\end{proof}
\end{lem}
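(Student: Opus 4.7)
The plan is to realize $Q$ as the positive part of the Boucksom-Zariski decomposition of an explicitly constructed big effective $\mathbb{R}$-divisor, and then invoke Lemma~\ref{positivepartisbig} to conclude that $Q$ itself must be big.

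First, I would like to assume that $P$ is integral, in order to later apply Remark~\ref{rmkbackzariskidecomp}. Since the face $\mathrm{Face}(P)$ and its relative interior depend only on the set $\mathrm{Null}_{q_X}(P)$, Lemma~\ref{lemexcepblock} applied to the $q_X$-exceptional block $\mathrm{Null}_{q_X}(P)$ produces a big, $q_X$-nef, and (after clearing denominators) integral divisor $P'$ with $\mathrm{Null}_{q_X}(P') = \mathrm{Null}_{q_X}(P)$, so I may replace $P$ by $P'$ at no cost.

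Next, by item (1) of Remark~\ref{rmkrelint} the divisor $P$ itself lies in $\mathrm{Face}(P)$, so the segment from $Q$ in the direction of $-P$ stays in $\mathrm{Null}_{q_X}(P)^{\perp}$. Since $Q \in \mathrm{rel.int.Face}(P)$, for all sufficiently small $\epsilon_0 > 0$ the class $Q - \epsilon_0 P$ remains in $\mathrm{Face}(P)$, in particular it is $q_X$-nef, and by the chain $\mathrm{Nef}_{q_X}(X) = \overline{\mathrm{Mov}(X)} \subseteq \overline{\mathrm{Eff}(X)}$ (Remarks~\ref{movableconeisqnefcone} and~\ref{qnefrmk1}) it is pseudo-effective. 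Setting $\{D_1,\dots,D_k\} := \mathrm{Null}_{q_X}(P) = \mathrm{Null}_{q_X}(Q)$ (the latter equality by item (2) of Remark~\ref{rmkrelint}) and choosing arbitrary non-negative reals $a_1,\dots,a_k$, the identity
\[
Q + \sum_{i=1}^k a_i D_i \;=\; (Q - \epsilon_0 P) \;+\; \Bigl(\epsilon_0 P + \sum_{i=1}^k a_i D_i \Bigr)
\]
exhibits the left-hand side as a sum of a pseudo-effective divisor and a big divisor (since $P$ is big and each $D_i$ is effective), hence big.

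Finally, Remark~\ref{rmkbackzariskidecomp} identifies the right-hand grouping $Q + \bigl(\sum_i a_i D_i\bigr)$ as the Boucksom-Zariski decomposition of $Q + \sum_i a_i D_i$: its positive part is $Q$ and its negative part is $\sum_i a_i D_i$. Lemma~\ref{positivepartisbig}, applied to this big effective $\mathbb{R}$-divisor, immediately yields that $Q$ is big. The only delicate point in the argument is the existence of a strictly positive $\epsilon_0$ for which $Q - \epsilon_0 P \in \mathrm{Face}(P)$; this is exactly what the relative-interior hypothesis on $Q$ provides, since it gives an $\mathrm{Null}_{q_X}(P)^{\perp}$-open neighbourhood of $Q$ contained in $\mathrm{Nef}_{q_X}(X)$ while $P$ lies in $\mathrm{Null}_{q_X}(P)^{\perp}$, so sliding $Q$ slightly toward $-P$ cannot leave the face.
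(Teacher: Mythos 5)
Your proof is correct and follows essentially the same route as the paper: reduce to integral $P$, use the relative-interior hypothesis to find $\epsilon_0>0$ with $Q-\epsilon_0 P$ $q_X$-nef (hence pseudo-effective), write $Q+\sum a_iD_i$ as the sum of a pseudo-effective and a big divisor, identify $Q$ as its positive part via Remark~\ref{rmkbackzariskidecomp}, and conclude by Lemma~\ref{positivepartisbig}. The only cosmetic difference is that you justify the integrality reduction by appealing directly to Lemma~\ref{lemexcepblock} rather than to the proof of Lemma~\ref{chambersequaliff}, and you spell out more explicitly why sliding $Q$ toward $-P$ stays in $\mathrm{Face}(P)$; both are fine and match the underlying argument.
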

We now describe the big part of the closure of a given chamber in terms of $q_X$-null loci and $q_X$-negative loci.
\begin{prop}\label{analyticcharact}
Let $P$ be a big and $q_X$-nef $\mathbb{R}$-divisor. Then
\begin{equation}\label{equationchamber}
\mathrm{Big}(X) \cap \overline{\Sigma_{P}}= \left\{D \in \mathrm{Big}(X) \; | \; \mathrm{Neg}_{q_X}(D) \subset \mathrm{Null}_{q_X}(P) \subset \mathrm{Null}_{q_X}(P(D))\right\}
\end{equation}
\begin{proof}
Again, by Lemma \ref{chambersequaliff}, we can assume that $P$ is integral. We first show $\supseteq$. Pick $D=P(D)+N(D)$ like in the right-hand side of (\ref{equationchamber}), where as usual $P(D)+N(D)$ is the divisorial Zariski decomposition of $D$.
As $\mathrm{Null}_{q_X}(P) \subset \mathrm{Null}_{q_X}(P(D))$, then $P(D) \in \mathrm{Face}(P)$. Indeed we have $\mathrm{Null}_{q_X}(P(D))^{\perp} \subset \mathrm{Null}_{q_X}(P)^{\perp}$ and so $P(D) \in \mathrm{Face}(P(D)) \subset \mathrm{Face}(P)$. By Lemma \ref{lemrelintface} we can approximate $P(D)$ with a sequence $\{Q_n\}_n$ of big divisors contained in $\mathrm{rel.int.Face}(P) \neq \emptyset$.
Moreover, by Remark \ref{rmkrelint}, we also have $\mathrm{Null}_{q_X}(Q_n)=\mathrm{Null}_{q_X}(P)$. On the other hand, if $\{D_1,\dots,D_j\}=\mathrm{Null}_{q_X}(P) \setminus \mathrm{Neg}_{q_X}(D)$,  then , by Remark \ref{rmkbackzariskidecomp},
\[
\left\{N_n:=N(D)+\frac{1}{n}\sum_{i=1}^jD_i \right\}_n 
\]
 is a sequence of $q_X$-exceptional divisors converging to $N(D)$.
If we let $D_n:=Q_n+N_n$, we obtain a sequence $\{D_n\}_n$ converging to $D$ and $Q_n=P(D_n)$ (resp. $N(D_n)=N_n$) is the positive (resp. negative) part of the divisorial Zariski decomposition of $D_n$. So, by construction, 
\[
\mathrm{Neg}_{q_X}(D_n)=\mathrm{Null}_{q_X}(P)=\mathrm{Null}_{q_X}(Q_n)=\mathrm{Null}_{q_X}(P(D_n)).
\]
 By Corollary \ref{interiorchamber}, each $D_n$ belongs to $\mathrm{int}(\Sigma_P)$, and so $D \in \mathrm{Big}(X)\cap \overline{\Sigma_P}$.
 
 Now we prove $\subseteq$. Let $\{D_n\}_n$ be a sequence of divisors in $\mathrm{int}(\Sigma_P)$ converging to $D$. If $D_n=P(D_n)+N(D_n)$ is the divisorial Zariski decomposition of each term in the sequence, then
 \[
 \mathrm{Neg}_{q_X}(D_n)=\mathrm{Null}_{q_X}(P)=\mathrm{Null}_{q_X}(P(D_n)),
 \]
 which in turn implies
 \[
 P(D_n) \in \mathrm{Face}(P) \mathrm{ \;and\; } N(D_n) \in \left<\mathrm{Null}_{q_X}(P)\right>.
 \]
 Since $\mathrm{Null}_{q_X}(P)^{\perp}$ and $\left<\mathrm{Null}_{q_X}(P)\right>$ are closed and orthogonal, it follows that $\{P(D_n)\}_n$ converges to a $q_X$-nef $\mathbb{R}$-divisor belonging to $\mathrm{Null}_{q_X}(P)^{\perp}$, and $\{N(D_n)\}_n$ converges to an element in $\left<\mathrm{Null}_{q_X}(P)\right>$ . Now, if we set $\mathrm{Null}_{q_X}(P)=\{D_1,\dots, D_k\}$, we can write $N(D_n)=\sum_{i=1}^ka^{(n)}_i D_i$, and each $a^{(n)}_i$ is positive. As the $D_j$ are linearly independent in $N^1(X)_{\mathbb{R}}$, it follows that $\{N(D_n)\}_n$ converges to an element of the form $\sum_{i=1}^ka_iD_i$, where some $a_i$ can be $0$. From all the above it is clear that 
 \[
 \{P(D_n)\}_n\to P(D) \in \mathrm{Null}_{q_X}(P)^{\perp} \mathrm{ \;and \;} \{N(D_n)\}_n \to N(D).
 \] 
 The first condition gives $\mathrm{Null}_{q_X}(P) \subset \mathrm{Null}_{q_X}(P(D))$, and the fact that $N(D)=\sum_{i=1}^ka_iD_i$ (where some $a_i$ can be $0$) implies that $\mathrm{Neg}_{q_X}(D) \subset \mathrm{Null}_{q_X}(P)$. 
\end{proof}
\end{prop}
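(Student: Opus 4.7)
The plan is to prove the two inclusions separately, using in an essential way the machinery developed above: Theorem \ref{thm1} (existence and uniqueness of the Boucksom-Zariski decomposition), Corollary \ref{interiorchamber} (characterization of the interior of a chamber), Remark \ref{rmkbackzariskidecomp} (recognition criterion for the Boucksom-Zariski decomposition), and Lemma \ref{lemrelintface} (bigness inside the relative interior of $\mathrm{Face}(P)$). As in Lemma \ref{chambersequaliff}, one may reduce to the case where $P$ is integral.

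For the inclusion $\supseteq$, take $D$ on the right-hand side with Boucksom-Zariski decomposition $D=P(D)+N(D)$. First I would observe that $P(D)\in \mathrm{Face}(P)$: the hypothesis $\mathrm{Null}_{q_X}(P)\subset \mathrm{Null}_{q_X}(P(D))$ forces $\mathrm{Null}_{q_X}(P(D))^{\perp}\subset \mathrm{Null}_{q_X}(P)^{\perp}$, and $P(D)$ lies in the former. Lemma \ref{lemrelintface} then allows me to approximate $P(D)$ by a sequence $\{Q_n\}$ of big classes in $\mathrm{rel.int.Face}(P)$, and by Remark \ref{rmkrelint} one has $\mathrm{Null}_{q_X}(Q_n)=\mathrm{Null}_{q_X}(P)$. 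Enumerate the missing components $\mathrm{Null}_{q_X}(P)\setminus \mathrm{Neg}_{q_X}(D)=\{E_1,\dots,E_j\}$ and set
\[
N_n := N(D)+\tfrac{1}{n}\sum_{i=1}^{j}E_i,\qquad D_n:=Q_n+N_n.
\]
All irreducible components of $N_n$ lie in $\mathrm{Null}_{q_X}(P)=\mathrm{Null}_{q_X}(Q_n)$, so Remark \ref{rmkbackzariskidecomp} identifies $Q_n+N_n$ as the Boucksom-Zariski decomposition of $D_n$; in particular $\mathrm{Neg}_{q_X}(D_n)=\mathrm{Null}_{q_X}(P)=\mathrm{Null}_{q_X}(P(D_n))$, so by Corollary \ref{interiorchamber} we have $D_n\in \mathrm{int}(\Sigma_P)$, and clearly $D_n\to D$.

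For the inclusion $\subseteq$, pick $\{D_n\}\subset \mathrm{int}(\Sigma_P)$ with $D_n\to D$. By Corollary \ref{interiorchamber} each $P(D_n)$ lies in $\mathrm{Null}_{q_X}(P)^{\perp}$, while each $N(D_n)$ lies in $V:=\langle \mathrm{Null}_{q_X}(P)\rangle$. Since $\mathrm{Null}_{q_X}(P)$ is a $q_X$-exceptional block (Definition \ref{qexcblock}), the form $q_X$ is negative-definite on $V$, hence $V\cap V^{\perp}=0$ and $N^{1}(X)_{\mathbb{R}}=\mathrm{Null}_{q_X}(P)^{\perp}\oplus V$. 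Splitting $D_n$ along this direct sum and using continuity of the projections, both sequences $\{P(D_n)\}$ and $\{N(D_n)\}$ converge; writing the limits $P^{\ast}$ and $N^{\ast}$, one has $P^{\ast}\in \mathrm{Nef}_{q_X}(X)\cap \mathrm{Null}_{q_X}(P)^{\perp}$ (the $q_X$-nef cone is closed) and $N^{\ast}=\sum a_iD_i$ with $a_i\geq 0$ for $\mathrm{Null}_{q_X}(P)=\{D_1,\dots,D_k\}$. By the uniqueness part of Theorem \ref{thm1} applied to $D=P^{\ast}+N^{\ast}$, we conclude $P^{\ast}=P(D)$ and $N^{\ast}=N(D)$, giving $\mathrm{Neg}_{q_X}(D)\subset \mathrm{Null}_{q_X}(P)$ and $\mathrm{Null}_{q_X}(P)\subset \mathrm{Null}_{q_X}(P(D))$.

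The main subtlety lies in the $\supseteq$ direction: one must construct the approximating sequence so that the naive decomposition $D_n=Q_n+N_n$ really is the Boucksom-Zariski decomposition of $D_n$. This requires simultaneously that $Q_n$ be big and $q_X$-nef, that $Q_n$ be $q_X$-orthogonal to every component of $N_n$, and that the Gram matrix of the components of $N_n$ be negative-definite; all of these are packaged by working inside $\mathrm{rel.int.Face}(P)$ and invoking Remark \ref{rmkbackzariskidecomp}, which is precisely why the preceding sequence of lemmas was set up.
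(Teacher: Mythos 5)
Your proposal is correct and follows essentially the same route as the paper: the $\supseteq$ direction approximates $P(D)$ inside $\mathrm{rel.int.Face}(P)$ via Lemma~\ref{lemrelintface} and perturbs $N(D)$ by $\tfrac{1}{n}\sum E_i$, then invokes Remark~\ref{rmkbackzariskidecomp} and Corollary~\ref{interiorchamber}; the $\subseteq$ direction splits along $\mathrm{Null}_{q_X}(P)^{\perp}\oplus\langle\mathrm{Null}_{q_X}(P)\rangle$ and passes to the limit. The only cosmetic difference is that you make the final identification of the limits with $P(D)$ and $N(D)$ explicit by citing uniqueness in Theorem~\ref{thm1}, which the paper leaves as ``it is clear.''
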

We are ready to describe geometrically the big part of the closure of any chamber. In what follows $V^{\geq 0}(M)$ will denote the subcone of $N^1(X)_{\mathbb{R}}$ generated by a subset $M \subset N^1(X)_{\mathbb{R}}$ and $V^{>0}(M)$ its interior.
\begin{prop}\label{charactchambers}
Let $P$ be a big and $q_X$-nef $\mathbb{R}$-divisor. We have
\[
\mathrm{Big}(X) \cap \overline{\Sigma_P}=\left(\mathrm{Big}(X) \cap \mathrm{Face}(P)\right)+ V^{\geq 0}\left(\mathrm{Null}_{q_X}(P)\right).
\]
\begin{proof}
As usual, we can assume that $P$ is integral. We first prove $\subseteq$. Let $D \in \mathrm{Big}(X) \cap \overline{\Sigma_P}$.  By Proposition \ref{analyticcharact} we know that this holds if and only if
\[
\mathrm{Neg}_{q_X}(D)\subset \mathrm{Null}_{q_X}(P) \subset \mathrm{Null}_{q_X}(P(D)).
\]
Then $P(D) \in \mathrm{Null}_{q_X}(P(D))^{\perp} \subset \mathrm{Null}_{q_X}(P)^{\perp}$. We also know that $P(D)$ is big by Lemma \ref{positivepartisbig} and $q_X$-nef by construction, thus $P(D) \in \mathrm{Face}(P)\cap \mathrm{Big}(X)$. Now, $\mathrm{Neg}_{q_X}(D) \subset \mathrm{Null}_{q_X}(P)$, thus $N(D) \in V^{\geq 0}\left(\mathrm{Null}_{q_X}(P)\right)$, hence $D \in \left(\mathrm{Big}(X) \cap \mathrm{Face}(P)\right)+ V^{\geq 0}\left(\mathrm{Null}_{q_X}(P)\right)$.

Now we prove $\supseteq$. Suppose that 
\[
D \in \left(\mathrm{Big}(X) \cap \mathrm{Face}(P)\right) + V^{\geq 0}\left(\mathrm{Null}_{q_X}(P)\right).
\]
Write 
\[
D=Q+M,
\]
where $Q \in \mathrm{Big}(X) \cap \mathrm{Face}(P)$, and $M \in V^{\geq 0}\left(\mathrm{Null}_{q_X}(P)\right)$. Clearly $Q$ and $M$ are $q_X$-orthogonal, by definition of $\mathrm{Face}(P)$. Moreover, $M$ is $q_X$-exceptional, by Corollary \ref{stronghodge}, and $Q$ is $q_X$-nef. It follows that $Q=P(D)$ and $M=N(D)$, by the uniqueness of the divisorial Zariski decomposition. Of course we have $\mathrm{Neg}_{q_X}(D)=\mathrm{Neg}_{q_X}(M)\subset \mathrm{Null}_{q_X}(P)$, and to conclude, as $Q=P(D) \in \mathrm{Null}_{q_X}(P)^{\perp}$, we also have $\mathrm{Null}_{q_X}(P) \subset \mathrm{Null}_{q_X}(P(D))$, and so we are done by Proposition \ref{analyticcharact}.
\end{proof}
\end{prop}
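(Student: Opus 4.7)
The plan is to leverage Proposition \ref{analyticcharact}, which already recasts $\mathrm{Big}(X) \cap \overline{\Sigma_P}$ as the set of big classes $D$ satisfying the double inclusion $\mathrm{Neg}_{q_X}(D) \subset \mathrm{Null}_{q_X}(P) \subset \mathrm{Null}_{q_X}(P(D))$. The remaining task is essentially a translation: interpret these two inclusions as the statement that $D$ splits as a sum of an element of $\mathrm{Big}(X) \cap \mathrm{Face}(P)$ and a nonnegative combination of divisors in $\mathrm{Null}_{q_X}(P)$.

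For the inclusion $\subseteq$, I would start with $D \in \mathrm{Big}(X) \cap \overline{\Sigma_P}$ and use its Boucksom-Zariski decomposition $D = P(D) + N(D)$. By Lemma \ref{positivepartisbig}, $P(D)$ is big and $q_X$-nef. The inclusion $\mathrm{Null}_{q_X}(P) \subset \mathrm{Null}_{q_X}(P(D))$ gives the reverse inclusion of orthogonal complements, so $P(D) \in \mathrm{Null}_{q_X}(P(D))^{\perp} \subset \mathrm{Null}_{q_X}(P)^{\perp}$, placing $P(D)$ in $\mathrm{Face}(P) \cap \mathrm{Big}(X)$. The other inclusion $\mathrm{Neg}_{q_X}(D) \subset \mathrm{Null}_{q_X}(P)$ is exactly the statement that $N(D)$, being a nonnegative combination of its components, lies in $V^{\geq 0}(\mathrm{Null}_{q_X}(P))$.

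For the inclusion $\supseteq$, suppose $D = Q + M$ with $Q \in \mathrm{Big}(X)\cap \mathrm{Face}(P)$ and $M \in V^{\geq 0}(\mathrm{Null}_{q_X}(P))$. Then $D$ is big as a sum of a big and an effective class. The crucial point is that $Q + M$ is \emph{already} the Boucksom-Zariski decomposition of $D$; this follows from uniqueness in Theorem \ref{thm1} once I verify its three conditions: $Q$ is $q_X$-nef by definition of $\mathrm{Face}(P)$; every irreducible component of $M$ lies in $\mathrm{Null}_{q_X}(P)$ and $Q \in \mathrm{Null}_{q_X}(P)^{\perp}$, giving the $q_X$-orthogonality; and $M$ is $q_X$-exceptional. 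For the last point, I would invoke Lemma \ref{bigqnefispositive} to get $q_X(P) > 0$ and then Corollary \ref{stronghodgeindex} applied to $P$ to conclude that $q_X$ is negative-definite on $P^{\perp} \supset \left<\mathrm{Null}_{q_X}(P)\right>$, so the Gram matrix of the support of $M$ is negative-definite (this is also recorded in Remark \ref{rmkbackzariskidecomp}). Identifying $P(D) = Q$ and $N(D) = M$, the inclusion $\mathrm{Neg}_{q_X}(D) \subset \mathrm{Null}_{q_X}(P)$ is immediate, while $\mathrm{Null}_{q_X}(P) \subset \mathrm{Null}_{q_X}(P(D))$ follows because $P(D) = Q \in \mathrm{Null}_{q_X}(P)^{\perp}$. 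Proposition \ref{analyticcharact} then yields $D \in \mathrm{Big}(X) \cap \overline{\Sigma_P}$.

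I expect no serious obstacle: the bulk of the work has already been done in Proposition \ref{analyticcharact} (whose proof needed the approximation argument and the integrality reduction) and in the Hodge-index machinery of Section 1. The only place that requires care is checking that $M$ is genuinely $q_X$-exceptional in the $\supseteq$ direction, but once the Strong Hodge-index Theorem is available this is routine.
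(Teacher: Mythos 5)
Your proposal is correct and matches the paper's proof essentially line for line: both directions reduce to Proposition \ref{analyticcharact}, the $\subseteq$ direction splits $D$ via its Boucksom-Zariski decomposition and uses Lemma \ref{positivepartisbig}, and the $\supseteq$ direction verifies that $Q+M$ is already the Boucksom-Zariski decomposition. Your citation of Lemma \ref{bigqnefispositive} together with Corollary \ref{stronghodgeindex} (rather than Corollary \ref{stronghodge} alone) for the $q_X$-exceptionality of $M$ is in fact the more precise justification, since negative-definiteness of the full Gram matrix — not just negativity of each diagonal entry — is what is needed.
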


\begin{cor}\label{corlocpolyhed}
Let $P$ be a big and $q_X$-nef $\mathbb{R}$-divisor. Then $\overline{\Sigma_P}$ is locally rational polyhedral at every big point.
\end{cor}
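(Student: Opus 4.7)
The plan is to exploit the Minkowski sum description provided by Proposition \ref{charactchambers} together with a rational direct-sum decomposition of $N^1(X)_{\mathbb{R}}$ induced by the $q_X$-exceptional block $\mathrm{Null}_{q_X}(P) = \{D_1, \ldots, D_k\}$. Since the Gram matrix $\bigl(q_X(D_i,D_j)\bigr)$ is negative-definite (hence invertible), the subspaces $\mathrm{Null}_{q_X}(P)^{\perp}$ and $\langle [D_1],\dots,[D_k]\rangle$ intersect trivially, yielding a rational direct-sum splitting $N^1(X)_{\mathbb{R}} = \mathrm{Null}_{q_X}(P)^{\perp} \oplus \langle [D_1], \ldots, [D_k] \rangle$. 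Let $\pi_1, \pi_2$ denote the associated continuous rational linear projections.

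Fix a big point $\alpha \in \overline{\Sigma_P}$. By Proposition \ref{analyticcharact}, the Boucksom-Zariski decomposition $\alpha = P(\alpha) + N(\alpha)$ already respects this splitting: $P(\alpha) \in \mathrm{Null}_{q_X}(P(\alpha))^{\perp} \subset \mathrm{Null}_{q_X}(P)^{\perp}$ and $N(\alpha)$ is a nonnegative combination of the $D_i$, so $\pi_1(\alpha) = P(\alpha)$ and $\pi_2(\alpha) = N(\alpha)$. By Lemma \ref{positivepartisbig}, $P(\alpha)$ is big and $q_X$-nef, so Corollary \ref{localpolyhed} furnishes an open neighbourhood $U_0$ of $P(\alpha)$, which I shrink to lie inside $\mathrm{Big}(X)$, in which $\mathrm{Nef}_{q_X}(X)$ is cut out by finitely many rational linear inequalities. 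By continuity of $\pi_1$, the set $U := \pi_1^{-1}(U_0) \cap \mathrm{Big}(X)$ is then an open neighbourhood of $\alpha$.

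The main step, which I expect to be the chief obstacle, is to verify the equivalence: for $\xi \in U$, one has $\xi \in \overline{\Sigma_P}$ if and only if $\pi_1(\xi) \in \mathrm{Nef}_{q_X}(X)$ and the $[D_i]$-coordinates of $\pi_2(\xi)$ are all nonnegative. For the backward implication, these two conditions place $\pi_1(\xi)$ in $\mathrm{Big}(X) \cap \mathrm{Face}(P)$ (using $U_0 \subset \mathrm{Big}(X)$ and $\pi_1(\xi) \in \mathrm{Null}_{q_X}(P)^{\perp}$ by construction) and $\pi_2(\xi) \in V^{\geq 0}(\mathrm{Null}_{q_X}(P))$, so $\xi \in \overline{\Sigma_P}$ by Proposition \ref{charactchambers}. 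For the forward implication, any decomposition $\xi = \beta + \gamma$ provided by Proposition \ref{charactchambers} must coincide with $\pi_1(\xi) + \pi_2(\xi)$ by uniqueness of the direct-sum splitting, whence the required inequalities follow. Once this equivalence is in place, $\overline{\Sigma_P} \cap U$ is defined in $U$ by finitely many rational linear inequalities -- those cutting out $\mathrm{Nef}_{q_X}(X)$ inside $U_0$ pulled back via $\pi_1$, together with the $k$ inequalities on the $[D_i]$-coordinates of $\pi_2$ -- yielding local rational polyhedrality at $\alpha$.
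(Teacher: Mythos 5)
Your proof is correct and follows essentially the same route as the paper: both rely on the Minkowski-sum description from Proposition \ref{charactchambers}, use Lemma \ref{positivepartisbig} to place $P(\alpha)$ in $\mathrm{Big}(X)\cap\mathrm{Face}(P)$, invoke Corollary \ref{localpolyhed} for the local rational polyhedrality of $\mathrm{Nef}_{q_X}(X)$ near $P(\alpha)$, and exploit the rational splitting $N^1(X)_{\mathbb{R}}=\mathrm{Null}_{q_X}(P)^{\perp}\oplus\langle [D_1],\dots,[D_k]\rangle$ coming from the negative-definiteness of the Gram matrix. The only cosmetic difference is the choice of neighbourhood (your $\pi_1^{-1}(U_0)\cap\mathrm{Big}(X)$ versus the paper's $U+V^{\geq 0}(\mathrm{Null}_{q_X}(P))$), and you make the uniqueness of the direct-sum decomposition explicit, which the paper leaves implicit.
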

\begin{proof}
We can assume that $P$ is integral, because, according to the proof of Lemma \ref{lemexcepblock}, any Boucksom-Zariski chamber is associated with a big and $q_X$-nef integral divisor. Let $\alpha \in \mathrm{Big(X)}\cap \overline{\Sigma}_P$ and write $\alpha=P(\alpha)+N(\alpha)$ for its divisorial Zariski decomposition. Moreover, set $\mathrm{Null}_{q_X}(P)=\{D_1,\dots,D_k\}$. By Proposition \ref{charactchambers} we have 
\[
\mathrm{Big}(X) \cap \overline{\Sigma_P}=\mathrm{Face}(P) \cap \mathrm{Big}(X)+ V^{\geq 0}\left(\mathrm{Null}_{q_X}(P)\right).
\]
The part $V^{\geq 0}\left(\mathrm{Null}_{q_X}(P)\right)$ is clearly rational. We know that $P(\alpha) \in \mathrm{Face}(P) \cap \mathrm{Big}(X)$, by Lemma \ref{positivepartisbig}. Let $U=U(P(\alpha))$ be a rational neighborhood like the one in Lemma \ref{lem1}. Then, by Corollary \ref{localpolyhed}, we obtain
\begin{equation}\label{equationlocpolyhed}
U \cap \mathrm{Face}(P) \cap \mathrm{Big}(X)=U \cap \mathrm{Nef}_{q_X}(P) \cap \mathrm{Null}_{q_X}(P)^{\perp}= U \cap \left(D_1^{\perp}\cap \dots \cap D_k^{\perp}\right).
\end{equation}
By (\ref{equationlocpolyhed}), the neighborhood $\left(U+V^{\geq 0}\left(\mathrm{Null}_{q_X}(P)\right)\right) \cap \overline{\Sigma_{P}}$ of $\alpha$ is cut out by finitely many rational closed half-spaces, hence we are done.
\end{proof}
We now give a geometric description of the interior of any chamber. 

\begin{prop}
Let $P$ be a big and $q_X$-nef $\mathbb{R}$-divisor. The interior of the chamber $\Sigma_P$ is equal to
\[
\mathrm{rel.int.Face}(P)+V^{>0}\left(\mathrm{Null}_{q_X}(P)\right).
\]
\begin{proof}
We can assume that $P$ is integral, because, according to the proof of Lemma \ref{lemexcepblock}, any Boucksom-Zariski chamber is associated with a big and $q_X$-nef integral divisor. Let $D \in \mathrm{int}(\Sigma_P)$. By Corollary \ref{interiorchamber} we have
\[
\mathrm{Neg}_{q_X}(D)=\mathrm{Null}_{q_X}(P)=\mathrm{Null}_{q_X}(P(D)).
\]
Then $\mathrm{Face}(P)=\mathrm{Face}(P(D))$, hence, by item 1 of Remark \ref{rmkrelint}, $P(D) \in \mathrm{rel.int.Face}(P)$. Moreover $N(D) \in V^{>0}(\mathrm{Null}_{q_X}(P))$, because $\mathrm{Neg}_{q_X}(D)=\mathrm{Null}_{q_X}(P)$, thus
\[
D \in \mathrm{rel.int.Face}(P)+V^{>0}\left(\mathrm{Null}_{q_X}(P)\right).
\]
Now, assume that $D \in \mathrm{rel.int.Face}(P)+V^{>0}\left(\mathrm{Null}_{q_X}(P)\right)$. Write 
\[
D=Q+M,
\]
where $Q \in \mathrm{rel.int.Face}(P)$ and $M \in V^{>0}\left(\mathrm{Null}_{q_X}(P)\right)$. By Lemma \ref{lemrelintface} $Q$ is big, hence $D$ is big, as it is the sum of a big divisor and an effective divisor. By Remark \ref{rmkbackzariskidecomp} $M$ is $q_X$-exceptional. Moreover $Q$ is $q_X$-nef, and $q_X(Q,M)=0$. It follows from the uniqueness of the divisorial Zariski decomposition that $Q=P(D)$ and $M=N(D)$, i.e. $D=Q+M$ is the divisorial Zariski decomposition of $D$. Clearly we have $\mathrm{Neg}_{q_X}(D)=\mathrm{Null}_{q_X}(P)$, and by item 2 of Remark \ref{rmkrelint} we also have $\mathrm{Null}_{q_X}(P)=\mathrm{Null}_{q_X}(P(D))$. Thus we have
\[
\mathrm{Neg}_{q_X}(D)=\mathrm{Null}_{q_X}(P)=\mathrm{Null}_{q_X}(P(D)),
\]
and by Corollary \ref{interiorchamber} we are done.
\end{proof}
\end{prop}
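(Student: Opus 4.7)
The plan is to prove the two inclusions separately, relying on the explicit characterization of $\mathrm{int}(\Sigma_P)$ given in Corollary \ref{interiorchamber} together with the properties of the relative interior of $\mathrm{Face}(P)$ recorded in Remark \ref{rmkrelint}. As a preliminary reduction, I would assume $P$ is integral; this is harmless, since by the construction in the proof of Lemma \ref{lemexcepblock} every Boucksom–Zariski chamber arises from some big, $q_X$-nef \emph{integral} divisor.

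For the inclusion $\mathrm{int}(\Sigma_P)\subseteq \mathrm{rel.int.Face}(P)+V^{>0}(\mathrm{Null}_{q_X}(P))$, I would take $D\in \mathrm{int}(\Sigma_P)$ and apply Corollary \ref{interiorchamber} to get the chain $\mathrm{Neg}_{q_X}(D)=\mathrm{Null}_{q_X}(P)=\mathrm{Null}_{q_X}(P(D))$. The last equality shows $\mathrm{Face}(P(D))=\mathrm{Face}(P)$, so Remark \ref{rmkrelint}(1) applied to $P(D)$ itself puts $P(D)\in\mathrm{rel.int.Face}(P(D))=\mathrm{rel.int.Face}(P)$. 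The first equality in the chain forces every element of $\mathrm{Null}_{q_X}(P)$ to appear with strictly positive coefficient in $N(D)$, giving $N(D)\in V^{>0}(\mathrm{Null}_{q_X}(P))$, and hence $D=P(D)+N(D)$ lies in the target sum.

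For the reverse inclusion I would start with a decomposition $D=Q+M$, where $Q\in\mathrm{rel.int.Face}(P)$ and $M\in V^{>0}(\mathrm{Null}_{q_X}(P))$. The key point is to verify that this is precisely the Boucksom–Zariski decomposition of $D$: Lemma \ref{lemrelintface} yields that $Q$ is big, so $D$ is big; $Q$ is $q_X$-nef since $\mathrm{Face}(P)\subseteq\mathrm{Nef}_{q_X}(X)$; $M$ is $q_X$-exceptional by Remark \ref{rmkbackzariskidecomp} (applied to the integral divisor whose null locus equals $\mathrm{Null}_{q_X}(P)$); and the orthogonality $q_X(Q,M)=0$ holds because $Q\in\mathrm{Null}_{q_X}(P)^{\perp}$ while $M$ lies in the span of $\mathrm{Null}_{q_X}(P)$. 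The uniqueness statement of Theorem \ref{thm1} then identifies $P(D)=Q$ and $N(D)=M$. Combining $\mathrm{Neg}_{q_X}(D)=\mathrm{Null}_{q_X}(P)$ (immediate from $M\in V^{>0}(\mathrm{Null}_{q_X}(P))$) with Remark \ref{rmkrelint}(2), which gives $\mathrm{Null}_{q_X}(P(D))=\mathrm{Null}_{q_X}(Q)=\mathrm{Null}_{q_X}(P)$, one obtains the full chain required by Corollary \ref{interiorchamber}, so $D\in\mathrm{int}(\Sigma_P)$.

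The main subtlety is really the second inclusion, where one must certify that the candidate decomposition $Q+M$ coincides with the Boucksom–Zariski decomposition of $D$ before invoking Corollary \ref{interiorchamber}; everything else is a direct unfolding of the relevant definitions and of the previously established remarks. No extra positivity input is needed beyond what is already packaged in Lemma \ref{lemrelintface} and Remark \ref{rmkbackzariskidecomp}.
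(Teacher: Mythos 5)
Your proposal is correct and mirrors the paper's own argument step by step: same reduction to $P$ integral, same use of Corollary \ref{interiorchamber} and Remark \ref{rmkrelint}(1) for the forward inclusion, and the same verification (bigness via Lemma \ref{lemrelintface}, $q_X$-exceptionality via Remark \ref{rmkbackzariskidecomp}, orthogonality, then uniqueness of the Boucksom-Zariski decomposition plus Remark \ref{rmkrelint}(2) and Corollary \ref{interiorchamber}) for the reverse inclusion. The only difference is that you spell out a few justifications (why $Q$ is $q_X$-nef, why $q_X(Q,M)=0$) that the paper leaves implicit.
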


Our next goal is to prove that the decomposition of $\mathrm{Big}(X)$ into Boucksom-Zariski chambers is locally finite, i.e. for each point in $\mathrm{Big}(X)$ there exists a neighborhood meeting only a finite number of Boucksom-Zariski chambers. The key to proving this result is the following.

\begin{rmk}\label{locallyfinitelem}
Let $D$ be a big $\mathbb{R}$-divisor and $A$ an ample $\mathbb{R}$-divisor. Then
\[
\mathrm{Neg}_{q_X}(D+ A) \subset \mathrm{Neg}_{q_X}(D).
\]
 This follows directly from the fact that $P(D+A)$ is the maximal $q_X$-nef subdivisor of $D+A$, and $P(D+A)\geq P(D)+A$.
\end{rmk}

\begin{prop}\label{localfinitenessprop}
The decomposition of the big cone of $X$ into Boucksom-Zariski chambers is locally finite.
\begin{proof}
Let $\mathrm{Amp}(X)$ be the ample cone of $X$. It is sufficient to note that every big $\mathbb{R}$-divisor $D$ has an open neighborhood of the form $D'+\mathrm{Amp}(X)$. Indeed $D\equiv_{\mathrm{num}}A+N$, where $A$ is ample and $N$ is effective.
We can write $A=\sum_{i}a_i A_i$, where the $a_i$ are positive, and the $A_i$ are integral and ample. If we choose $A'=\sum_{i=1}^kb_iA_i$, with $0<b_i<a_i$ for every $i$, then $D'=(A-A')+N$ is still big, and $[D] \in [D']+\mathrm{Amp}(X) \subset N^1(X)_{\mathbb{R}}$. By Remark \ref{locallyfinitelem}, every class $\alpha$ in this neighbourhood satisfy $\mathrm{Neg}_{q_X}(\alpha) \subset \mathrm{Neg}_{q_X}(D)$, and this clearly implies that $[D']+ \mathrm{Amp}(X)$ meets only a finite number of Boucksom-Zariski chambers.
\end{proof}
\end{prop}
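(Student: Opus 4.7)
The plan is to use Remark \ref{locallyfinitelem} as the central ingredient: adding an ample class to a big divisor can only shrink the $q_X$-negative locus. Given any big class $\alpha_0 \in \mathrm{Big}(X)$, the goal is therefore to exhibit an open neighbourhood of $\alpha_0$ whose classes all have $q_X$-negative loci contained in one and the same finite set. Local finiteness will then follow immediately, because each Boucksom-Zariski chamber is uniquely determined by a $q_X$-exceptional block (Corollary \ref{otobzc}), and a finite set admits only finitely many subsets.

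To build such a neighbourhood, I would use bigness of $\alpha_0$ to write $\alpha_0 \equiv_{\mathrm{num}} [A] + [N]$ with $A$ an ample $\mathbb{R}$-divisor and $N$ an effective $\mathbb{R}$-divisor (e.g.\ via Proposition 2.2.22 of \cite{Laz}, already invoked earlier in the excerpt). Decomposing $A = \sum_{i} a_i A_i$ as a positive $\mathbb{R}$-linear combination of integral ample classes and subtracting a slightly smaller ample piece $A' = \sum_{i} b_i A_i$ with $0 < b_i < a_i$, I set $D' := (A - A') + N$. Then $D'$ is still big, and $\alpha_0 = [D'] + [A']$ lies in the open set $U := [D'] + \mathrm{Amp}(X)$, which is open since $\mathrm{Amp}(X)$ is an open convex cone in $N^1(X)_{\mathbb{R}}$.

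By Remark \ref{locallyfinitelem} applied to $D'$ and any ample class, every $\beta \in U$ satisfies $\mathrm{Neg}_{q_X}(\beta) \subset \mathrm{Neg}_{q_X}(D')$, and the latter is a finite set since it consists of irreducible components of an effective representative of $N(D')$. Hence the assignment $\beta \mapsto \mathrm{Neg}_{q_X}(\beta)$ restricted to $U$ takes at most $2^{|\mathrm{Neg}_{q_X}(D')|}$ different values, and by Corollary \ref{otobzc} the chambers containing points of $U$ are in bijection with a subset of these values, hence are finite in number.

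I do not anticipate any real obstacle here: the nontrivial content has already been packaged into Remark \ref{locallyfinitelem} (itself a direct consequence of the maximality characterisation of $P(D)$). The only mildly delicate step is the explicit ``sliding'' construction of $D'$ so as to produce an \emph{open} neighbourhood of the form $[D'] + \mathrm{Amp}(X)$; this is a standard manipulation using that ample classes sit in the interior of the positive cone.
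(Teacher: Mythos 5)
Your proposal follows exactly the same route as the paper: write $\alpha_0 \equiv_{\mathrm{num}} A + N$, peel off a smaller ample piece $A'$ to obtain a big $D' := (A-A')+N$ with $\alpha_0 \in [D']+\mathrm{Amp}(X)$, and then invoke Remark~\ref{locallyfinitelem} to bound all $q_X$-negative loci over this open set by the finite set $\mathrm{Neg}_{q_X}(D')$. In fact your version is slightly more careful than the paper's, which writes $\mathrm{Neg}_{q_X}(\alpha)\subset\mathrm{Neg}_{q_X}(D)$ where the direct application of the remark gives containment in $\mathrm{Neg}_{q_X}(D')$ — your statement is the precise one.
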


We are now ready to prove item 1 of Theorem \ref{mainthm1}.
\bigskip
\begin{proof2}
By Proposition \ref{chambersequaliff}, $\mathrm{Big}(X)$ is the disjoint union of the Boucksom-Zariski chambers, and by definition in each chamber the support of the negative part of the divisorial Zariski decomposition of the divisors in constant. By Corollary \ref{localpolyhed} and Corollary \ref{corlocpolyhed}, every Boucksom-Zariski chamber is locally rational polyhedral, and by Proposition \ref{localfinitenessprop} the decomposition into Boucksom-Zariski chambers is locally finite.
\end{proof2}

The below result shows how the local finiteness of the given decomposition allows us to prove that the divisorial Zariski decomposition varies "continuously" in the big cone.

\begin{prop}\label{continuityboucksomZariski}
Let $\{[D_n]\}_{n}$ be a sequence of big classes converging in $N^1(X)_{\mathbb{R}}$ to a big class $[D]$. If $D_n=P(D_n)+N(D_n)$ is the divisorial Zariski decomposition of $D_n$, and if $D=P(D)+N(D)$ is the divisorial Zariski decomposition of $D$, then $\{[P(D_n)]\}_n$ (respectively $\{[N(D_n)]\}_n$) converges to $[P(D)]$ (respectively $[N(D)]$).
\begin{proof}
Assume first that $\{[D_n]\}_n \subset \Sigma_M$, for some $M$ big and $q_X$-nef. We note that $\{[P(D_n)]\}_n \subset \mathrm{Null}_{q_X}(M)^{\perp}$, hence $\{[P(D_n)]\}_n \to [P] \in \mathrm{Null}_{q_X}(M)^{\perp} \cap \mathrm{Nef}_{q_X}(X)$. Moreover $\{[N(D_n)]\}_n \subset \left<\mathrm{Null}_{q_X}(M)\right>$, hence $\{[N(D_n)]\}_n \to [N] \in \left<\mathrm{Null}_{q_X}(M)\right>$. Set $\mathrm{Null}_{q_X}(M)=\{N_1,\dots,N_k\}$. For each $n$ we can write $N(D_n)=\sum_{i=1}^ka_i^{(n)}N_i$, where $a_i^{(n)}>0$ for each $n$, and for each $i=\{1,\dots,k\}$.  Moreover the classes $[N_i]$ are linearly independent in $N^1(X)_{\mathbb{R}}$, because the matrix $\left(q_X(N_i,N_j)\right)_{i,j}$ is negative-definite. Hence each $\left\{a_i^{(n)}\right\}_n$ converges to a number $a_i\geq 0$, thus we can write  $N=\sum_{i=1}^k a_iN_i$. Putting it all together we obtained:
\begin{enumerate}
\item $q_X(P,N)=0$,
\item $\sum_{i=1}^k a_iN_i=N$ is $q_X$-exceptional,
\item $P$ is $q_X$-nef.
\end{enumerate} 
Then it is clear that $[P]=[P(D)]$ and $N=N(D)$. Assume now that $\{[D_n]\}_{n}$ is contained in more than one chamber. We know by Lemma \ref{locallyfinitelem} that $[D]$ has a neighborhood of the form $[D']+ \mathrm{Amp}(X)$, with $[D']$ a big class, and this neighborhood meets only a finite number of Boucksom-Zariski chambers, by Proposition \ref{localfinitenessprop}. Then one of these chambers must contain an infinite number of terms of $\{[D_n]\}_n$. Hence we are reduced to the case in which $\{[D_n]\}_n$ is contained in one chamber, and we are done.
\end{proof} 
\end{prop}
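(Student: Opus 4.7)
The plan is to first invoke local finiteness to reduce to the case in which the entire sequence $\{[D_n]\}_n$ lies in a single Boucksom-Zariski chamber, and then to exploit the fact that within a single chamber the positive and negative parts lie in complementary subspaces of $N^1(X)_{\mathbb{R}}$, which forces each piece to converge.

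For the reduction, by Proposition \ref{localfinitenessprop} the big class $[D]$ admits an open neighbourhood meeting only finitely many Boucksom-Zariski chambers. Since $[D_n] \to [D]$, the tail of $\{[D_n]\}_n$ lies in this neighbourhood, hence in finitely many chambers; by the pigeonhole principle one of them contains an infinite subsequence. If every subsequence coming from a single chamber converges to the same pair $(P(D), N(D))$, then by uniqueness of limits the full sequences $\{[P(D_n)]\}_n$ and $\{[N(D_n)]\}_n$ converge. So it suffices to treat the case where all $[D_n]$ lie in one chamber $\Sigma_M$, with $M$ big and $q_X$-nef.

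Set $\mathrm{Null}_{q_X}(M)=\{N_1,\dots,N_k\}$. By definition of $\Sigma_M$ and the properties of the Boucksom-Zariski decomposition, for each $n$ one has $N(D_n)=\sum_{i=1}^k a_i^{(n)} N_i$ with $a_i^{(n)}>0$ and $P(D_n)\in \mathrm{Null}_{q_X}(M)^{\perp}$. Because the Gram matrix $\bigl(q_X(N_i,N_j)\bigr)_{i,j}$ is negative-definite, the restriction of $q_X$ to $\langle N_1,\dots,N_k\rangle$ is non-degenerate, so we have a direct sum decomposition
\[
N^1(X)_{\mathbb{R}}=\mathrm{Null}_{q_X}(M)^{\perp}\oplus \langle N_1,\dots,N_k\rangle.
\]
The associated projections are continuous linear maps, so from $D_n = P(D_n)+N(D_n)\to D$ we immediately get $P(D_n)\to P$ and $N(D_n)\to N$ for some $P\in \mathrm{Null}_{q_X}(M)^{\perp}$ and $N\in \langle N_1,\dots,N_k\rangle$ with $D=P+N$.

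It remains to identify $(P,N)$ with $(P(D),N(D))$ by verifying the three defining properties of the Boucksom-Zariski decomposition (Theorem \ref{thm1}) and invoking its uniqueness. Since the $N_i$ are linearly independent in $N^1(X)_{\mathbb{R}}$, each coefficient sequence $\{a_i^{(n)}\}_n$ converges to some $a_i\geq 0$, so $N=\sum_i a_i N_i$ is effective, and its support is a subset of $\{N_1,\dots,N_k\}$; the corresponding Gram matrix is a principal submatrix of a negative-definite one, so $N$ is $q_X$-exceptional. The class $P$ is $q_X$-nef because $q_X(P(D_n),D')\geq 0$ for every prime divisor $D'$ passes to the limit by continuity of $q_X$. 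Finally, $q_X(P(D_n),N_i)=0$ for all $n$ and $i$, so $q_X(P,N_i)=0$ in the limit, which means $P$ is $q_X$-orthogonal to every irreducible component of $N$. The main subtlety to be aware of is that $[D]$ need not lie in $\Sigma_M$ itself: some of the $a_i$ may vanish in the limit and hence $\mathrm{Neg}_{q_X}(D)\subsetneq \mathrm{Null}_{q_X}(M)$, but this does not affect the argument since $q_X$-exceptionality and $q_X$-orthogonality are both preserved under taking subsets of the support.
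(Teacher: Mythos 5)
Your proof is correct and follows essentially the same route as the paper: reduce via local finiteness to a single chamber, then use that positive and negative parts live in the complementary subspaces $\mathrm{Null}_{q_X}(M)^{\perp}$ and $\langle N_1,\dots,N_k\rangle$, and verify the three defining properties of the Boucksom-Zariski decomposition in the limit. You are in fact slightly more explicit than the paper where it helps: you name the continuous linear projections that force $P(D_n)$ and $N(D_n)$ to converge (the paper only asserts this), and you spell out the subsequence argument needed to pass from "one chamber contains infinitely many terms" to convergence of the full sequence.
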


The rest of this section is devoted to proving item $2$ of Theorem \ref{mainthm1}. In particular, we show that the volume function on $\mathrm{Big}(X)$ is locally polynomial. Note that Bauer, K\"uronya and Szemberg in Subsection 3.3 of \cite{Bau} provided a smooth projective threefold whose associated volume function is not locally polynomial. The next two results are crucial for our purposes.

\begin{lem}[Proposition 3.20 in \cite{Bouck}]\label{volumeofdivisors}
Let $D$ be a big $\mathbb{R}$-divisor on $X$, and $P(D)+N(D)$ its divisorial Zariski decomposition. Then $\mathrm{vol}(D)=\mathrm{vol}(P(D))$.
\end{lem}

By Lemma \ref{volumeofdivisors} we know that $\mathrm{vol}(D)=\mathrm{vol}(P(D))$, moreover $P(D)$ is big and $q_X$-nef. For a big and nef $\mathbb{R}$-divisor we know that the volume is given by the top self-intersection (see equation (2.9) in \cite{Laz}), and a priori we cannot say the same for $P(D)$, as it is "only" $q_X$-nef, and of course we could have $\mathrm{Nef}(X) \subsetneq \mathrm{Nef}_{q_X}(X)$. The upshot is the following result.

\begin{prop}[Proposition 4.12 in \cite{Bouck}]\label{upshotprop}
Led $\alpha$ be a big class, with divisorial Zariski decomposition $\alpha=P(\alpha)+N(\alpha)$. Then $\mathrm{vol}(\alpha)=\mathrm{vol}(P(\alpha))=\int_{X}P(\alpha)^{2n}=c_X(q_X(P(\alpha))^n$.
\end{prop}

Now item 2 of Theorem \ref{mainthm1} easily follows from the last proposition.
\bigskip
\begin{proof1}
We already know from Proposition \ref{volumeofdivisors} and \ref{upshotprop} that, for any big $\mathbb{R}$-divisor $D$ with divisorial Zariski decomposition $D=P(D)+N(D)$, $\mathrm{vol}(D)=(P(D))^{2n}$. Let $\Sigma_P$ be a Boucksom-Zariski chamber, and set $\mathrm{Null}_{q_X}(P)=\{D_1,\dots,D_k\}$. We know that the elements of $\mathrm{Null}_{q_X}(P)$ are linearly independent in $N^1(X)_{\mathbb{R}}$. Hence we can choose a basis $\mathcal{B}$ of $N^1(X)_{\mathbb{R}}$ of the form
\[
\mathcal{B}=\{D_1,\dots,D_k,B_{k+1},\dots,B_{\rho}\},
\] 
where $\rho$ is the Picard number of $X$ and $\{B_{k+1},\dots,B_{\rho}\}$ is a basis for $\mathrm{Null}_{q_X}(P)^{\perp}$. Let $D=P(D)+N(D)=\sum_ix_iB_i+N(D)$ be any element of $\Sigma_P$. We have
\[
\mathrm{vol}(D)=\mathrm{vol}(P(D))=(P(D))^{2n},
\]
which is a homogeneous polynomial of degree $2n$ in the variables $x_i$, and we are done.
\end{proof1}

\section{An example and a remark on the stability chambers}\label{section7}
Let us start this section with the following example.
\begin{ex}[Boucksom-Zariski chambers on $\mathrm{Hilb}^2(S)$]\label{exchambers}
This example is based on the following result, which has been taken from the article \cite{Ulrike} of Ulrike Riess.
\begin{prop}[Lemma 3.2 of \cite{Ulrike}]\label{propulrike}
Let $S$ be a K3 surface with $\mathrm{Pic}(S) \cong \mathbb{Z}\cdot A_S$, for an ample line bundle with $(A_S)^2=2$. Consider the irreducible symplectic manifold $Y:=\mathrm{Hilb}^2(S)$, with the usual decomposition $\mathrm{Pic}(Y)\cong \mathbb{Z}\cdot A \oplus \mathbb{Z} \cdot \delta$, where $A$ is the line bundle associated to $A_S$. Then 
\begin{enumerate}
\item $\mathscr{C}_Y \cap \mathrm{N}^1(Y)_{\mathbb{R}}=\langle A+\delta,A-\delta\rangle$,
\item $\overline{\mathscr{BK}_Y} \cap \mathrm{N}^1(Y)_{\mathbb{R}}=\langle A,A-\delta\rangle $,
\item $\mathrm{Nef}(Y)=\langle A,3A-2\delta\rangle \subseteq N^1(Y)_{\mathbb{R}}$,
\item there is a unique other birational model $Y'$ of $Y$ which is an irreducible symplectic manifold. This satisfies
\[
\mathrm{Nef}(Y')=\langle 3A'-2\delta',A'-\delta' \rangle \subseteq N^1(Y')_{\mathbb{R}},
\]
where $A',\delta' \in \mathrm{Pic}(Y')$ are the line bundles which correspond to $A$ and $\delta$ via the birational transform.
\end{enumerate}
\end{prop}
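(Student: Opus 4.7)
The strategy exploits the rank-$2$ Picard lattice and the explicit Beauville-Bogomolov-Fujiki form. In the basis $\{H,\delta\}$ one has (from the standard description of the BBF form on a $K3$ Hilbert square) $q_Y(H)=(H_S)^2=2$, $q_Y(\delta)=-2$ and $q_Y(H,\delta)=0$, so that $q_Y(aH+b\delta)=2(a^2-b^2)$. Item $(1)$ then follows immediately: $\mathscr{C}_Y$ is by definition the connected component of $\{q_Y>0\}$ containing a K\"ahler class, so intersecting with $N^1(Y)_{\mathbb{R}}$ and imposing that $H$ (ample) lies inside picks out the open cone $\{a>|b|\}$, whose closure is generated by the two isotropic classes $H+\delta$ and $H-\delta$.

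For item $(2)$, Remarks \ref{movableconeisqnefcone} and \ref{qnefrmk1} identify the cone in question with $\mathrm{Nef}_{q_Y}(Y)$. The key geometric input is that the only $q_Y$-exceptional prime divisor on $Y$ is the exceptional divisor $E=2\delta$ of the Hilbert-Chow morphism $Y\to\mathrm{Sym}^2(S)$, with $q_Y(E)=-8$; this uniqueness comes from Markman's classification of prime exceptional divisors on $K3^{[n]}$-type manifolds, combined with the hypothesis $\mathrm{Pic}(S)=\mathbb{Z}H_S$ and $H_S^2=2$ (which eliminates all other candidate Mukai vectors in $v^\perp$ for $v=(1,0,-1)$). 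Once this is accepted, the $q_Y$-nef condition reduces to $q_Y(\alpha,\delta)\geq 0$, i.e.\ $b\leq 0$; intersecting with $\overline{\mathscr{C}_Y}\cap N^1(Y)_{\mathbb{R}}$ yields exactly $\langle H,H-\delta\rangle$.

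For items $(3)$ and $(4)$, I would invoke the two extremal contractions of $Y$. The class $H$ is the pullback of an ample class on $\mathrm{Sym}^2(S)$ via Hilbert-Chow, hence nef but not ample: one boundary ray of $\mathrm{Nef}(Y)$. The other boundary comes from the Lagrangian $\mathbb{P}^2\subset Y$ obtained as the image of the map $\mathbb{P}^2\hookrightarrow Y$, $x\mapsto \phi^{-1}(x)$, where $\phi\colon S\to\mathbb{P}^2$ is the degree-$2$ morphism associated with $|H_S|$; a Mukai flop along this $\mathbb{P}^2$ produces another IHS manifold $Y'$. The flopping ray is the unique ray in the movable cone $q_Y$-perpendicular to the class of a line in this $\mathbb{P}^2$, and a short lattice computation identifies it as $3H-2\delta$, giving $\mathrm{Nef}(Y)=\langle H, 3H-2\delta\rangle$. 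Passing to $Y'$, the birational isometry $f_*\colon N^1(Y)_{\mathbb{R}}\to N^1(Y')_{\mathbb{R}}$ sends $H\mapsto H'$ and $\delta\mapsto \delta'$, and $\mathrm{Nef}(Y')$ occupies the remaining chamber $\langle 3H'-2\delta',\,H'-\delta'\rangle$ of the common movable cone; the isotropic class $H'-\delta'$ is the one inducing the Beauville-Mukai Lagrangian fibration $Y'\to\mathbb{P}^2$ attached to the genus-$2$ pencil on $S$, confirming its nefness on $Y'$. Uniqueness of $Y'$ follows from the fact that the movable cone $\langle H,H-\delta\rangle$ contains exactly one interior wall (through $3H-2\delta$), decomposing it into the two chambers corresponding to $Y$ and $Y'$. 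The main obstacle throughout is the external geometric input in items $(2)$ and $(3)$: identifying $E=2\delta$ as the unique $q_Y$-exceptional prime divisor and pinning down $3H-2\delta$ as the exact flopping ray cannot be obtained by formal manipulation of $q_Y$ alone, and rests on either Markman/Bayer-Macr\`i-type wall-crossing results or direct enumerative computation on $S$ and $Y$.
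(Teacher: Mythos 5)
The paper does not prove this proposition: it imports it verbatim as Lemma 3.2 of Riess's article \cite{Ulrike}, stated purely to set up the ensuing example of Boucksom--Zariski chambers on $\mathrm{Hilb}^2(S)$. Your outline therefore supplies a proof that the source text deliberately omits, and it follows the route one would expect from the literature: compute the Beauville--Bogomolov--Fujiki form in the basis $\{H,\delta\}$, identify the unique $q_Y$-exceptional prime divisor as the Hilbert--Chow exceptional $E$ with $[E]=2\delta$ (so that $\mathrm{Nef}_{q_Y}(Y)$ is cut out by the single inequality $q_Y(\cdot,\delta)\ge 0$ inside the closure of the positive cone), and locate the interior wall of the movable cone at $3H-2\delta$ via the Mukai flop of the Lagrangian $\mathbb{P}^2$ of fibres of the degree-$2$ cover $S\to\mathbb{P}^2$. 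Two small points to tighten: (a) when you say the flopping ray is ``$q_Y$-perpendicular to the class of a line'', you are tacitly invoking the isomorphism $N_1(Y)_{\mathbb{Q}}\cong N^1(Y)_{\mathbb{Q}}$ furnished by the nondegenerate BBF pairing, since the line class lives in $N_1$; the actual computation of which divisor class $[\ell]$ corresponds to under this identification is the content of the ``short lattice computation'' you wave at, and it should be carried out or cited; (b) $|H_S|\cong\mathbb{P}^2$ is a net, not a pencil, so the Lagrangian fibration on $Y'$ is the relative compactified Jacobian of the two-parameter family of genus-$2$ curves in $|H_S|$, and the isotropic class $H-\delta$ is only semiample after passing to $Y'$. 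You correctly flag that uniqueness of the $q_Y$-exceptional prime divisor and the exact wall location rest on external input (Markman's classification of prime exceptional divisors or Bayer--Macr\`i wall-crossing), which is exactly why the paper chooses to cite Riess rather than reprove the statement.
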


 Let $Y$ be the IHS manifold of Proposition \ref{propulrike}. We only have one $q_Y$-exceptional prime divisor, namely $2\delta$. It follows from Corollary \ref{structurepseff} and item $1$ of Proposition \ref{propulrike} that $\overline{\mathrm{Eff}(Y)}=\langle A-\delta,\delta \rangle$. We have two Boucksom-Zariski chambers. The first is $\Sigma_M$, where $M$ is any element of $\mathrm{int}\left(\mathrm{Mov}(Y)\right)$, the second is the chamber associated (for example) to the big and $q_Y$-nef divisor $A$, namely $\Sigma_A$. It follows from Proposition \ref{propulrike} that $\overline{\Sigma_M}=\mathrm{Nef}_{q_Y}(Y)=\langle A-\delta,A \rangle$, moreover $\overline{\Sigma_A}=\langle A, \delta \rangle$. We observe that the chamber $\Sigma_M$ is neither closed nor open, while $\Sigma_A$ is open.
 
 With the notation of Proposition \ref{propulrike}, we obtain the following picture, which has been drawn starting from Figure 1 in \cite{Ulrike}.

\begin{figure}[htbp]\label{fig:CONES}
  \centering

\definecolor{light-gray}{gray}{0.95}
  \begin{tikzpicture}[scale=0.9]

       \path[fill=light-gray] (0,0) -- (-4,4) -- (-4,5) -- (4,5) -- (4,0) -- cycle;

       \begin{scope}
         \clip (0,0) -- (-3.33,5) -- (0,5) -- cycle;
      \foreach \x in {0,1,...,20} 
      {\draw [gray] (0-0.2*\x,0+0.2*\x) -- (2-0.2*\x,5+0.2*\x);}
       \path[fill=light-gray] (-2.1,4.3) rectangle (-0.5,4.7);
       \end{scope}

       \begin{scope}
         \clip (0,0) -- (-3.33,5) -- (-4,5) -- (-4,4) -- cycle;
      \foreach \x in {0,1,...,30} 
      {\draw [gray] (0-0.2*\x,0+0.2*\x) -- (4-0.2*\x,2+0.2*\x);}
       \path[fill=light-gray] (-4,4.2) rectangle (-3,4.8);
       \end{scope}

     \foreach \x in {-4,-3,...,4} 
       \foreach \y in {-1,...,5}{
         \fill(\x,\y) circle (1pt);}
       \fill(0,0) circle (2 pt);
       \node (o) at (-0.3,-0.3) {$0$};
       \fill(1,0) circle (2 pt);
       \node (d) at (1,-0.3) {$\delta$};
       \fill(0,1) circle (2 pt);
       \node (h) at (0.3,1) {$H$};
       \fill(-1,1) circle (2 pt);
       \node (l) at (-1.3,0.5) {$H-\delta$};

\draw [line width=1.5](0,0) -- (0,5);
\draw [line width=1.5pt] (0,0) -- (-4,4);
\draw [line width=1.5pt]  (0,0) -- (4,0);
\path[fill=light-gray] (-0.2,2.3) rectangle (0.9,2.8);
\draw [dashed] (0,0) -- (-3.33,5);

       \node (pseffX) at (0.3,2.5) {$\overline{\mathrm{Eff}(Y)}$};
       \node (nefX) at (-1.3,4.5) {$\mathrm{Nef}(Y)$};
       \node (nefX') at (-3.9,4.5) {$\mathrm{Nef}(Y')$};
       \node (sigmaH) at (2.3,2.3) {$\Sigma_{H}$};
       \path[fill=light-gray] (-2.2,2.4) rectangle (-1.4,2.7);
       \node (sigmaM) at (-1.8,2.6) {$\Sigma_M$};

  \end{tikzpicture}
  \caption{Boucksom-Zariski chambers on $\mathrm{Hilb}^2(S)$}
  \label{fig:cones}
\end{figure}
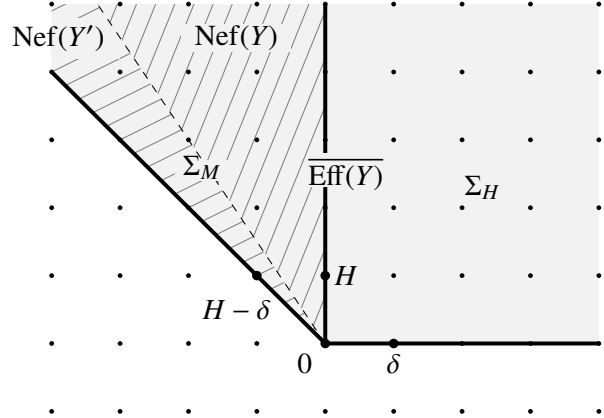
 
 Now we describe the volume function $\mathrm{vol}(-)$ associated with $Y$. We start by observing that in this case, we have $c_Y=3$ (see for example the table on page 78 in \cite{Rap}). By Proposition \ref{upshotprop} and by the proof of item 2 in Theorem \ref{mainthm1}, we have the following description of the volume function.
\begin{enumerate}
\item \textbf{vol(-) on $\mathbf{\Sigma_M}$:} let $[D]\in \Sigma_M$ and write $[D]=x[A]+z[A-\delta]$. Then
\[
\mathrm{vol}(D)=[D]^4=3 \cdot \left[q_Y(x[A]+z[A-\delta])\right]^2=12x^4+48x^2z^2+48x^3z.
\]
\item \textbf{vol(-) on $\mathbf{\Sigma_A}$:} let $[D]\in \Sigma_A$ and write $[D]=x[A]+z[\delta]$. Then
\[
\mathrm{vol}(D)=[P(D)]^4=3 \cdot \left[q_Y(x[A])\right]^2=3x^4(q_Y(A))^2=12x^4.
\]
\end{enumerate}
Outside the big cone $\mathrm{vol}(-)$ is zero and this concludes its description.
\end{ex}
We conclude this section with the following remark about the stability chambers (see Definition \ref{defnstabchamb}) on projective IHS manifolds.
\begin{rmk}[A remark on the stability chambers]\label{rmkstabchambers}
Let $S$ be a smooth projective surface (over any algebraically closed field $\mathbb{K}$) and $Y$ a normal complex projective variety. As we said in the introduction, in \cite{Bau} the authors also studied how "stable base loci" of divisors vary in $N^1(S)_{\mathbb{R}}$, obtaining in this way another decomposition of $\mathrm{Big}(S)$ into chambers, known as \textit{stability chambers}. For further details, we refer the interested reader to \cite{Ein} for the needed positivity notions, and to \cite[Section 2]{Bau} for the part concerning the stability chambers. 
\begin{defn}
The \textit{stable base locus} of an integral divisor $D$ on $Y$ is defined as
\[
\mathbf{B}(D):=\bigcap_{m \geq 1}\mathrm{Bs}(mD)_{\mathrm{red}},
\]
where $\mathrm{Bs}(mD)_{\mathrm{red}}$ is the base locus of the divisor $mD$, considered as a reduced subset of $Y$.
\end{defn}
Bauer, K\"uronya, and Szemberg study a slightly modified version of the stable base locus, namely the augmented base locus. The advantage of working with the latter is that it can be safely studied in the N\'{e}ron-Severi space, namely two numerically equivalent divisors have the same augmented base locus.
\begin{defn}
The augmented base locus of an $\mathbb{R}$-divisor $D$ on $Y$ is defined as
\[
\mathbf{B}_{+}(D):=\bigcap_{D=A+E}\mathrm{Supp}(E),
\]
where the intersection is taken over all the decompositions of $D=A+E$, where $A$ and $E$ are $\mathbb{R}$-divisors such that $A$ is an ample and $E$ is effective.
\end{defn}

\begin{defn}
The restricted base locus of an $\mathbb{R}$-divisor $D$ on $Y$ is defined as
\[
\mathbf{B}_{-}(D):=\bigcup_{A}\mathbf{B}(D+A),
\]
where the union is taken over all ample divisors $A$, such that $D + A$ is a $\mathbb{Q}$-divisor .
\end{defn}

\begin{defn}
Let $D$ be an $\mathbb{R}$-dvisior on $Y$. We say that $D$ is stable if $\mathbf{B}_{-}(D)=\mathbf{B}_{+}(D)$.
\end{defn}
\begin{rmk}
Note that both the augmented base locus and the restricted base locus of a given $\mathbb{R}$-divisor depend only on the numerical equivalence class, so the stability notion does.
\end{rmk}

\begin{defn}\label{defnstabchamb}
Let $D$ be a stable $\mathbb{R}$-divisor on $Y$. The chamber of stability of $D$ is defined as
\[
\mathrm{SC}(D):=\left\{D' \in \mathrm{Big}(Y) \; | \; \mathbf{B}_{+}(D')=\mathbf{B}_{+}(D)\right\}.
\]
\end{defn}

The relevant result obtained in \cite{Bau} is the following.
 \begin{thm}\label{stabilitychambers}
 Let $D$ be a stable big $\mathbb{R}$-divisor on $S$, and $\mathrm{SC}(D)$ its stability chamber. Then
 \[
 \mathrm{int}\left(\mathrm{SC}(D)\right) = \mathrm{int}\left(\Sigma_{P(D)}\right),
 \]
 where $\Sigma_{P(D)}$ is the Zariski chamber associated to the big and nef $\mathbb{R}$-divisor $P(D)$.
 \end{thm}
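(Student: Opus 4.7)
The plan is to reduce the theorem to two characterizations of base loci on a smooth projective surface, both valid for an arbitrary big $\mathbb{R}$-divisor $D = P(D) + N(D)$ on $S$: the restricted base locus satisfies $\mathbf{B}_{-}(D) = \mathrm{Supp}(N(D))$, and the augmented base locus satisfies $\mathbf{B}_{+}(D) = \mathrm{Null}(P(D))$, where $\mathrm{Null}(P(D)) := \bigcup_{C \cdot P(D) = 0} C$ with the union running over irreducible curves. The first identity is standard, following from the continuity of the Zariski decomposition under small ample perturbations together with the classical description of stable base loci of $\mathbb{Q}$-divisors. The second is Nakamaye's theorem applied to the big and nef divisor $P(D)$, coupled with the elementary observation that $\mathbf{B}_{+}(P(D)+N(D)) = \mathbf{B}_{+}(P(D))$: the components of $N(D)$ are $P(D)$-orthogonal by the Zariski decomposition, hence already lie in $\mathrm{Null}(P(D))$, so adding an effective divisor supported in $\mathbf{B}_{+}$ does not enlarge it.

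With these two identities in hand, the stability condition $\mathbf{B}_{+}(D) = \mathbf{B}_{-}(D)$ translates to $\mathrm{Null}(P(D)) = \mathrm{Supp}(N(D))$, i.e.\ no prime curve outside the support of $N(D)$ is $P(D)$-orthogonal. This is exactly the surface analogue of the criterion distinguishing the interior of a Zariski chamber from its boundary, in the spirit of Proposition \ref{propborder} and Corollary \ref{interiorchamber}. Hence a big $\mathbb{R}$-divisor $D$ on $S$ is stable if and only if $D \in \mathrm{int}(\Sigma_{P(D)})$.

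From this equivalence the two inclusions follow formally. For any $D' \in \mathrm{int}(\Sigma_{P(D)})$, the sets $\mathrm{Supp}(N(D'))$ and $\mathrm{Null}(P(D'))$ are both constant equal to $\mathrm{Supp}(N(D)) = \mathrm{Null}(P(D))$, yielding $\mathbf{B}_{+}(D') = \mathbf{B}_{+}(D)$ and therefore $\mathrm{int}(\Sigma_{P(D)}) \subseteq \mathrm{SC}(D)$; openness gives $\mathrm{int}(\Sigma_{P(D)}) \subseteq \mathrm{int}(\mathrm{SC}(D))$. Conversely, since stable divisors are dense in $\mathrm{int}(\mathrm{SC}(D))$ and each is contained in the interior of its own Zariski chamber (which by the characterization of $\mathbf{B}_{+}$ must coincide with $\Sigma_{P(D)}$), a local finiteness argument shows that $\mathrm{int}(\mathrm{SC}(D))$ meets only $\Sigma_{P(D)}$, forcing $\mathrm{int}(\mathrm{SC}(D)) \subseteq \mathrm{int}(\Sigma_{P(D)})$.

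The main obstacle is establishing the identity $\mathbf{B}_{+}(D) = \mathrm{Null}(P(D))$ in the generality required, since Nakamaye's theorem must be invoked in its surface-specific form (available in arbitrary characteristic) and one must verify that enlarging $P(D)$ by the exceptional effective divisor $N(D)$ really leaves $\mathbf{B}_{+}$ unchanged. Once this positivity input is secured, the remainder of the argument reduces to a direct manipulation of the definitions of $\mathrm{SC}(D)$ and of $\Sigma_{P(D)}$, together with the local finiteness of the decomposition into Zariski chambers.
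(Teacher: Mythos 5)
The theorem you are proving is not proved in the paper at all: the author states it as Theorem~\ref{stabilitychambers}, attributes it explicitly to Bauer--K\"uronya--Szemberg~\cite{Bau}, and then uses it only to pose Question~$\blacklozenge$ and give a counterexample to the analogous statement for IHS manifolds. So there is no ``paper's own proof'' to compare against. That said, your strategy is exactly the one used in~\cite{Bau}: reduce the statement to the two identities $\mathbf{B}_{-}(D) = \mathrm{Supp}(N(D))$ and $\mathbf{B}_{+}(D) = \mathrm{Null}(P(D))$ for big $\mathbb{R}$-divisors on a surface, observe that stability of $D$ is therefore equivalent to $\mathrm{Neg}(D) = \mathrm{Null}(P(D))$, i.e.\ to $D \in \mathrm{int}(\Sigma_{P(D)})$, and then translate the definitions of $\mathrm{SC}(D)$ and $\Sigma_{P(D)}$ into each other.

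One place where your write-up is muddier than it needs to be is the converse inclusion $\mathrm{int}(\mathrm{SC}(D)) \subseteq \mathrm{int}(\Sigma_{P(D)})$. The phrase ``$\mathrm{int}(\mathrm{SC}(D))$ meets only $\Sigma_{P(D)}$'' is not accurate as stated: a divisor $D'$ with $\mathbf{B}_{+}(D') = \mathbf{B}_{+}(D)$ but $\mathrm{Neg}(D') \subsetneq \mathrm{Null}(P(D'))$ lies in a \emph{different} Zariski chamber, so $\mathrm{SC}(D)$ does meet other chambers (on the boundary of $\Sigma_{P(D)}$), and the density-of-stable-divisors step is left as an assertion. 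A cleaner route avoids this entirely: by the identity $\mathbf{B}_{+}(D') = \mathrm{Null}(P(D'))$ together with the ever-present inclusion $\mathrm{Neg}(D') \subseteq \mathrm{Null}(P(D'))$, one has $\mathrm{SC}(D) \subseteq \overline{\Sigma_{P(D)}}$ (this is precisely the surface analogue of Proposition~\ref{analyticcharact}), and since $\Sigma_{P(D)}$ is convex with nonempty interior, $\mathrm{int}\left(\overline{\Sigma_{P(D)}}\right) = \mathrm{int}(\Sigma_{P(D)})$; taking interiors finishes. With that replacement your proof is complete and is the standard argument.
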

Theorem \ref{stabilitychambers} tells us that on a smooth projective surface, the Zariski chambers and the stability chambers essentially coincide. It is natural to ask the following question.

\bigskip
\textbf{Question $\blacklozenge$. } Let $D$ be a stable big $\mathbb{R}$-divisor on a projective IHS manifold $X$, and $\mathrm{SC}(D)$ its stability chamber. Is it true that
 \[
 \mathrm{int}\left(\mathrm{SC}(D)\right) = \mathrm{int}\left(\Sigma_{P(D)}\right),
 \]
 where $\Sigma_{P(D)}$ is the Boucksom-Zariski chamber associated to the big and $q_X$-nef $\mathbb{R}$-divisor $P(D)$?
 
\bigskip
 We immediately realized that this is not possible, because the geometry of a higher dimensional projective IHS manifold is more intricate. We now explain why Question $\blacklozenge$ cannot in general be answered positively.
\begin{rmk}
Let $A$ be any ample divisor on $Y$. One can show that \;$\mathrm{SC}(A)=\mathrm{Amp}(Y)$ (see for example Proposition 1.5 and Example 1.7 of \cite{Ein}).
\end{rmk}
Let $A$ be an ample divisor on a projective IHS manifold $X$. If Question $\blacklozenge$ has an affirmative answer, then 
\[
\mathrm{Amp}(X)=\mathrm{SC}(A)=\mathrm{int}(\Sigma_{A}).
\]
But $\mathrm{int}(\Sigma_{A})=\mathrm{int}\left(\mathrm{Mov}(X)\right)$, by Example \ref{exchamber}. If $X$ is the IHS manifold of Proposition \ref{propulrike}, looking at Figure \ref{fig:CONES}, we have $\mathrm{Amp}(X) \subsetneq \mathrm{int}\left(\mathrm{Mov}(X)\right)$, thus Question $\blacklozenge$ cannot be answered positively. This means that, in general, the augmented base loci are not constant in the interior of the Boucksom-Zariski chambers.

\end{rmk}

\section{Simple Weyl chambers}
In this section, we introduce the decomposition of the big cone of a projective IHS manifold into \textit{simple Weyl chambers}. Similarly to what was done for surfaces, we compare this decomposition to that in Boucksom-Zariski chambers. Also, we prove Theorem \ref{mainthm3}. 
 
\begin{defn}
The \textit{simple Weyl chambers} on the projective IHS manifold $X$ are defined as the connected components of the set 
\[
\mathrm{Big}(X) \setminus \left(\bigcup_{D \in \mathrm{Neg}(X)} D^{\perp} \right).
\]
\end{defn}
In this way, we obtain the decomposition of $\mathrm{Big}(X)$ into simple Weyl chambers. The above definition does not give much information about the structure of the simple Weyl chambers, so it is natural to look for a concrete description of them. To begin with, consider a $q_X$-exceptional block $S$ (see Definition \ref{qexcblock}) and define 
\[
W_S:=\left\{\alpha \in \mathrm{Big}(X) \; | \; q_X(\alpha,D) < 0 \mathrm{\; for\; any\; } D \in S \mathrm{ ,\; and\; } q_X(\alpha,D)>0 \mathrm{ \;for \;any \;} D \in \mathrm{Neg}(X) \setminus S \right\}.
\]
We observe that $W_S$ is a convex subset of the N\'{e}ron-Severi, and so is connected.
\begin{lem}\label{wcarenotempty} The following assertions hold true.
\begin{enumerate}
\item The sets $W_S$ are non-empty and open in $N^1(X)_{\mathbb{R}}$.
\item If $S,S'$ are two $q_X$-exceptional blocks, then $W_S \cap W_{S'}=\emptyset$ if and only if $S' \neq S$.
\end{enumerate}
\begin{proof}
\begin{enumerate}
\item If $S=\{\emptyset\}$, then $W_S=\mathrm{int}\left(\mathrm{Mov}(X)\right)$ and we are done. Assume then $S=\{D_1,\dots,D_k\}$. We first prove that $W_S$ is not empty. Indeed, consider the divisor $D=M+\sum_{i=1}^kx_iD_i$ in the variables $x_i$, where $M$ is any divisor lying in $\mathrm{int}\left(\mathrm{Mov}(X)\right)$. We have to find values for the $x_i$ such that the divisor $D$ lies in $W_S$, i.e.  $q_X(D,D_i)<0$ for any $i=1,\dots,k$, $q_X(D,D')>0$ for any $D' \in \mathrm{Neg}(X) \setminus S$, and $D \in \mathrm{Big}(X)$. Let $(a_1,\dots,a_k)$ be any $k$-tuple of negative real numbers, and consider the linear system $\mathcal{S}$ defined by the equations
\[
q_X(M,D_j)+\sum_iq_X(D_i,D_j)x_i=a_j \;,\; j=1,\dots,k.
\]
It follows that
\[
G^{-1} \cdot \left(q_X(M,D_1)-a_1,\dots,q_X(M,D_k)-a_k\right)^{T}=(-x_1,\dots,-x_k)^T,
\]
where $G$ is the Gram matrix of $S$, which is negative-definite and so invertible. Arguing as in Lemma \ref{lemexcepblock} we obtain that the solution $(\lambda_1,\dots,\lambda_k)$ of $\mathcal{S}$ is made of positive numbers. This implies that $D=M+\sum_{i=1}^k\lambda_iD_i$ is big and belongs to $W_S$.

To prove that $W_S$ is open, as $N^1(X)_{\mathbb{R}}$ is finite-dimensional, it suffices to show that, for any $D \in W_S$, $D\pm \epsilon E$ belongs to $W_S$ for $\epsilon$ small enough, where $E$ any $\mathbb{R}$-divisor. As the big cone is open we can assume that $D\pm \epsilon E$ is big. Without loss of generality we can assume $q_X(E,D_i)\geq 0$ for any $i=1,\dots,k$, so that  $q_X(D - \epsilon E,D_i)<0$. Also, choosing $\epsilon$ small enough we have
\[
q_X(D + \epsilon E,D_i)=q_X(D,D_i) + \epsilon q_X(E,D_i)<0, \; \mathrm{for \; any\; i=1,\dots,k},
\]
 thus $q_X(D \pm \epsilon E,D_i)<0$. It remains to check the intersection of $D \pm \epsilon E$ with any $D' \in \mathrm{Neg}(X) \setminus S$. We observe that we can have $q_X(D\pm\epsilon E,D') \leq 0$ only for a finite number of $D' \in \mathrm{Neg}(X) \setminus S$. Indeed, as $D\pm\epsilon E$ is big, we have $D\pm\epsilon E \equiv_{\mathrm{num}} A+N$, where $A$ is ample and $N$ is effective (both depending on the sign between $D$ and $\epsilon E$). Hence 
 \[
 q_X(D\pm \epsilon E,D')=q_X(A+N,D')\leq 0
 \]
 implies that $D'$ is an irreducible component of $N$, and so we can have only a finite number of such $D'$. But then, up to taking $\epsilon$ even smaller, we clearly have
 \[
 q_X(D\pm \epsilon E,D')>0 
 \]
 for any $D' \in \mathrm{Neg}(X) \setminus S$, and we are done.
 \item  The arrow $\Rightarrow$ is obvious. Let us prove $\Leftarrow$. Without loss of generality, we can assume $S' \setminus S \neq \emptyset$. Let $D' \in S' \setminus S$. If $D \in W_S \cap W_{S'}$, then $q_X(D,D')<0$ and $q_X(D,D')>0$, which is clearly absurd.
 \end{enumerate}
\end{proof}
\end{lem}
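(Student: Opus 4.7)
The plan is to treat the two items separately, with item (1) being where all the real content lies. For non-emptiness I would mimic the strategy of Lemma \ref{lemexcepblock}. If $S=\emptyset$ the condition cutting out $W_S$ is $q_X(\alpha,D')>0$ for every $D'\in\mathrm{Neg}(X)$, which (together with bigness) is exactly $\alpha\in\mathrm{int}(\mathrm{Mov}(X))$; such an $\alpha$ exists since $X$ is projective, and $\mathrm{int}(\mathrm{Mov}(X))\subset\mathrm{Big}(X)$ by Example~\ref{exchamber}. If $S=\{D_1,\dots,D_k\}$, I would start from any $M\in\mathrm{int}(\mathrm{Mov}(X))$, which satisfies $q_X(M,D')>0$ for every $D'\in\mathrm{Neg}(X)$, and look for $\alpha=M+\sum_i\lambda_i D_i$ with prescribed strictly negative values $q_X(\alpha,D_j)=a_j<0$. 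This is a linear system in the $\lambda_i$ whose matrix is the Gram matrix $G$ of $S$; since $G$ is negative-definite (hence invertible) and, by Lemma 4.1 of \cite{Bau} invoked in Lemma~\ref{lemexcepblock}, $G^{-1}$ has non-positive rational entries, the unique solution has all $\lambda_i>0$. With such $\lambda_i$, the class $\alpha=M+\sum_i\lambda_i D_i$ is big (sum of a big and an effective divisor), the intersections with $D_j\in S$ are negative by construction, and for any $D'\in\mathrm{Neg}(X)\setminus S$ we have $q_X(\alpha,D')=q_X(M,D')+\sum_i\lambda_i q_X(D_i,D')>0$ because $q_X(M,D')>0$ and $q_X(D_i,D')\ge 0$ by Proposition~\ref{intersproduct}.

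For openness, since $N^1(X)_{\mathbb{R}}$ is finite-dimensional it suffices, given $\alpha\in W_S$ and $E\in N^1(X)_{\mathbb{R}}$, to show that $\alpha\pm\epsilon E\in W_S$ for $0<\epsilon\ll 1$. Bigness of $\alpha\pm\epsilon E$ is automatic because $\mathrm{Big}(X)$ is open, and the strict inequalities $q_X(\alpha,D_j)<0$ for $D_j\in S$ persist under small perturbations by continuity of $q_X$. The delicate point is that the condition $q_X(\alpha\pm\epsilon E,D')>0$ must hold for \emph{all} $D'\in\mathrm{Neg}(X)\setminus S$, an a priori infinite collection. Here I would use the same bigness trick as in Remark~\ref{nulllocusfinite}: since $\alpha\pm\epsilon E$ is big it is numerically equivalent to $A+N$ with $A$ ample and $N$ effective, and any prime divisor $D'$ with $q_X(\alpha\pm\epsilon E,D')\le 0$ must be an irreducible component of $N$, hence lies in a finite set. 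Thus only finitely many of the required inequalities are in danger, and each of them can be preserved by shrinking $\epsilon$.

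Item (2) is then short. The implication $S=S'\Rightarrow W_S\cap W_{S'}\ne\emptyset$ is immediate from item (1), since $W_S=W_{S'}$ is non-empty. Conversely, if $S\ne S'$, then (up to swapping) there is some $D\in S\setminus S'$; for any $\alpha\in W_S$ one has $q_X(\alpha,D)<0$, while for any $\alpha\in W_{S'}$ one has $D\in\mathrm{Neg}(X)\setminus S'$ and therefore $q_X(\alpha,D)>0$. No class can satisfy both, so $W_S\cap W_{S'}=\emptyset$.

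The real obstacle is the openness part of item (1): the defining conditions of $W_S$ involve infinitely many prime divisors, so continuity of $q_X$ alone does not suffice. The resolution is precisely the finiteness of the ``$q_X$-non-positive'' prime divisors attached to a big class, a recurring tool in the paper which I would invoke here as well.
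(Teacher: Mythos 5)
Your proof is correct and follows essentially the same route as the paper: the same construction $M+\sum_i\lambda_i D_i$ with $M\in\mathrm{int}(\mathrm{Mov}(X))$, the same linear system solved via the negative-definite Gram matrix and Lemma 4.1 of \cite{Bau}, the same bigness-finiteness trick for openness, and the same short argument for item (2). The only (welcome) addition is that you spell out the verification $q_X(\alpha,D')>0$ for $D'\in\mathrm{Neg}(X)\setminus S$ via Proposition~\ref{intersproduct}, a step the paper leaves implicit.
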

The following Proposition gives us the desired characterization of the simple Weyl chambers.
\begin{prop}\label{charactwc}
The simple Weyl chambers on $X$ are exactly the sets $\{W_S\}_S$, where $S$ varies among all the $q_X$-exceptional blocks.
\begin{proof}
Let $W$ be a simple Weyl chamber and pick any $D \in W$. We define 
\[
S_D:=\left\{E \in \mathrm{Neg}(X) \; | \; q_X(D,E)<0 \right\},
\]
and observe that $S_D \subset \mathrm{Neg}_{q_X}(D)$, and so $S_D$ is a $q_X$-exceptional block. We now prove that $S:=S_D$ is independent from the element of $W$ we have chosen. Let $D' \neq D$ be another element of $W$. By contradiction, if $S_D\neq S_{D'}$, without loss of generality, we may pick an element $E \in S_D \setminus S_{D'}$, which means $q_X(D,E)<0$ and $q_X(D',E)>0$. Consider the linear functional 
\[
q_X(-,E)\colon N^1(X)_{\mathbb{R}}\to \mathbb{R}, \mathrm{\;such \; that} \; q_X(-,E)(D''):=q_X(D'',E).
\]
As $W$ is connected and $q_X(-,E)$ is continuous, there exists $D'' \in W$ such that $q_X(-,E)(D'')=q_X(D'',E)=0$, hence we reached a contradiction, because $W \subset \mathrm{Big}(X) \setminus \left(\bigcup_{D \in \mathrm{Neg}(X)} D^{\perp}\right)$.  We conclude that $W \subset W_S$. Moreover, we must have $W=W_S$,  because $W$ is a connected component of $\mathrm{Big}(X) \setminus \left(\bigcup_{D \in \mathrm{Neg}(X)} D^{\perp} \right)$ and $W_S \subset \mathrm{Big}(X) \setminus \left(\bigcup_{D \in \mathrm{Neg}(X)} D^{\perp} \right)$ is also connected. To conclude, let $S$ be any $q_X$-exceptional block. Since $W_S$ is connected, then is contained in a simple Weyl chamber $W$. But $W=W_{S'}$ for some $q_X$-exceptional block $S'$, and $W_S$ is not empty by item (1) of Lemma \ref{wcarenotempty}. It follows by item (2) of Lemma \ref{wcarenotempty} that $S=S'$, and so $W_S=W_{S'}$.
\end{proof}
\end{prop}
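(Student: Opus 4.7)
The plan is to establish a bijection between the simple Weyl chambers and the $q_X$-exceptional blocks $S$ via the assignment $S \mapsto W_S$. First I would observe that each $W_S$ is convex (as noted right before Lemma \ref{wcarenotempty}), hence connected, and by its very definition is contained in $\mathrm{Big}(X) \setminus \bigcup_{D \in \mathrm{Neg}(X)} D^{\perp}$, since $q_X(\alpha, D) \neq 0$ for every $\alpha \in W_S$ and every $D \in \mathrm{Neg}(X)$. Consequently $W_S$ is entirely contained in a single simple Weyl chamber, so the map $S \mapsto W_S$ lands in the set of simple Weyl chambers.

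For the reverse inclusion, the key construction is the following. Given a simple Weyl chamber $W$, pick any $\alpha \in W$ and set $S_{\alpha} := \{D \in \mathrm{Neg}(X) \mid q_X(\alpha, D) < 0\}$. I would first show that $S_{\alpha}$ is a $q_X$-exceptional block. Writing the Boucksom-Zariski decomposition $\alpha = P(\alpha) + N(\alpha)$, the $q_X$-nefness of $P(\alpha)$ forces $q_X(N(\alpha), D) < 0$ whenever $q_X(\alpha, D) < 0$, and since $q_X$ is an intersection product (Proposition \ref{intersproduct}), $D$ must then appear as an irreducible component of $N(\alpha)$. Thus $S_{\alpha} \subseteq \mathrm{Neg}_{q_X}(\alpha)$, and any subset of a $q_X$-exceptional block remains a $q_X$-exceptional block (since the Gram matrix of a sub-block is a principal submatrix of a negative-definite matrix).

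Next, I would prove that $S_{\alpha}$ depends only on $W$ and not on the chosen $\alpha$. Suppose $\alpha, \beta \in W$ give $S_{\alpha} \neq S_{\beta}$, so that some $D \in \mathrm{Neg}(X)$ makes $q_X(\alpha, D)$ and $q_X(\beta, D)$ of opposite signs (these values being nowhere zero on $W$). Applying the intermediate value theorem to the continuous linear functional $q_X(-, D)$ along any continuous path joining $\alpha$ and $\beta$ in the open connected set $W$ (hence path-connected) produces an element of $W \cap D^{\perp}$, contradicting the definition of simple Weyl chamber. Hence $S := S_{\alpha}$ is well defined for $W$, and by construction $W \subseteq W_S$. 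Since $W_S$ is itself connected and lies in $\mathrm{Big}(X) \setminus \bigcup_{D \in \mathrm{Neg}(X)} D^{\perp}$, while $W$ is a connected component of that set, the equality $W = W_S$ follows.

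For surjectivity, item (1) of Lemma \ref{wcarenotempty} gives each $W_S$ non-empty, so $W_S$ lies in some simple Weyl chamber $W'$, which by the previous step equals $W_{S'}$ for some $q_X$-exceptional block $S'$; the disjointness in item (2) of Lemma \ref{wcarenotempty} then forces $S = S'$, whence $W_S = W'$ is indeed a simple Weyl chamber. The main subtle point is the constancy of $S_{\alpha}$ along $W$; the crucial inputs are the avoidance of every $D^{\perp}$ with $D \in \mathrm{Neg}(X)$ along $W$ and the identification of $S_{\alpha}$ as a sub-block of $\mathrm{Neg}_{q_X}(\alpha)$ through the Boucksom-Zariski decomposition.
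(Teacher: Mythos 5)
Your proof is correct and takes essentially the same route as the paper's: assigning to each simple Weyl chamber $W$ the block $S_\alpha=\{D \in \mathrm{Neg}(X) \mid q_X(\alpha,D)<0\}$ for $\alpha \in W$, identifying $S_\alpha$ as a sub-block of $\mathrm{Neg}_{q_X}(\alpha)$ via the Boucksom-Zariski decomposition, using connectedness of $W$ and avoidance of the hyperplanes $D^\perp$ to show $S_\alpha$ is well defined, and invoking Lemma~\ref{wcarenotempty} for non-emptiness and disjointness of the $W_S$ to close the bijection. You spell out a couple of steps the paper leaves implicit (the principal-submatrix argument and the detailed reasoning that $S_\alpha \subseteq \mathrm{Neg}_{q_X}(\alpha)$), but the structure of the argument is the same.
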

\begin{cor}\label{otowc}
There is a one-to-one correspondence between the $q_X$-exceptional blocks and the simple Weyl chambers on $X$.
\begin{proof}
This is clear from Proposition \ref{charactwc}.
\end{proof}
\end{cor}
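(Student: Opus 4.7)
The plan is to exhibit the correspondence explicitly through the assignment $S \mapsto W_S$ sending a $q_X$-exceptional block to the convex subcone defined just before Lemma \ref{wcarenotempty}, and to verify that this assignment is a well-defined bijection onto the set of simple Weyl chambers. All the substantive work has already been carried out in Lemma \ref{wcarenotempty} and Proposition \ref{charactwc}; the proof is essentially a bookkeeping argument.

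For surjectivity, I would simply appeal to Proposition \ref{charactwc}: it asserts that every simple Weyl chamber equals $W_S$ for some $q_X$-exceptional block $S$, so the assignment $S \mapsto W_S$ hits every simple Weyl chamber. For injectivity, suppose $S \neq S'$ are two distinct $q_X$-exceptional blocks. By item (1) of Lemma \ref{wcarenotempty}, each of $W_S$ and $W_{S'}$ is non-empty; by item (2) of the same Lemma, they are disjoint. Hence $W_S \neq W_{S'}$, which proves injectivity.

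The only mild subtlety worth mentioning is to check that $S \mapsto W_S$ is well-defined as a map \emph{into} the collection of simple Weyl chambers, i.e.\ that $W_S$ actually is a simple Weyl chamber rather than a strictly smaller connected subset of one. This is also established in the proof of Proposition \ref{charactwc}: $W_S$ is connected (being convex by its definition) and contained in $\mathrm{Big}(X)\setminus\bigcup_{D\in\mathrm{Neg}(X)}D^\perp$, and it agrees with the simple Weyl chamber attached to any of its points via the construction $D \mapsto S_D$ given there. I do not anticipate any genuine obstacle in writing this up; the main point is just to make visible that Lemma \ref{wcarenotempty} and Proposition \ref{charactwc} together say exactly that $S \mapsto W_S$ is a bijection.
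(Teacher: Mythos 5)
Your proposal is correct and follows exactly the route the paper intends: the paper's proof of the corollary is just \enquote{clear from Proposition~\ref{charactwc}}, and your write-up makes explicit that Proposition~\ref{charactwc} gives surjectivity and well-definedness of $S \mapsto W_S$, while Lemma~\ref{wcarenotempty} (non-emptiness plus disjointness) gives injectivity. No gaps.
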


\begin{ex}\label{exwc}
The easiest example of a simple Weyl chamber is that associated with $\emptyset$, i.e.
\[
W_{\emptyset}=\mathrm{int}\left(\mathrm{Mov}(X)\right).
\] 
Indeed 
\[
\mathrm{int}\left(\mathrm{Mov}(X)\right)=\{\alpha \in N^1(X)_{\mathbb{R}}\cap \mathscr{C}_X \; | \;  q_X(\alpha,E)>0 \mathrm{\; for \; any \;} E \in\mathrm{Neg}(X)\}.
\]
\end{ex}
We now compare the decomposition of $\mathrm{Big}(X)$ into Boucksom-Zariski chambers with the one into simple Weyl chambers. Let us start with the following definition.
\begin{defn}
We say that the Boucksom-Zariski chambers on $X$ are \textit{numerically determined} if their interior coincides with the simple Weyl chambers.
\end{defn}

 Since by Corollary \ref{otobzc} there is a one-to-one correspondence between the Boucksom-Zariski chambers and the $q_X$-exceptional blocks, from now on we will indicate the Boucksom-Zariski chamber associated to a $q_X$-exceptional block $S$ with $BZ_S$. We start with the following relevant result, which tells us that even though the two decompositions can differ, already in dimension $2$ (see \cite[Examples]{Bauer1}), the number of Boucksom-Zariski chambers is always the same to that of simple Weyl chambers (this happens also in the case of surfaces, see Theorem 2.5 in \cite{Hanumanthu}).

\begin{prop}\label{cardwcbcsame}
There is a one-to-one correspondence between the simple Weyl chambers and the Boucksom-Zariski chambers on $X$.
\begin{proof}
This is obvious from  Corollary \ref{otowc} and Corollary \ref{otobzc}.
\end{proof}
\end{prop}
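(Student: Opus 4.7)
My plan is to obtain the bijection as the composition of two bijections that have already been established in the preceding development, rather than constructing a direct geometric map between the two families of chambers. The key observation is that both decompositions have been parametrized by the same combinatorial data, namely the set of $q_X$-exceptional blocks of $X$ in the sense of Definition \ref{qexcblock}.

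First, I would invoke Corollary \ref{otobzc}, which supplies a bijection $\Phi_{BZ}$ between the set of $q_X$-exceptional blocks and the set of Boucksom-Zariski chambers: to each block $S$ is associated the chamber $BZ_S = \Sigma_P$, where $P$ is a big and $q_X$-nef divisor with $\mathrm{Null}_{q_X}(P)=S$ (the existence of such a $P$ being guaranteed by Lemma \ref{lemexcepblock}, and the independence of the chamber from the choice of $P$ being encoded in Lemma \ref{chambersequaliff}). Next, I would invoke Corollary \ref{otowc}, which supplies a second bijection $\Phi_W$ between the set of $q_X$-exceptional blocks and the simple Weyl chambers: to each block $S$ is associated the set $W_S$, with the characterization of simple Weyl chambers as precisely the sets $W_S$ coming from Proposition \ref{charactwc}.

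The composition $\Phi_W \circ \Phi_{BZ}^{-1}$ then yields the desired bijection between Boucksom-Zariski chambers and simple Weyl chambers, sending $BZ_S \mapsto W_S$. Since this is just a composition of two established bijections, there is no real obstacle; the entire content of the proposition is already packaged in the earlier corollaries, and the proof amounts to citing them. The only conceptual subtlety worth noting is that this bijection is purely combinatorial and does not assert any containment or geometric identification of $BZ_S$ with $W_S$ — that comparison is precisely what is taken up in Theorem \ref{mainthm3}, where the equality of interiors is shown to hold only under the additional numerical hypothesis on $q_X$-exceptional prime divisors.
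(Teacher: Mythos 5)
Your proof is correct and follows exactly the same route as the paper, which simply cites Corollary \ref{otobzc} and Corollary \ref{otowc} to obtain the bijection by composing the two parametrizations by $q_X$-exceptional blocks. You have merely spelled out the composition $\Phi_W \circ \Phi_{BZ}^{-1}$ explicitly, which the paper leaves implicit.
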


The next result, which is a generalization of Theorem 3.4 in \cite{Hanumanthu}, is a criterion for deciding whether a simple Weyl chamber $W_S$ is contained in the Boucksom-Zariski chamber $BZ_S$.

\begin{prop}\label{wcinbzc}
Let $S$ be a $q_X$-exceptional block. Then $W_S \subseteq BZ_S$ is and only if the following condition holds: if $D \in \mathrm{Neg}(X) \setminus S$ and $S \cup \{D\}$ is a $q_X$-exceptional block, then $q_X(D,D')=0$ for any $D' \in S$.
\begin{proof}
Note that we always have 
\[
W_{\emptyset}=\mathrm{int}\left(\mathrm{Mov}(X)\right)\subseteq \mathrm{Big}(X) \cap \mathrm{Nef}_{q_X}(X)= BZ_{\emptyset},
\] 
so we can assume $S \neq \emptyset$. We first prove $\Leftarrow$. If we show that $\mathrm{Neg}_{q_X}(D)=S$ for any $D \in W_S$, by definition of Boucksom-Zariski chamber we are done. Let $S=\{D_1,\dots,D_k\}$ and $D \in W_S$. As usual, let $D=P(D)+N(D)$ be the divisorial Zariski decomposition of $D$. As by hypothesis $q_X(D,D_i)<0$, for any $i=1,\dots,k$, and $P(D)$ is $q_X$-nef, it follows that any $D_i$ is an irreducible component of $N(D)$. This proves $S \subseteq \mathrm{Neg}_{q_X}(D)$. Now we have to prove $\mathrm{Neg}_{q_X}(D) \subseteq S$. By contradiction, suppose 
\[
\mathrm{Neg}_{q_X}(D) \setminus S=\{D'_1,\dots,D'_r\}.
\]
We thus can write $N(D)=\sum_{i=1}^ra'_iD'_i+\sum_{j=1}^ka_jD_j$. By hypothesis, for any $i=1,\dots,r$ and $j=1,\dots,k$, we have $q_X(D'_i,D_j)=0$. Since $D \in W_S$, by definition, we obtain 
\begin{equation}\label{eqnwc}
q_X(D,D'_j)=q_X(N(D),D'_j)=\sum_{i=1}^ra'_iq_X(D'_i,D'_j)=:b_j>0 \;,\; \mathrm{for \; any \; j=1,\dots,r}.
\end{equation}
If we denote by $G$ the intersection matrix of $\{D'_1,\dots,D'_r\}$, the conditions contained in (\ref{eqnwc}) can be resumed by the linear system
\[
(a'_1,\dots,a'_r)G=(b_1,\dots,b_r),
\]
which in turn gives
\[
(a'_1,\dots,a'_r)=(b_1,\dots,b_r)G^{-1},
\]
because $G$ is invertible, as it is negative-definite. By Lemma 4.1 of \cite{Bau} the entries of $G^{-1}$ are non-positive. It follows that $a'_i<0$ for any $i=1,\dots,r$, and this contradicts the effectivity of $N(D)$. Then $\mathrm{Neg}_{q_X}\subseteq S$, and so $\mathrm{Neg}_{q_X} = S$.

Now we prove the arrow $\Rightarrow$. By contradiction, assume that $W_S \subseteq BZ_S$, but $S=\{D_1,\dots,D_k\}$ does not satisfy the condition in the statement, so that there exists $E \in \mathrm{Neg}(X)\setminus S$ such that $S\cup \{E\}$ is a $q_X$-exceptional block, and $q_X(E,D_i)>0$ for some $D_i \in S$.
We will show $W_S \not \subseteq BZ_S$ by constructing a divisor $D$ belonging to $W_S\setminus BZ_S$.
 By  Corollary \ref{otobzc} we can consider the Boucksom-Zariski chamber $BZ_{S\cup \{E\}}$ and we can pick an element $D' \in \mathrm{int}\left(BZ_{S \cup \{E\}}\right)\neq \emptyset$. Note that, by Corollary \ref{interiorchamber}, $D'$ is such that $\mathrm{Neg}_{q_X}(D')=\mathrm{Null}_{q_X}(P(D'))=S\cup \{E\}$. Now, we will construct a big divisor  $D''$ satisfying the following conditions: 
\begin{itemize}
\item $\mathrm{Supp}\left(N(D'')\right)=\left(\cup_{i=1}^kD_i\right) \cup E$.
\item $q_X(D'',E)=q_X(N(D''),E)<0$ and $q_X(D'',D_i)=q_X(N(D''),D_i)<0$, for all $i=1,\dots,k$.
\end{itemize}
Consider the divisor $xE+\sum_{i=1}^kx_iD_i$ in the variables $x,x_1,\dots,x_k$. Let $(b,b_1,\dots,b_r)$ be any $(r+1)$-tuple of negative numbers and consider the linear system $\mathcal{S}$ defined by the equations below.
\begin{align*}
q_X(E,E)x+\sum_{i=1}^kq_X(D_i,E)x_i&=b, \\
q_X(E,D_j)x+\sum_{i=1}^kq_X(D_i,D_j)x_i&=b_i \;,i=1,\dots,k.
\end{align*}
As $S \cup \{E\}$ is a $q_X$-exceptional block, its Gram matrix $G$ is negative-definite, and the entries of $G^{-1}$ are non-positive by Lemma 4.1 of \cite{Bau}. It follows that the unique solution $(c,c_1,\dots,c_k)$ of $\mathcal{S}$ is made of positive numbers. Then the desired divisor $D''$ can be defined as $D'':=P(D')+cE+\sum_{i=1}^kc_iD_i$. Clearly $P(D'')=P(D')$, and $N(D'')$ is the remaining part in the definition of $D''$.
Now, consider the divisor 
\begin{equation}\label{eqnwc9}
D:=P(D'')+\frac{\mathrm{min}\{c,c_1,\dots,c_k\}}{2|q_X(E)|}E+\sum_{i=1}^kc_iD_i=P(D'')+N(D).
\end{equation}
Note that $D$ is big, and its divisorial Zariski decomposition is given by (\ref{eqnwc9}). Clearly $D \notin BZ_S$, because $\mathrm{Neg}_{q_X}(D) \supsetneq S$. To conclude, we show that $D \in W_S$ by computing explicitly $q_X(D,E')$, where $E'$ is any element of $\mathrm{Neg}(X)$.
Let $D_i \in S$ be any element of $S$. Then 
\[
q_X(D,D_i)=\frac{\mathrm{min}\{c,c_1,\dots,c_k\}}{2|q_X(E)|}q_X(E,D_i)+\sum_{j=1}^kc_iq_X(D_j,D_i) \leq q_X(N(D''),D_i)<0.
\]
Intersecting $D$ with $E$ we obtain
\begin{align}\label{eqnwc1}
\begin{split}
q_X(D,E)=\frac{\mathrm{min}\{c,c_1,\dots,c_k\}}{2|q_X(E)|}q_X(E,E)+&\sum_{j=1}^kc_iq_X(D_j,D_i)=\cdots \\
&\cdots=-\frac{1}{2}\mathrm{min}\{c,c_1,\dots,c_k\}+\sum_{j=1}^kc_iq_X(D_j,E)>0,
\end{split}
\end{align}
and the last inequality in (\ref{eqnwc1}) is true because, by assumption, there exists at least one $j\in\{1,\dots,k\}$ satisfying $q_X(D_j,E)>0$.
Intersecting $D$ with any $E'\in \mathrm{Neg}(X) \setminus\left(S \cup \{E\}\right)$ we obtain
\begin{align}\label{eqnwc2}
\begin{split}
q_X(D,E')=q_X(P(D''),E')+\frac{\mathrm{min}\{c,c_1,\dots,c_k\}}{2|q_X(E)|}& q_X(E,E')+ \cdots \\ \cdots +\sum_{j=1}^kc_iq_X(D_j,E') &\geq q_X(P(D''),E')>0.
\end{split}
\end{align}
The first inequality in (\ref{eqnwc2}) is true because $q_X$ is an intersection product, while the second  because $P(D'')=P(D')$ is $q_X$-nef, and $\mathrm{Null}_{q_X}(P(D'))=S \cup \{E\}$. With the last computation, we reached a contradiction, thus we are done.
\end{proof}
\end{prop}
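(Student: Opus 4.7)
The plan is to prove both directions by exploiting the sign property captured in Lemma 4.1 of \cite{Bau}: the inverse of the intersection matrix of a $q_X$-exceptional block has non-positive entries (with negative diagonal, since it is itself negative-definite). Both directions reduce to solving a linear system encoding prescribed intersection numbers against the elements of some $q_X$-exceptional block.

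For the easier direction $(\Leftarrow)$, I would take $D \in W_S$, write its Boucksom-Zariski decomposition $D = P(D) + N(D)$, and show $\mathrm{Neg}_{q_X}(D) = S$. The inclusion $S \subseteq \mathrm{Neg}_{q_X}(D)$ is immediate: for $D_i \in S$ one has $q_X(N(D),D_i) = q_X(D,D_i) - q_X(P(D),D_i) < 0$, since $P(D)$ is $q_X$-nef and $q_X(D,D_i)<0$; then Proposition \ref{intersproduct} forces $D_i$ into the support of $N(D)$. For the reverse inclusion, I would argue by contradiction, supposing $\mathrm{Neg}_{q_X}(D)\setminus S = \{D'_1,\dots,D'_r\}\neq\emptyset$. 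Any subset of a $q_X$-exceptional block is a $q_X$-exceptional block (a principal submatrix of a negative-definite matrix is negative-definite), so each $S\cup\{D'_i\}$ is one, and the hypothesis gives $q_X(D'_i,D_j)=0$ for all $i,j$. Writing $N(D)=\sum a_jD_j+\sum a'_iD'_i$ and $b_j := q_X(D,D'_j) > 0$ (because $D'_j\notin S$ and $D\in W_S$), the relations $q_X(D,D'_j)=\sum_i a'_i q_X(D'_i,D'_j)$ collapse into the linear system $(a'_1,\dots,a'_r)G=(b_1,\dots,b_r)$ with $G$ the negative-definite Gram matrix of $\{D'_i\}$; inverting and invoking Lemma 4.1 of \cite{Bau} gives $a'_i<0$, contradicting the effectivity of $N(D)$.

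For the harder direction $(\Rightarrow)$, I would argue by contraposition: assume there is $E\in\mathrm{Neg}(X)\setminus S$ such that $S\cup\{E\}$ is a $q_X$-exceptional block while $q_X(E,D_{i_0})>0$ for some $D_{i_0}\in S$, and construct an element of $W_S\setminus BZ_S$. First, I would pick $D'\in\mathrm{int}(BZ_{S\cup\{E\}})$, which is non-empty by Corollary \ref{otobzc}, so that $P:=P(D')$ satisfies $\mathrm{Null}_{q_X}(P)=S\cup\{E\}$ by Corollary \ref{interiorchamber}. Using the negative-definiteness of the Gram matrix of $S\cup\{E\}$ together with the sign property of its inverse (Lemma 4.1 of \cite{Bau}), I would then solve a linear system that produces, for any prescribed tuple of negative intersection numbers, a big divisor $D''=P+xE+\sum_i x_iD_i$ with all coefficients $x, x_i$ positive and whose Boucksom-Zariski decomposition is exactly the displayed one (by Remark \ref{rmkbackzariskidecomp}), and such that $q_X(D'',E)<0$ and $q_X(D'',D_i)<0$ for every $i$.

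The decisive step is a perturbation: replacing $x$ by the small quantity $\min\{x,x_1,\dots,x_k\}/(2|q_X(E)|)$ while leaving the $x_i$ unchanged yields a big divisor $D$ that still contains $E$ in its support, hence lies outside $BZ_S$. Verifying $D\in W_S$ is the main obstacle and is where the assumption $q_X(E,D_{i_0})>0$ is crucial: the intersections $q_X(D,D_i)$ stay negative because the $E$-contribution, which was negative in $D''$, has only shrunk in absolute value; the intersection $q_X(D,E)$ flips to positive because the tiny $xq_X(E,E)$ term is dominated by the strictly positive $\sum_i x_i q_X(D_i,E)$ (strict positivity is guaranteed by the choice of $D_{i_0}$); and for any further $E'\in\mathrm{Neg}(X)\setminus(S\cup\{E\})$, the inequality $q_X(D,E')>0$ follows because $q_X(P,E')>0$ (as $E'\notin\mathrm{Null}_{q_X}(P)$), while the remaining terms $x q_X(E,E')$ and $x_i q_X(D_i,E')$ are non-negative since $q_X$ is an intersection product on distinct primes. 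This simultaneous sign-balancing is the heart of the argument.
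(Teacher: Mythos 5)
Your proposal follows the paper's proof in both directions, with the same linear-system argument via Lemma 4.1 of \cite{Bau} in $(\Leftarrow)$, and the same construction in $(\Rightarrow)$: take $D' \in \mathrm{int}\left(BZ_{S\cup\{E\}}\right)$, solve a linear system to produce $D''$ with all prescribed negative intersections against $S\cup\{E\}$, and then shrink the $E$-coefficient to $\min\{\cdot\}/(2|q_X(E)|)$ to obtain $D \in W_S\setminus BZ_S$. The overall structure matches the paper's.

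There is, however, a slip in your justification that $q_X(D,D_i)<0$ for $D_i \in S$. You assert this holds ``because the $E$-contribution, which was negative in $D''$, has only shrunk in absolute value.'' This is wrong on two counts. Since $E$ and $D_i$ are distinct prime divisors, Proposition \ref{intersproduct} gives $q_X(E,D_i)\geq 0$, so the $E$-contribution $x\,q_X(E,D_i)$ to $q_X(D'',D_i)$ is non-negative, not negative. Moreover, if that term had actually been negative, shrinking its absolute value would move $q_X(D,D_i)$ \emph{toward} zero --- the wrong direction --- so the reasoning as stated would not yield the required sign. The correct observation is the opposite one: the non-negative term $q_X(E,D_i)$ is multiplied by a smaller coefficient, hence $q_X(D,D_i)\leq q_X(D'',D_i)<0$, which is exactly the paper's estimate. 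It is a one-line fix and does not affect the construction, but as written this step does not follow.
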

 Now we do the opposite, i.e. we exhibit a criterion explaining when the interior of a Boucksom-Zariski chamber $BZ_S$ is contained in the simple Weyl chamber $W_S$.
 \begin{prop}\label{bzcinwc}
 Let $S$ be a $q_X$-exceptional block. Then $\mathrm{int}\left(BZ_S\right)\subset W_S$ if and only if the $q_X$-exceptional prime divisors belonging to $S$ are pairwise $q_X$-orthogonal. 
 \begin{proof}
 We first prove $\Leftarrow$. Set $S=\{D_1,\dots,D_k\}$. Let $D \in \mathrm{int}\left(BZ_S\right)$, with divisorial Zariski decomposition $D=P(D)+N(D)$, where $N(D)=\sum_{i=1}^ka_iD_i$. By hypothesis we have $q_X(D_i,D_j)=0$ for any $i\neq j$, thus 
 \[
 q_X(D,D_i)=q_X(N(D),D_i)=a_iq_X(D_i)<0,
 \] 
 for any $D_i \in S$. Now, pick a divisor $E \in \mathrm{Neg}(X)\setminus S$. Then $q_X(D,E)\geq 0$, because $q_X$ is an intersection product and $P(D)$ is $q_X$-nef. If $q_X(D,E)=0$, then $q_X(P(D),E)=0$, and so $S=\mathrm{Neg}_{q_X}(D)\neq \mathrm{Null}_{q_X}(P(D))$. But this is absurd, because $D\in \mathrm{int}\left(BZ_S\right)$ if and only if $\mathrm{Neg}_{q_X}(D)= \mathrm{Null}_{q_X}(P(D))$, by Corollary \ref{interiorchamber}. Hence $q_X(D,E)>0$, and $\mathrm{int}\left(BZ_S\right)\subset W_S$. We now show $\Rightarrow$. By contradiction, assume that $\mathrm{int}\left(BZ_S\right)\subset W_S$, and there exist two elements of $S$ which are not $q_X$-orthogonal, say $D_1,D_2$. Let $M$ be any divisor lying in $\mathrm{int}\left(\mathrm{Mov}(X)\right)$. By Lemma \ref{lemexcepblock} there exists a $k$-tuple of positive numbers $(\lambda_1,\dots,\lambda_k)$ such that $M+\sum_{i=1}^k\lambda_iD_i$ is $q_X$-nef with $\mathrm{Null}_{q_X}(M+\sum_{i=1}^k\lambda_iD_i)=S$. Now, choose a $k$-tuple of positive numbers $(\mu_1,\dots,\mu_k)$ such that $\mu_i > \lambda_i$ for any $i=1,\dots,k$. Finally, pick a real number $\eta$ satisfying $0<\eta <\frac{\mu_2-\lambda_2}{|q_X(D_1)|}$ and consider the divisor
 \[
 D=M+(\lambda_1+\eta)D_1+\sum_{i=2}^k\mu_iD_i.
 \]
 It follows by construction that the divisorial Zariski decomposition of $D$ is given by $P(D)=M+\sum_{i=1}^k\lambda_iD_i$, and $N(D)=\eta D_1+\sum_{i=2}^k(\mu_i-\lambda_i)D_i$. By construction $\mathrm{Neg}_{q_X}(D)=\mathrm{Null}_{q_X}(P(D))$, thus $D \in \mathrm{int}(BZ_S)$, by Corollary \ref{interiorchamber}. But
 \begin{equation}\label{eqnbzc3}
 q_X(D,D_1)=\eta q_X(D_1)+\sum_{i=2}^k(\mu_i-\lambda_i)q_X(D_i,D_1)\geq \eta q_X(D_1)+(\mu_2-\lambda_2)>0,
 \end{equation}
 and the last inequality in (\ref{eqnbzc3}) is true by the choice of $\eta$. Thus $D \notin W_S$, which is a contradiction. 
 \end{proof}
 \end{prop}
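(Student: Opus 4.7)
The plan is to prove the biconditional directly, handling each direction by combining the characterization of $\mathrm{int}(BZ_S)$ from Corollary \ref{interiorchamber} with the explicit construction of $q_X$-nef divisors with prescribed $q_X$-null locus from Lemma \ref{lemexcepblock}, and with Remark \ref{rmkbackzariskidecomp} to read off Boucksom–Zariski decompositions of test divisors. The case $S = \emptyset$ is trivial since $\mathrm{int}(BZ_\emptyset) = \mathrm{int}(\mathrm{Mov}(X)) \subseteq W_\emptyset$ by Example \ref{exwc}, so throughout I will assume $S = \{D_1,\dots,D_k\}$ with $k \geq 1$.

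For the direction $(\Leftarrow)$, I would take $D \in \mathrm{int}(BZ_S)$ with Boucksom–Zariski decomposition $D = P(D) + N(D)$. By Corollary \ref{interiorchamber}, $\mathrm{Neg}_{q_X}(D) = S = \mathrm{Null}_{q_X}(P(D))$, so $N(D) = \sum_{i=1}^{k} a_i D_i$ with all $a_i > 0$. Pairwise $q_X$-orthogonality of the $D_i$ then gives
\[
q_X(D, D_i) \;=\; q_X(P(D), D_i) + q_X(N(D), D_i) \;=\; a_i\, q_X(D_i) \;<\; 0
\]
for every $D_i \in S$. For any $E \in \mathrm{Neg}(X) \setminus S$, Proposition \ref{intersproduct} yields $q_X(N(D), E) \geq 0$ and $q_X(P(D), E) \geq 0$, so $q_X(D, E) \geq 0$; if equality held, then $q_X(P(D), E) = 0$ would force $E \in \mathrm{Null}_{q_X}(P(D)) = S$, a contradiction. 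Hence $D \in W_S$.

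For the direction $(\Rightarrow)$, I would argue by contrapositive: assuming there exist, say, $D_1, D_2 \in S$ with $q_X(D_1, D_2) > 0$, I will construct an explicit element of $\mathrm{int}(BZ_S) \setminus W_S$. Apply Lemma \ref{lemexcepblock} to obtain $P = M + \sum_{i=1}^{k} \lambda_i D_i$ that is $q_X$-nef with $\mathrm{Null}_{q_X}(P) = S$, where $M \in \mathrm{int}(\mathrm{Mov}(X))$ and the $\lambda_i$ are positive. Fix positive reals $\mu_2,\dots,\mu_k$ and a small $\eta > 0$, and set
\[
D \;:=\; P \,+\, \eta\, D_1 \,+\, \sum_{i=2}^{k} \mu_i\, D_i.
\]
By Remark \ref{rmkbackzariskidecomp}, $P(D) = P$ and $N(D) = \eta D_1 + \sum_{i \geq 2} \mu_i D_i$, so $\mathrm{Neg}_{q_X}(D) = S = \mathrm{Null}_{q_X}(P(D))$ and Corollary \ref{interiorchamber} places $D$ in $\mathrm{int}(BZ_S)$. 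Computing
\[
q_X(D, D_1) \;=\; \eta\, q_X(D_1) \,+\, \sum_{i=2}^{k} \mu_i\, q_X(D_i, D_1),
\]
the sum on the right contains the strictly positive contribution $\mu_2\, q_X(D_2, D_1)$ while every other term is nonnegative by Proposition \ref{intersproduct}. Choosing $\eta < \mu_2\, q_X(D_2, D_1)/|q_X(D_1)|$ makes $q_X(D, D_1) > 0$, certifying $D \notin W_S$.

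The main delicate point is the $(\Rightarrow)$ construction: the coefficient $\eta$ must be strictly positive so that $D_1$ remains an irreducible component of $N(D)$ (keeping $D$ inside $BZ_S$ rather than in a smaller chamber), yet small enough that the positive intersection $\mu_2\, q_X(D_2, D_1)$ dominates the negative term $\eta\, q_X(D_1)$. This two-parameter balance, together with Remark \ref{rmkbackzariskidecomp} guaranteeing that the written decomposition really is the Boucksom–Zariski one, is what makes the argument go through.
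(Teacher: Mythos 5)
Your proposal is correct and follows essentially the same route as the paper: the $(\Leftarrow)$ direction via Corollary \ref{interiorchamber} plus the intersection-product property, and the $(\Rightarrow)$ direction via the same perturbed divisor $P+\eta D_1+\sum_{i\geq 2}\mu_i D_i$ built from Lemma \ref{lemexcepblock} with $\eta$ small. The only (cosmetic) difference is your bound $\eta<\mu_2\,q_X(D_2,D_1)/|q_X(D_1)|$, which makes the dominance of the positive cross term explicit, whereas the paper's bound $\eta<(\mu_2-\lambda_2)/|q_X(D_1)|$ implicitly uses $q_X(D_2,D_1)\geq 1$.
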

Before proving Theorem \ref{mainthm3}, we need the following chain of equivalences. 
 \begin{lem}\label{bzcnumericallydetermined0}
 The following assertions are equivalent:
 \begin{enumerate}
 \item If two $q_X$-exceptional prime divisors $D_1,D_2$ intersect properly, i.e. $q_X(D_1,D_2)>0$, then $q_X(D_1,D_2)\geq \sqrt{q_X(D_1)q_X(D_2)}$.
 \item If $D_1,D_2$ are $q_X$-exceptional prime divisors such that $\{D_1,D_2\}$ is a $q_X$-exceptional block, then $q_X(D_1,D_2)=0$.
 \item Let $S$ be a $q_X$-exceptional block. If $E \in \mathrm{Neg}(X)\setminus S$, and $S\cup \{E\}$ is a $q_X$-exceptional block, then $q_X(E,E')=0$ for any $E' \in S$.
 \item Let $S$ be a $q_X$-exceptional block. Then the $q_X$-exceptional prime divisors of $S$ are pairwise $q_X$-orthogonal.
 \end{enumerate}
 \begin{proof}
 We first prove $(1)\Rightarrow (2)$. Suppose that $(2)$ does not hold. Then there exist $q_X$-exceptional prime divisors $D_1,D_2$ such that the block $S=\{D_1,D_2\}$ is $q_X$-exceptional, and $q_X(D_1,D_2)\neq 0$. Then $q_X(D_1,D_2)> 0$, because $q_X$ is an intersection product. As the Gram matrix of $S$ is negative-definite, then
 \begin{equation}\label{eqnwc4}
 x^2q_X(D_1)+y^2q_X(D_2)+2xyq_X(D_1,D_2)<0,
 \end{equation}
 for any couple $(x,y)\neq(0,0)$. As $\sqrt{q_X(D_1)q_X(D_2)} \neq 0$, using the relation (\ref{eqnwc4}), we obtain
 \begin{equation}\label{eqnwc5}
 q_X(D_1,D_2)<\frac{\left[x\sqrt{-q_X(D_1)}\right]^2+\left[y\sqrt{-q_X(D_2)}\right]^2}{2xy\sqrt{q_X(D_1)q_X(D_2)}}\cdot \sqrt{q_X(D_1)q_X(D_2)}.
 \end{equation}
 If we choose $x=\sqrt{-q_X(D_1)}$ and $y=\sqrt{-q_X(D_2)}$, then (\ref{eqnwc5}) gives 
 \[
 q_X(D_1,D_2)<\sqrt{q_X(D_1)q_X(D_2)},
 \] 
 which contradicts item $(1)$ of the statement.
 
 Now we prove $(2)\Rightarrow (1)$. We can assume that the block $\{D_1,D_2\}$ is not $q_X$-exceptional, as otherwise, its Gram matrix would be negative-definite, and so $q_X(D_1,D_2)=0$ by hypothesis, and we are only interested in couples of $q_X$-exceptional prime divisors intersecting properly.  By our assumption, there exists a couple $(x,y)$ of real numbers of the same sign satisfying
 \begin{equation}\label{eqnwc6}
 x^2q_X(D_1)+y^2q_X(D_2)+2xyq_X(D_1,D_2)\geq 0.
 \end{equation}
 Indeed, if such a couple were made of numbers with different signs, then the quantity (\ref{eqnwc6}) would be strictly negative, which is not possible by our assumption.
 As $\sqrt{q_X(D_1)q_X(D_2)} \neq 0$, the above inequality implies
 \begin{align}\label{eqnwc7}
 \begin{split}
  q_X(D_1,D_2)\geq\frac{\left[x\sqrt{-q_X(D_1)}\right]^2+\left[y\sqrt{-q_X(D_2)}\right]^2}{2xy\sqrt{q_X(D_1)q_X(D_2)}} & \cdot \sqrt{q_X(D_1)q_X(D_2)}\geq \cdots \\ 
  & \cdots \geq \sqrt{q_X(D_1)q_X(D_2)},
  \end{split}
 \end{align}
 and we are done.
 The implications $(3)\Rightarrow (2)$ and $(4)\Rightarrow (2)$ are trivial, so we prove $(2)\Rightarrow (3)$. Let $S$ be a $q_X$-exceptional block. If $S\cup \{E'\}$ is also $q_X$-exceptional for some $E' \in \mathrm{Neg}(X) \setminus S$, and $E \in S$, then the block $\{E,E'\}$ is $q_X$-exceptional. By hypothesis we obtain $q_X(E,E')=0$.
 
 To conclude, we prove $(2)\Rightarrow (4)$. Let $S$ be a $q_X$-exceptional block. If $D_1,D_2$ belongs to $S$, then also the block $\{D_1,D_2\}$ is $q_X$-exceptional, and by hypothesis we obtain $q_X(D_1,D_2)=0$. 
 \end{proof}
 \end{lem}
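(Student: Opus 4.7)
The plan is to pivot everything through condition (2), which is the most explicit. Since (1) only concerns pairs of prime divisors and (2) also only concerns pairs, I would first tackle the equivalence (1)$\Leftrightarrow$(2) using two-by-two linear algebra, and then derive (2)$\Leftrightarrow$(3) and (2)$\Leftrightarrow$(4) as essentially formal consequences of the fact that any principal submatrix of a negative-definite symmetric matrix is itself negative-definite.

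For (1)$\Leftrightarrow$(2), the key observation is the following: for distinct $q_X$-exceptional primes $D_1,D_2$, the Gram matrix
\[
G=\begin{pmatrix} q_X(D_1) & q_X(D_1,D_2) \\ q_X(D_1,D_2) & q_X(D_2)\end{pmatrix}
\]
has negative diagonal entries, so by Sylvester's criterion it is negative-definite if and only if $\det(G)=q_X(D_1)q_X(D_2)-q_X(D_1,D_2)^2>0$. Since $q_X(D_1,D_2)\geq 0$ (Proposition \ref{intersproduct}), this determinant condition rewrites as $q_X(D_1,D_2)<\sqrt{q_X(D_1)q_X(D_2)}$. In other words, $\{D_1,D_2\}$ fails to be a $q_X$-exceptional block precisely when $q_X(D_1,D_2)\geq\sqrt{q_X(D_1)q_X(D_2)}$. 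I would then read (1) as: whenever $q_X(D_1,D_2)>0$, the block $\{D_1,D_2\}$ is not $q_X$-exceptional, which is just the contrapositive of (2) using that distinct prime divisors intersect nonnegatively.

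For the remaining equivalences, (2)$\Leftrightarrow$(4) follows from the fact that for any subset $T\subseteq S$ of a $q_X$-exceptional block, the Gram matrix of $T$ is a principal submatrix of that of $S$, and principal submatrices of negative-definite matrices are negative-definite. Thus if $S$ is a $q_X$-exceptional block and $D_1,D_2\in S$ are distinct, then $\{D_1,D_2\}$ is a $q_X$-exceptional block, so (2) forces $q_X(D_1,D_2)=0$; this gives (2)$\Rightarrow$(4), while (4)$\Rightarrow$(2) is immediate by specializing to $S=\{D_1,D_2\}$. The equivalence (2)$\Leftrightarrow$(3) goes the same way: for (3)$\Rightarrow$(2), specialize to $S=\{E'\}$; for (2)$\Rightarrow$(3), given $S$ and $E$ as in (3), the sub-block $\{E,E'\}\subseteq S\cup\{E\}$ is $q_X$-exceptional for every $E'\in S$, so (2) applied to this pair yields $q_X(E,E')=0$.

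The only place where care is needed is the algebraic manipulation in (1)$\Leftrightarrow$(2), where one must verify that the inequality $q_X(D_1,D_2)<\sqrt{q_X(D_1)q_X(D_2)}$ is actually the correct rewriting of $\det(G)>0$ taking signs into account (both $q_X(D_i)$ are negative, so $q_X(D_1)q_X(D_2)>0$ and the square root is well-defined). I do not anticipate a genuine obstacle: once the $2\times 2$ determinant formula is on the table, all four conditions collapse to the single statement that within any $q_X$-exceptional block the off-diagonal entries of the Gram matrix vanish.
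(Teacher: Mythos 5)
Your proof is correct, and for the back-and-forth between (2), (3), and (4) it is essentially identical to the paper's: everything rests on the fact that a principal submatrix of a negative-definite Gram matrix is negative-definite, and specializing to one- and two-element subsets. Where you diverge slightly is in the $(1)\Leftrightarrow(2)$ equivalence: the paper establishes both implications by expanding the quadratic form $x^2q_X(D_1)+y^2q_X(D_2)+2xyq_X(D_1,D_2)$ and plugging in the test vector $(x,y)=(\sqrt{-q_X(D_1)},\sqrt{-q_X(D_2)})$, running separate arguments by contrapositive for each direction. You instead invoke Sylvester's criterion to characterize negative-definiteness of the $2\times 2$ Gram matrix by the single inequality $\det(G)>0$, then observe that once $q_X(D_1,D_2)\geq 0$ is granted by Proposition \ref{intersproduct}, this rewrites as $q_X(D_1,D_2)<\sqrt{q_X(D_1)q_X(D_2)}$, and so (1) and (2) are literally contrapositives of each other. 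This is the same underlying mathematics (Sylvester's criterion is just the quadratic-form check in disguise), but your packaging is tighter: it collapses two one-directional arguments into a single biconditional and makes transparent that the whole lemma reduces to identifying the off-diagonal vanishing condition inside a $q_X$-exceptional block. One small point of care you gesture at and handle correctly: the statement of (2), like the paper's, tacitly takes $D_1\neq D_2$ (otherwise $q_X(D_1,D_1)<0$), which is what makes both $\{D_1,D_2\}$ a genuine two-element block and the specialization $S=\{E'\}$, $E=D_2$ in $(3)\Rightarrow(2)$ legitimate.
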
 
We are now ready to prove Theorem \ref{mainthm3}, which tells us when the Boucksom-Zariski chambers are numerically determined.
\medskip
  \begin{proof3}
  We first prove $(2)\Rightarrow (1)$. By Lemma \ref{bzcnumericallydetermined0}, item $(2)$ of Theorem \ref{mainthm3} is equivalent to item $(3)$ and $(4)$ of Lemma \ref{bzcnumericallydetermined0}. By Proposition \ref{wcinbzc} and \ref{bzcinwc} we obtain that $W_S\subset BZ_S$ and $\mathrm{int}\left(BZ\right) \subset{W_S}$ for any $q_X$-exceptional block $S$, so that $W_S=\mathrm{int}\left(BZ_S\right)$, and we are done. We now prove $(1)\Rightarrow (2)$ in Theorem \ref{mainthm3}. As by hypothesis $W_S=\mathrm{int}\left(BZ_S\right)$ for any $q_X$-exceptional block $S$,  by Proposition \ref{wcinbzc} we obtain that item $(3)$ of Lemma \ref{bzcnumericallydetermined0} holds true, and, by Lemma \ref{bzcnumericallydetermined0}, this is equivalent to item $(2)$ of Theorem \ref{mainthm3}. This concludes the proof.  
  \end{proof3}
 
  The following criterion, explaining when a Boucksom-Zariski chamber intersects a given simple Weyl chamber, is a generalization of Theorem 3.6 in \cite{Hanumanthu}.
 \begin{prop}\label{intersectionwcbzc}
 Let $S,S'$ be two $q_X$-exceptional blocks. Then $W_{S'} \cap BZ_{S} \neq \emptyset$ if and only if $S' \subseteq S$ and each subset $T \subseteq  S \setminus S'$ satisfies the following property: there exist two divisors, one in $T$ and one in $S\setminus T$, intersecting properly (with respect to $q_X$).
 \begin{proof}
 We first prove $\Rightarrow$. Let $D \in W_{S'} \cap BZ_S$ with divisorial Zariski decomposition $D=P(D)+N(D)$. Then for every $D' \in S'$ we have $q_X(D,D')<0$. Moreover $q_X(N(D),D')<0$, because $P(D)$ is $q_X$-nef, thus $D'$ is an irreducible component of $N(D)$. But $S$ coincides with the set of irreducible components of $N(D)$, hence $S' \subseteq S$. Now, suppose that there exists a subset $T \subseteq S \setminus S'$ not satisfying the condition in the statement. This means that $q_X(E,E')=0$ for any $E \in T$ and $E' \in S\setminus T$ (because $q_X$ is an intersection product). We can write
 \[
 D=P(D)+N(D)=P(D)+\sum_{E_i \in T}a_i E_i +\sum_{E_j \in S \setminus T} b_jE_j.
 \]
 Since $D \in W_{S'}$, and by our assumption, we have
 \[
 q_X(D,E_j)= q_X\left(\sum_{E_i \in T}a_i E_i, E_j\right)>0,
 \] for any $E_j \in T \subset S \setminus S'$. The intersection matrix of $T$ is negative-definite, hence, by Lemma 4.1 of \cite{Bau}, the entries of $T^{-1}$ are non-positive. It follows that the $a_i$ appearing in $\sum_{E_i \in T}a_i E_i$ are negative, and this contradicts the effectivity of $N(D)$. Hence $\Rightarrow$ is proved.
 
 We now prove $\Leftarrow$. First, we show that there exists an element $D' \in W_{S'}\cap BZ_{S'}$.
 We construct $D'$ explicitly and of the form
 \begin{equation}\label{definitionD'}
 D'=M+\sum_{D'_i \in S'}\lambda_iD'_i+\sum_{D'_i\in S'}(\mu_i-\lambda_i)D'_i,
 \end{equation}
 where $M$ is any divisor lying in $\mathrm{int}\left(\mathrm{Mov}(X)\right)$, and satisfying $P(D')=M+\sum_{D'_i \in S'}\lambda_iD'_i$, $N(D')=\sum_{D'_i\in S'}(\mu_i-\lambda_i)D'_i$.
 Arguing as in Lemma \ref{lemexcepblock} we can find the $\lambda_i>0$ in such a way that $\mathrm{Null}_{q_X}(P(D'))=S'$, so that $P(D')$ is $q_X$-nef. Now, let $\{y_i<0 \; | \; 1\leq i \leq \mathrm{card}\left(S'\right)\}$ be a set of negative real numbers, and consider the linear system of equations defined by
 \[
 \sum_{D'_i \in S'}q_X(D'_i,D'_j)x_i=y_i, \; \mathrm{for \; any \; D'_j \in S'}.
 \]
 Again, arguing as in Lemma \ref{lemexcepblock}, we obtain a unique solution for this linear system, made up of positive real numbers. Using the variables $x_i$ to indicate the solution of the linear system above, we define $\mu_i:=\lambda_i+x_i$. Defining $D'$ as in (\ref{definitionD'}), with the $\lambda_i$ and the $\mu_i$ we introduced so far, clearly it holds $D' \in W_{S'}\cap BZ_{S'}$. Now, if $S=S'$ we are done, otherwise $S' \subsetneq S$, and we can consider the set
 \[
 S'':=\{E''\in S \setminus S' \; | \; q_X(E'',E')>0 \mathrm{\; for\; some\; }E'\in S' \}.
 \]
  We note that $S''$ is not empty by hypothesis. We describe how to construct a divisor $D''$ belonging to $W_{S'}\cap BZ_{S' \cup S''}$. We claim that $D''$ can be explicitly constructed of the form
  \[
  D''=M+\sum_{D''_i \in S'}\alpha_iD''_i+\sum_{D''_i \in S''}\beta_iD''_i+nN(D')+\sum_{D_i'' \in S''}D''_i,
  \]
  where 
  \begin{itemize}
  \item $n$ is any positive natural number, and the $\alpha_i, \beta_i$ are positive real numbers,
  \item $M \in \mathrm{int}\left(\mathrm{Mov}(X)\right)$, 
  \item $P(D'')=M+\sum_{D''_i \in S'}\alpha_iD''_i+\sum_{D''_i \in S''}\beta_iD''_i$, 
  \item $N(D'')=nN(D')+\sum_{D_i'' \in S''}D''_i$.
  \end{itemize}
  Indeed, arguing as in Lemma \ref{lemexcepblock}, we can construct $D''$ having the listed properties. Now, if $E\in S'$, then 
  \[
  q_X(D'',E)=nq_X(N(D'),E)+q_X\left(\sum_{D''_i\in S''}D''_i,E\right).
  \] 
  As $D' \in W_{S'}$, we have $q_X(N(D'),E)<0$, and choosing $n$ large enough we will have $q_X(D'',E)<0$ for any $E \in S'$. To conclude, let $E$ be any $q_X$-exceptional prime divisor not belonging to $S'$. If $E \notin S''$, then $q_X(D'',E)>0$, by construction of $D''$. If $E \in S''$, by definition of $S''$ and by assumption, there exists an element $E'$ in $S'$ such that $q_X(E,E')>0$. Now, 
  \[
  q_X(D'',E)=nq_X(N(D'),E)+q_X\left(\sum_{D''_i\in S''}D''_i,E\right),
  \] 
  all the terms of $q_X(N(D'),E)$ are non-negative and at least one of them is positive. Thus, increasing again $n$, if necessary, we obtain $q_X(D'',E)>0$ for any $E \in \mathrm{Neg}(X)\setminus S'$. We have thus constructed $D'' \in W_{S'}\cap BZ_{S' \cup S''}$.
If $S=S'\cup S''$ we are done. Otherwise, we can repeat the above argument. Proceeding like this, since $S$ is finite, we will end up with the desired divisor $D \in W_{S'} \cap BZ_S$.   
 \end{proof}
 \end{prop}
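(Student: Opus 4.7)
The plan is to treat the two implications separately.

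For the forward direction, I would pick $D \in W_{S'} \cap BZ_S$ with Boucksom-Zariski decomposition $D = P(D) + N(D)$. To show $S' \subseteq S$, the definition of $W_{S'}$ gives $q_X(D, D') < 0$ for each $D' \in S'$, while $P(D)$ being $q_X$-nef forces $q_X(N(D), D') < 0$; since $q_X$ is an intersection product, $D'$ must appear among the irreducible components of $N(D)$, which are by definition exactly the elements of $S$. To obtain the subset property, I would argue by contradiction: if some $T \subseteq S \setminus S'$ violated it, then $q_X(E, E') = 0$ for every $E \in T$ and $E' \in S \setminus T$. Splitting $N(D) = \sum_{E_i \in T} a_i E_i + \sum_{E_k \in S \setminus T} b_k E_k$ and using both the $q_X$-orthogonality of $P(D)$ with $S$ and the fact that $D \in W_{S'}$, one obtains $q_X\bigl(\sum_i a_i E_i, E_j\bigr) > 0$ for each $E_j \in T$. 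This is a linear system with the negative-definite Gram matrix of $T$; by Lemma 4.1 of \cite{Bau}, its inverse has non-positive entries, forcing the $a_i$ to be non-positive and contradicting the effectivity of $N(D)$.

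For the backward direction, the plan is to construct an element of $W_{S'} \cap BZ_S$ via an inductive enlargement of the null locus. First I would produce $D' \in W_{S'} \cap BZ_{S'}$ of the form $M + \sum_{D'_i \in S'}\lambda_i D'_i + \sum_{D'_i\in S'}(\mu_i - \lambda_i)D'_i$, with $M \in \mathrm{int}(\mathrm{Mov}(X))$: the $\lambda_i > 0$ are given by the construction of Lemma \ref{lemexcepblock} (so that $P(D')$ is $q_X$-nef with null locus exactly $S'$), and the positive increments $\mu_i - \lambda_i$ are obtained by solving a second linear system whose matrix is the Gram matrix of $S'$. Choosing a negative right-hand side and applying Lemma 4.1 of \cite{Bau} converts it into a positive solution, thereby ensuring $q_X(D', E) < 0$ for every $E \in S'$. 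If $S = S'$ we are done; otherwise I would set $S'' := \{E'' \in S \setminus S' \mid q_X(E'', E') > 0 \text{ for some } E' \in S'\}$, which by the hypothesis applied to $T = S \setminus S'$ is nonempty, and then build $D'' \in W_{S'} \cap BZ_{S' \cup S''}$ in the form $P(D'') + n N(D') + \sum_{D''_i \in S''} D''_i$ for $n$ sufficiently large, where $P(D'')$ is obtained again through Lemma \ref{lemexcepblock} applied to the block $S' \cup S''$. A large $n$ guarantees $q_X(D'', E) < 0$ for $E \in S'$ (the term $n\, q_X(N(D'), E)$ dominates), while for $E \in S''$ the existence of some $E' \in S'$ with $q_X(E, E') > 0$ gives a positive contribution to $q_X(nN(D'), E)$ which, after choosing $n$ even larger, dominates all remaining terms.

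The main obstacle will be iterating this step: after producing $D''$, one repeats the construction with $S' \cup S''$ playing the role of the current null locus, defining the new analogue of $S''$ relative to what remains of $S \setminus S'$, and so on. The crucial observation is that at each stage, applying the hypothesis to the subset $T$ of $S \setminus S'$ not yet absorbed into the null locus guarantees that the next analogue of $S''$ is nonempty, so the null locus strictly enlarges until it exhausts $S$; since $S$ is finite, finitely many steps suffice. The most delicate point is verifying at every stage that the constructed divisor is simultaneously big, that its Boucksom-Zariski decomposition has the prescribed form, and that the strict inequalities defining $W_{S'}$ continue to hold despite the growing positive part — all three will follow from careful bookkeeping of the linear-system parameters together with iterated use of Lemma 4.1 of \cite{Bau}.
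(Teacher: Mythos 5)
Your proposal is correct and follows the same route as the paper. In the forward direction you use the identical contradiction argument (decompose $N(D)$ over $T$ and $S\setminus T$, observe that $q_X$-orthogonality of $P(D)$ with $S$ and membership in $W_{S'}$ force a strictly positive right-hand side, then apply Lemma 4.1 of \cite{Bau} to the negative-definite Gram matrix of $T$ to make the $T$-coefficients nonpositive); in the backward direction you carry out the same inductive enlargement of the null locus (first $D'\in W_{S'}\cap BZ_{S'}$ via the linear system of Lemma \ref{lemexcepblock}, then $D''\in W_{S'}\cap BZ_{S'\cup S''}$ with an $n\gg 0$ multiplying the old negative part, etc.), and you correctly note that the hypothesis applied to the subset of $S\setminus S'$ not yet absorbed guarantees a strict enlargement at each step.
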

 
 \begin{rmk}
 By the proof of $\Leftarrow$ in Proposition \ref{intersectionwcbzc}, we obtain that $W_S \cap BZ_S \neq \emptyset$, for any $q_X$-exceptional block $S$.
 \end{rmk}
 
  \begin{ack}
This work will be part of my PhD thesis. I would like to express my gratitude to my supervisor, Prof. Gianluca Pacienza, for following me constantly, for sharing with me his knowledge, for his suggestions and hints, and for introducing me to this beautiful part of Algebraic Geometry. Also, I would like to thank Prof. Giovanni Mongardi, for the fruitful conversations and his suggestions and hints. 
\end{ack}

\printbibliography
\end{document}